\pgfplotsset{compat=1.16}
\pgfplotsset{trig format plots=rad}
\date{\today}
\let\div\relax
\DeclareMathOperator{\div}{div}
\DeclareMathOperator{\dive}{div}
\DeclareMathOperator{\dist}{dist}
\let\d\relax
\newcommand{\d}{\partial}
\newcommand{\cd}{\mathfrak{c} \partial}
\newcommand{\eps}{\varepsilon}
\newcommand{\lam}{\lambda}
\newcommand{\R}{\mathbb{R}}
\newcommand{\N}{\mathbb{N}}
\newcommand{\Z}{\mathbb{Z}}
\newcommand{\T}{\mathbb{T}}
\newcommand{\G}{\mathbf{G}}
\newcommand{\fhi}{\varphi}
\newcommand{\ve}{\boldsymbol{v}}
\newcommand{\1}{\mathds{1}}
\renewcommand{\L}{\mathscr{L}}
\newcommand{\Lip}{\operatorname{\mathrm{Lip}}}
\newcommand{\reg}{\operatorname{\mathsf{reg}}}
\newcommand{\meas}[1]{\L^d\left(#1\right)}
\renewcommand{\H}{\mathscr{H}}
\DeclareMathOperator{\rest}{\llcorner}
\DeclareMathOperator{\BV}{BV}
\DeclareMathOperator{\TV}{TV}
\DeclareMathOperator{\FV}{FV}
\DeclareMathOperator{\Tan}{{\mathsf {Tan}}}
\newcommand{\sat}{\operatorname{sat}}
\newcommand{\diam}{\operatorname{diam}}
\newcommand{\loc}{\text{\rm loc}}
\renewcommand{\liminf}{\mathop{\underline{\mathrm{lim}}}\limits}
\newcommand{\dd}{\mathrm{d}}
\def\Xint#1{\mathchoice
   {\XXint\displaystyle\textstyle{#1}}%
   {\XXint\textstyle\scriptstyle{#1}}%
   {\XXint\scriptstyle\scriptscriptstyle{#1}}%
   {\XXint\scriptscriptstyle\scriptscriptstyle{#1}}%
   \!\int}
\def\XXint#1#2#3{{\setbox0=\hbox{$#1{#2#3}{\int}$}
     \vcenter{\hbox{$#2#3$}}\kern-.5\wd0}}
\def\dashint{\Xint-}
\newtheorem{lemma}{Lemma}[section]
\newtheorem{prop}[lemma]{Proposition}
\newtheorem{theorem}[lemma]{Theorem}
\newtheorem{conj}[lemma]{Conjecture}
\newtheorem{cor}[lemma]{Corollary}
\newtheorem{defn}[lemma]{Definition}
\theoremstyle{definition}
\newtheorem{rem}[lemma]{Remark}
\title[]{The Nelson conjecture and chain rule property}
\author{Nikolay A. Gusev}
\address[N.G.]{Moscow Institute of Physics and Technology,
	9 Institutskiy per., Dolgoprudny, Moscow Region, Russia 141700
}
\email{ngusev@phystech.su}
\email{ngusev@phystech.edu}
\email{n.a.gusev@gmail.com}
\author{Mikhail V. Korobkov}
\address[M.K.]{School of Mathematical Sciences,
Fudan University,
Shanghai 200433,
People’s Republic of China}
\address[M.K.]{Sobolev Institute of Mathematics, Acad. Koptyug pr. 4, Novosibirsk, Russia.}
\email{korob@math.nsc.ru}
\begin{document}
\begin{abstract}
Let $p\ge 1$ and let $\ve \colon \mathbb R^d \to \mathbb R^d$ be a compactly supported vector field
with $\ve \in L^p(\mathbb R^d)$ and $\operatorname{div} \ve = 0$ (in the sense of distributions).
It was conjectured by Nelson that if $p=2$, then the operator $\mathsf{A}(\rho) := \ve \cdot \nabla \rho$
with the domain $D(\mathsf{A})=C_0^\infty(\R^d)$ is essentially skew-adjoint on $L^2(\R^d)$.
A~counterexample to this conjecture for $d\ge 3$ was constructed by Aizenmann.
From recent results of Alberti, Bianchini, Crippa and Panov it follows that this conjecture is false even for $d=2$.

Nevertheless, we prove that for $d=2$ the condition $p\ge 2$ is necessary and sufficient
for the following \emph{chain rule property of $\ve$}:
for any $\rho \in L^\infty(\mathbb R^2)$ and any $\beta\in C^1(\mathbb R)$
the equality $\dive(\rho \ve) = 0$ implies that $\dive(\beta(\rho) \ve) = 0$.

Furthermore, for $d=2$ we prove that $\ve$ has the renormalization property if and only if
the stream function (Hamiltonian) of $\ve$ has the weak Sard property,
and that both of the properties are equivalent to uniqueness
of bounded weak solutions to the Cauchy problem for the corresponding continuity equation.
These results generalize the criteria established for $d=2$ and $p=\infty$ by Alberti, Bianchini and Crippa.
\end{abstract}

\maketitle

\tableofcontents

\section{Introduction}

Let $\ve \colon \R^d \to \R^d$ be a compactly supported vector field which belongs to $L^p(\R^d),$ where $p\in[1,\infty].$
Suppose that 
\begin{equation}
\dive \ve =0 \label{div(v)=0}
\end{equation}
(in the sense of distributions). Let $q\in[1,\infty]$.
We would like to investigate whether the vector field $\ve$
has the following \emph{chain rule property (CRP$_q$):}
for any $\rho \in L^q(\mathbb R^d)$ and
for any $\beta\in C^1(\mathbb R)$ such that $\rho\,\ve$ and $\beta(\rho) \ve$ are integrable,
the equality
\begin{equation}
\dive(\rho \ve) = 0 \label{div(rho v)=0}
\end{equation}
implies that
\begin{equation}
\dive(\beta(\rho) \ve) = 0. \label{CRP}
\end{equation}
(Equations \eqref{div(rho v)=0} and \eqref{CRP} are understood in the sense of distributions.) If $\rho\in C^1(\R^d)$, then this implication is immediate: 
$\dive(\rho \ve) = \ve \cdot \nabla \rho$ and
$\dive(\beta(\rho) \ve) = \ve \cdot \beta'(\rho) \nabla \rho = \beta'(\rho) \dive(\rho \ve) = 0$ by \eqref{div(rho v)=0}.
Let us also mention that CRP$_q$ with $q\in[1,\infty)$ always implies CRP$_\infty$, since $\ve$ is compactly supported.

The chain rule property can be naturally considered as
a steady version of the renormalization property, which was introduced by DiPerna and Lions in connection with the problem of uniqueness of weak solutions to the initial value problem for the continuity equation.
Let us recall that $\ve$ has the renormalization property (RP$_q$) if for any $T>0$ and $\rho_0 \in L^q(\R^d)$ any weak solution $\rho\in L^\infty(0,T; L^q(\R^d))$
of
\begin{subequations}
\begin{equation}
\d_t \rho + \dive (\rho \ve) = 0,
\label{continuity-equation}
\end{equation}
\begin{equation}
\rho|_{t=0} = \rho_0
\label{continuity-equation-initial-condition}
\end{equation}
\end{subequations}
for any $\beta\in C^1(\R)$ satisfies
\begin{subequations}
\begin{equation}
\d_t \beta(\rho) + \dive (\beta(\rho) \ve) = 0,
\end{equation}
\begin{equation}
\beta(\rho)|_{t=0} = \beta(\rho_0),
\end{equation}
\end{subequations}
provided that all the terms in the weak formulation of the equations above are integrable.
If $1/p + 1/q \le 1$, then RP$_q$ implies both CRP$_q$ and uniqueness of weak solutions to \eqref{continuity-equation}--\eqref{continuity-equation-initial-condition}.
In \cite{BouchutCrippa2006} it was proved that for bounded, but possibly non-autonomous divergence-free vector fields $\ve$ the RP$_2$ is equivalent to uniqueness of weak solutions to \emph{both} forward and backward Cauchy problems for \eqref{continuity-equation}.

It follows from the well-known results of DiPerna and Lions \cite{DiPernaLions1989} that
if $1/p + 1/q \le 1$ and $\ve \in W^{1,p}(\R^d)$, then $\ve$ has the renormalization property RP$_q$
and consequently the CRP$_q$ holds.
In fact, the renormalization property holds also for non-stationary vector fields
in the class $\ve \in L^1(0,T;W^{1,p}(\R^d))$.
Later the results of DiPerna and Lions were generalized for certain classes of vector fields with bounded variation, see, e.g., \cite{Ambrosio2004,ADLM07, BG16, BianchiniBonicatto2019}.
On the other hand, if $p$ and $q$ are too low (or when $\rho$ is not bounded in time),
then RP$_q$ may fail for non-stationary vector fields:
such counterexamples were constructed in \cite{ModenaSzekelyhidi2019}, \cite{ModenaBuck2023}
and \cite{CheskidovLuo2024} using the convex integration method.

In the present work we do not assume any weak differentiability of $\ve$
and focus on the case
\begin{equation*}
 q=\infty
\end{equation*}
which is important, in particular, in connection with the theory of regular Lagrangian flows \cite{Ambrosio2004, DeLellis07, AmbrosioCrippa2008}.
In fact the difficult problem of existence and uniqueness of flows
under such minimal regularity assumptions was studied already by Nelson~\cite{Nelson_1963}.
In the case $p=2$ he considered on $L^2(\R^d)$ the unbounded operator
\begin{equation}
\mathsf{A}(\rho) = \ve \cdot \nabla \rho
\label{oper-A}
\end{equation}
with the domain $D(\mathsf{A}) = C^\infty_0(\R^d)$. Nelson formulated the following
conjecture (cf. \cite[p. 288]{Aizenman_1978}):
\begin{conj}[Nelson]\label{NC}
If $\ve\in L^2(\R^d)$ is compactly supported and divergence-free, then $\mathsf{A}$ is essentially skew adjoint, i.e., 
${\mathsf A}^*=-\overline{\mathsf A}$.
\emph{(Here $\overline{\mathsf A}$ denotes  the closure of the operator $\mathsf A$.)}
\end{conj}

As we will see shortly, the skew-adjointness of the operator ${\mathsf A}$ turns out to be closely related to the renormalization property. Originally the Conjecture~\ref{NC} was based on the following two intuitive observations:

\begin{itemize}
\item[(a)] for $p<2$ Nelson constructed a simple example of vector field $\ve\in L^p(\R^2)$ for which there exist infinitely many measure-preserving flows. But if $\mathsf A$ was essentially skew-adjoint, then such flow would be unique;
\item[(b)] for $p= 2$ under the additional assumption that $\ve$ is Lipschitz on the complement of a closed set of capacity zero Nelson proved that the operator $\mathsf{A}$ is indeed essentially skew adjoint.
\end{itemize}

Later it turned out that the integrability condition is too weak, and in the general case, without imposing additional restrictions (on differentiability, etc.), Nelson's Conjecture~\ref{NC} fails. For $d\ge 3$ corresponding counterexamples were constructed by Aizenman~\cite{Aizenman_1978}.
For $d=2$ we have not managed to find such counterexamples in the literature, and evidently the planar case is more difficult and requires new ideas, since the construction of Aizenman is essentially 3-dimensional. 
As we said before, the skew-adjointness of the operator ${\mathsf A}$ is closely related to the~renormalization property: 
indeed, for autonomous bounded divergence-free vector fields $\ve$ Panov proved \cite{Panov2015, Panov2018} that (for all $d\in \N$) the following claims are equivalent:
\begin{enumerate}
 \item $\ve$ has the renormalization property RP$_2$;
  \item operator $\mathsf{A}$ is essentially skew adjoint;
 \item the~uniqueness of weak solutions to \eqref{continuity-equation} in the class $L^\infty(0,T; L^2(\R^d))$ holds to both forward and backward Cauchy problems.
\end{enumerate}
(The equivalence of the last two properties was extended in~\cite{Panov2025} for generic skew-symmetric operator $\mathsf{A}$ on some Hilbert space.) 
By this Panov's result, in order to refute the Nelson's conjecture for $d=2$, it is sufficient to construct an autonomous bounded compactly supported divergence-free vector field on the plane, for which the RP$_2$ fails.
In this regard, in \cite{ABC_2013_LipSard,ABC_2014} Alberti, Bianchini and Crippa recently established an elegant geometric criterion of uniqueness of bounded weak solutions to \eqref{continuity-equation}--\eqref{continuity-equation-initial-condition} with bounded divergence-free vector field on the plane. They also constructed examples of such vector fields, for which uniqueness and even RP$_\infty$ fail.
This implies that Nelson's conjecture fails even for $d=2$. To make the long story short, we can summarize the above facts in the following proposition:
\begin{prop}\label{main-I}
	For any natural $d\ge 2$ there exists a compactly supported divergence-free vector field $\ve \in L^\infty(\R^d)$ 	such that the corresponding operator \eqref{oper-A} is not essentially skew adjoint on $L^2(\R^d)$.
\end{prop}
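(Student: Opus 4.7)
My plan is to derive the proposition directly from two known inputs: Panov's equivalence theorem relating essential skew-adjointness to the renormalization property, and pre-existing counterexamples to RP constructed by Aizenmann ($d\ge 3$) and by Alberti-Bianchini-Crippa-Panov ($d=2$). By Panov's result \cite{Panov2015, Panov2018}, for autonomous bounded divergence-free $\ve$ the operator $\mathsf A$ defined in \eqref{oper-A} is essentially skew-adjoint on $L^2(\R^d)$ if and only if the renormalization property holds for \eqref{continuity-equation} in the class $\rho\in L^2([0,+\infty)\times\R^d)$. So it suffices to exhibit, for each $d\ge 2$, a compactly supported $\ve\in L^\infty(\R^d)$ with $\dive\ve=0$ for which this RP fails.

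For $d\ge 3$ I would simply appeal to Aizenmann's original construction, which already produces such a field together with an explicit non-renormalized solution. For $d=2$ I would invoke the construction underlying the Alberti-Bianchini-Crippa-Panov disproof of the Nelson conjecture: one starts from a compactly supported Lipschitz stream function $H\colon\R^2\to\R$ violating the weak Sard property, sets $\ve:=\nabla^\perp H$, and obtains from the same construction a bounded weak solution $\rho$ of \eqref{continuity-equation} that is not renormalized. This yields a compactly supported bounded divergence-free planar vector field to which Panov's equivalence can then be applied.

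The only item requiring care is to check that this counterexample fits into Panov's $L^2$ framework rather than merely the $L^\infty$ framework in which it is naturally stated. Here I would exploit the compact support of $\ve$: by choosing the initial datum compactly supported and truncating outside $\operatorname{supp}\ve$, the resulting $\rho$ (and $\beta(\rho)$) remains supported in a fixed ball for all times, hence belongs to $L^\infty([0,T];L^\infty(\R^2)\cap L^2(\R^2))$ for any $T$. This places the non-renormalized pair $(\rho,\beta(\rho))$ inside the class of Panov's theorem and transfers the failure of RP to his $L^2$ setting.

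I expect no substantive obstacle beyond this bookkeeping step. The hard analytic work (construction of the stream function without weak Sard property, and of the associated non-renormalized solution) is carried out in the cited references, and Panov's equivalence then provides a ready-made bridge from the failure of RP to the failure of essential skew-adjointness of $\mathsf A$.
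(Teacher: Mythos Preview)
Your proposal is correct and follows essentially the same route as the paper: for $d\ge 3$ defer to Aizenman, and for $d=2$ take a compactly supported Lipschitz stream function without the weak Sard property, invoke the Alberti--Bianchini--Crippa nonuniqueness/non-renormalization result to produce a nontrivial bounded solution with zero initial data, observe that compact support of $\ve$ forces $\rho(t,\cdot)$ to be uniformly compactly supported (hence in $L^2$), and then apply Panov's equivalence to conclude that $\mathsf A$ is not essentially skew-adjoint. The only cosmetic difference is that the paper phrases the $L^2$ step via the zero initial datum rather than an explicit truncation, but the content is identical.
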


(A more detailed proof of Proposition~\ref{main-I} is given on p.~\pageref{Nelson-conj-2d}.) Despite the fact that Nelson's Conjecture was disproved in any dimension, its general intuition still seems to be interesting and fruitful.
Since any solution $\rho$ to \eqref{div(rho v)=0}
is a stationary solution of \eqref{continuity-equation},
the chain rule property can be considered as a weakened version of the renormalization property.
Since, as we already mentioned above, the essential skew adjointness of the operator~$\mathsf{A}$ implies
the renormalization property, this leads us to the following

\begin{conj}[{\bf Weakened Nelson's Conjecture}]\label{NC-w}
If $\ve\in L^2(\R^d)$, then CRP$_\infty$ holds.
\end{conj}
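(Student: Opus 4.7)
The plan is to exploit the planar structure via stream functions, reducing the chain rule property to a factorization statement for Sobolev functions. Since $\ve \in L^2(\R^2)$ is compactly supported and divergence-free, there exists a stream function $f \in W^{1,2}_{\loc}(\R^2)$ with $\ve = \nabla^\perp f$. Given any $\rho \in L^\infty(\R^2)$ satisfying $\dive(\rho \ve) = 0$, the vector field $\rho \ve \in L^2(\R^2)$ is again divergence-free and therefore admits its own stream function $g \in W^{1,2}_{\loc}(\R^2)$ with $\rho \ve = \nabla^\perp g$. Equating the two representations yields the pointwise identity
\begin{equation*}
\nabla g = \rho\, \nabla f \quad \text{a.e.\ in } \R^2;
\end{equation*}
in particular $\nabla g \parallel \nabla f$ almost everywhere, and $\nabla g$ vanishes on $\{\nabla f = 0\}$.

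The heart of the argument would be the structural claim that $g$ factors through $f$: there exists a Lipschitz map $G \colon \R \to \R$ with $\|G'\|_\infty \le \|\rho\|_\infty$ such that
\begin{equation*}
g = G \circ f \text{ a.e.\ in } \R^2, \qquad \rho = G'(f) \text{ a.e.\ on } \{\nabla f \ne 0\}.
\end{equation*}
Heuristically, $\nabla g \parallel \nabla f$ forces $g$ to be constant along the level curves of $f$. I would make this rigorous by combining the coarea formula for Sobolev functions (which gives, for a.e.\ $t \in \R$, that $f^{-1}(t)$ is $\H^1$-rectifiable of finite length) with a disintegration of $\L^2 \rest \{\nabla f \ne 0\}$ along these level sets, and using the slice-wise consequence of $\nabla g = \rho\, \nabla f$ to deduce that $g$ is constant on each connected component of $f^{-1}(t)$ for a.e.\ $t$. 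The Lipschitz bound on $G$ then follows from $|\nabla g| \le \|\rho\|_\infty |\nabla f|$.

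Once this structural claim is in hand, the chain rule property follows almost formally. Given $\beta \in C^1(\R)$, define
\begin{equation*}
H(s) := \int_0^s \beta(G'(t))\, \dd t,
\end{equation*}
which is Lipschitz since $G'$ is bounded and $\beta$ is continuous on bounded sets. The Sobolev chain rule yields $H \circ f \in W^{1,2}_{\loc}$ with $\nabla(H \circ f) = \beta(G'(f))\, \nabla f$ a.e., both sides vanishing on $\{\nabla f = 0\}$. Combined with $\ve = 0$ a.e.\ on $\{\nabla f = 0\}$ and $\rho = G'(f)$ on its complement, this gives
\begin{equation*}
\beta(\rho)\, \ve = \nabla^\perp (H \circ f) \quad \text{a.e.\ in } \R^2,
\end{equation*}
whose right-hand side is manifestly divergence-free.

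The main obstacle is clearly the structural claim. For Lipschitz stream functions (the setting closer to $p = \infty$), the analogous factorization is equivalent to the weak Sard property of $f$ and therefore may fail; the argument must therefore crucially exploit the finer regularity $f \in W^{1,2}(\R^2)$ beyond what is used in \cite{ABC_2014}. I expect the sharpness of the exponent $p = 2$ to enter precisely here: this assumption is exactly what guarantees that the stream function $g$ of $\rho \ve$ also lies in the critical Sobolev space $W^{1,2}_{\loc}$, so that one has access to a quasicontinuous representative of $g$ and, potentially, to Hardy-space estimates for the distributional Jacobian $\det \nabla(f,g)$ in the spirit of Coifman--Lions--Meyer--Semmes.
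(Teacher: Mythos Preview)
Your structural claim --- that $g$ factors as $G\circ f$ for a single-valued Lipschitz $G$ --- is false in general, and this is a topological obstruction rather than a regularity one. Take $f(x,y)=\psi(x-2,y)+\psi(x+2,y)$ for a smooth nonnegative bump $\psi$ supported in the unit disk, and $\rho=1$ on the right bump, $\rho=-1$ on the left bump. Then $\dive(\rho\ve)=0$ with stream function $g(x,y)=\psi(x-2,y)-\psi(x+2,y)$, but $f(2,0)=f(-2,0)$ while $g(2,0)=-g(-2,0)\ne 0$, so no single-valued $G$ with $g=G\circ f$ exists. Your heuristic ``$\nabla g\parallel\nabla f$ forces $g$ constant along level curves of $f$'' is correct, but constancy along each connected component of $\{f=t\}$ does not yield $g=G(f)$ unless those level sets are connected. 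This has nothing to do with the weak Sard property (which concerns the critical set) and is not cured by passing from Lipschitz to $W^{1,2}$: the counterexample above is $C^\infty_c$.

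The paper's proof confronts exactly this difficulty by first decomposing $f=\sum_i f_i$ into \emph{monotone} components (Theorem~\ref{thm:BT}), for which all nontrivial superlevel sets are indecomposable and the essential level sets are single simple closed curves. The key step (Proposition~\ref{div-decomp}) shows that $\dive(\rho\nabla^\perp f)=0$ splits into $\dive(\rho\nabla^\perp f_i)=0$ for every $i$; this is nontrivial and uses that the $|Df_i|$ are pairwise singular together with Proposition~\ref{mutual weak Sard property}. Then, for each monotone $f_i$, the analogue of your argument does go through (Proposition~\ref{div-monotone-no-rhs}): $\rho$ is constant $\H^1$-a.e.\ on almost every level set of $f_i$, hence so is $\beta(\rho)$, and $\dive(\beta(\rho)\nabla^\perp f_i)=0$. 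Summing over $i$ gives the CRP. In short, the missing idea in your proposal is the reduction to the monotone case; once that is in place, your factorization-through-$f$ picture becomes essentially correct component by component.
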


In order to support Conjecture~\ref{NC-w} one could observe that the CRP$_\infty$ fails for the Nelson's examples only for $L^p(\R^2)$-vector fields with~$p<2$.
However, for $d\ge 3$ even this weakened conjecture fails:
Depauw~\cite{Depauw2003} proved that
for $d\ge 3$ the CRP$_\infty$ may fail even for bounded vector fields!
Furthermore, it was proved in \cite{CGSW2017} that if $d \ge 3$, then for any strongly convex $\beta$
and any nonzero distribution $g\in \mathscr{D}'(\R^d)$ from a sufficiently large class
it is possible to construct bounded~$\ve$ and $\rho$
such that \eqref{div(v)=0}--\eqref{div(rho v)=0} hold and $\div(\beta(\rho) \ve) = g.$
In other words, not only CRP$_\infty$ fails, but it is also possible to prescribe the \emph{defect} $g$
in \eqref{div(rho v)=0}.

In the context of so many negative answers and counterexamples, it sounds astonishing that, nevertheless, the weakened Nelson's conjecture is valid for the plane case! Namely, the following  sharp characterization of the chain rule property holds:

\begin{theorem}\label{chain-rule}
If $p<2$, then there exists a compactly supported divergence-free $\ve\in L^p(\R^2)$ which does not have the CRP$_\infty$.
If $p \ge 2$, then any compactly supported divergence-free $\ve \in L^p(\R^2)$ has the CRP$_\infty$.
\end{theorem}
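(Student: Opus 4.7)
I treat the two directions separately.

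\textit{Counterexample for $p<2$.} Following the Nelson-type construction of Proposition~\ref{ex-n}, I would take a compactly supported divergence-free $\ve\in L^p(\R^2)\setminus L^2(\R^2)$ whose streamlines branch at a singular point: an incoming tube of flux $a_0$ splits into two outgoing tubes of fluxes $a_1,a_2$ with $a_0=a_1+a_2$. Placing a bounded $\rho$ constant on each of the three tubes with values $\rho_0,\rho_1,\rho_2$ gives $\dive(\rho\ve)=0$ as soon as the linear balance $a_0\rho_0=a_1\rho_1+a_2\rho_2$ holds, the only contribution being at the branching point. Choosing $a_1=a_2$, $\rho_0=0$, $\rho_1=1$, $\rho_2=-1$ and $\beta(t)=t^2$, the nonlinear balance fails ($a_0\beta(\rho_0)=0$ while $a_1\beta(\rho_1)+a_2\beta(\rho_2)=a_0\ne 0$), so $\dive(\beta(\rho)\ve)$ has a nonzero atomic part at the branching point. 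The technical point is to realise this schematic picture as a genuine $L^p$-field on $\R^2$, which is possible precisely because for $p<2$ one can tube the branching structure with widths obeying the borderline $L^p$-scaling.

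\textit{Sufficiency for $p\ge 2$.} Write $\ve=\nabla^\perp f$ with $f\in W^{1,p}(\R^2)$ via \eqref{stream-function}, and note that $\rho\ve\in L^p(\R^2)$ is also compactly supported and divergence-free, so the Poincaré lemma gives $g\in W^{1,p}_{\loc}(\R^2)$ with $\rho\ve=\nabla^\perp g$, equivalently
\begin{equation*}
\nabla g=\rho\,\nabla f\quad\text{a.e.\ in }\R^2.
\end{equation*}
Thus $\nabla f$ and $\nabla g$ are collinear $\L^2$-a.e., and the Sobolev map $(f,g)\colon\R^2\to\R^2$ has vanishing Jacobian. The core step is the following structural lemma: \emph{for $f\in W^{1,p}(\R^2)$ with $p\ge 2$ and $\rho\in L^\infty(\R^2)$ with $\dive(\rho\nabla^\perp f)=0$, the function $\rho$ is $\L^2$-a.e.\ constant on each connected component of $\L^1$-a.e.\ level set $\{f=c\}$.} I would prove it by combining the coarea formula for $W^{1,p}$ functions with a disintegration of $\L^2\rest\{|\nabla f|>0\}$ along connected components of level sets of $f$, on each of which the collinearity $\nabla g=\rho\nabla f$ forces $g$ (and hence $\rho$) to be constant, followed by a measurable selection. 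The hypothesis $p\ge 2$ enters critically through the pairing $\nabla g\cdot\nabla f=\rho\,|\nabla f|^2$ being locally integrable, matching the coarea scaling at the threshold.

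\textit{Chain-rule conclusion and main obstacle.} Once the lemma is available, the chain rule property is almost automatic: if $\rho$ is constant on components of $\L^1$-a.e.\ level set of $f$, so is $\beta(\rho)$; and any such function produces distributional divergence zero when multiplied by $\ve=\nabla^\perp f$, because jumps of the multiplier across components occur along curves to which $\ve$ is tangent (as $\nabla^\perp f\perp\nabla f$), and hence the jump contribution $[\beta(\rho)]\,\ve\cdot\nu$ vanishes. Equivalently, along each component one has $\beta(\rho)\,\nabla^\perp f=\nabla^\perp(H(f))$ for some Lipschitz primitive $H$ of $\beta\circ F$ determined by Stampacchia's chain rule, which is a distributional curl and thus divergence-free. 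The principal difficulty is the structural lemma at the critical exponent $p=2$: for $p>2$ Morrey's embedding makes $f$ Hölder continuous and the level-set foliation is geometrically transparent, whereas at $p=2$ the stream function need not even be continuous and one must extract the foliation purely from the coarea formula and the rectifiability of level sets of $W^{1,2}$ functions—this is the sharp threshold above which the branching counterexamples from the other direction cease to exist.
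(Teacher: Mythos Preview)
Your treatment of the case $p<2$ is correct and matches Proposition~\ref{ex-n}: the Nelson-type branching field with $\rho=\rho_{1/2,1}-\rho_{0,1/2}$ and $\beta(t)=t^2$ is exactly the paper's construction.

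For $p\ge 2$ your strategy diverges from the paper's, and there is a real gap. The paper does \emph{not} prove your structural lemma directly. Instead it decomposes the stream function $f=\sum_i f_i$ into \emph{monotone} summands via Theorem~\ref{thm:BT}, and then shows (Proposition~\ref{div-decomp}) that $\dive(\rho\nabla^\perp f)=0$ is equivalent to $\dive(\rho\nabla^\perp f_i)=0$ for every $i$; this splitting step uses the mutual-singularity $|Df_i|\perp|Df_j|$ together with Proposition~\ref{mutual weak Sard property}. For a monotone $f_i$ the essential level set $\{f_i=t\}^*$ is a \emph{single} simple closed curve, so the coarea formula plus Proposition~\ref{steady continuity equation along curve} give directly that $\rho$ is constant $\H^1$-a.e.\ on $\{f_i=t\}$ (Proposition~\ref{div-monotone-no-rhs}). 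The chain rule follows by applying the same equivalences in reverse to $\beta(\rho)$.

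Your route tries to bypass the monotone decomposition, and this creates two problems. First, the implication ``$g$ constant on each component $\Rightarrow$ $\rho$ constant on each component'' is not justified: constancy of $g$ along a curve in $\{f=t\}$ says nothing about the \emph{normal} derivative $\nabla g\cdot\tfrac{\nabla f}{|\nabla f|}=\rho|\nabla f|$, which is what determines $\rho$. You would need a transversal differentiation argument (``$\rho=\partial g/\partial f$''), and making that rigorous at the critical exponent $p=2$ is precisely the difficulty. The paper avoids it because, after reducing to a single closed curve, Proposition~\ref{steady continuity equation along curve} yields $\rho\circ\gamma=\const$ from the disintegrated equation itself, with no need to pass through $g$. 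Second, your ``back'' step is incorrect as written: since $\rho$ may take different values on different components of the same level set $\{f=t\}$, there is no single Lipschitz $H$ with $\beta(\rho)\nabla^\perp f=\nabla^\perp(H(f))$; and the ``jump contribution $[\beta(\rho)]\,\ve\cdot\nu$ vanishes'' argument presumes a jump-set structure that a $W^{1,2}$ stream function need not provide. (A correct back step does exist---integrate $\beta(c_j)\,(\phi\circ\gamma_j)'$ over each closed curve $\gamma_j\in\mathfrak{cd}^M\{f>t\}$---but it relies on already having $\rho$ constant on each $\gamma_j$, which brings you back to the first gap.) The monotone decomposition is the paper's device for turning both of these difficulties into the single-curve case, where they disappear.
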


The first part of Theorem~\ref{chain-rule} can be proved by considering a minor modification
of the vector field constructed by Nelson \cite{Nelson_1963} (cf. \cite{Aizenman_1978}; see also \cite[p. 541--542]{DiPernaLions1989}). 
The proof of sufficiency is more involved; we will postpone the discussion of the underlying ideas for later, and now we proceed to the formulation of the second main result of the paper.

Since Nelson considered compactly supported square-integrable divergence-free vector fields on the plane, the next natural question is whether the Alberti--Bianchini--Crippa uniqueness criterion can be extended for such vector fields.
The main feature of the planar case is that by \eqref{div(v)=0} there exists a function $f\in W^{1,p}(\R^2)$ such that
\begin{equation}
	\ve = \nabla^\perp f,
	\label{stream-function}
\end{equation}
where $\nabla^\perp f \equiv (-\d_2 f, \d_1 f)$.
The~function $f$ is known as the \emph{stream function} (or \emph{Hamiltonian}) of $\ve$.
(Since~$\ve$ has a~compact support, the stream function $f$ can be chosen to be compactly supported as well.)
If $p=\infty$, then $f$ can be chosen to be Lipschitz,
and in this case it was proved in \cite{ABC_2014} that uniqueness
of bounded weak solutions to \eqref{continuity-equation}--\eqref{continuity-equation-initial-condition}
is equivalent to the following \emph{weak Sard property} of the stream function $f$:
\begin{equation*}
	f_\# \left(\L^2 \rest \{E^* \cap \{\nabla f = 0\}\}\right) \perp \L^1.
\end{equation*}
Here $\L^d$ denotes the Lebesgue measure on $\R^d$,
$E^*$ denotes the union of all connected components with positive length of all level sets of $f$,
$f_\# \mu$ denotes the image of the measure $\mu$ under the map $f$,
and $\mu \rest E$ denotes the restriction of the measure $\mu$ on the set $E$. Our second main result is the following generalization of the Alberti--Bianchini--Crippa uniqueness criterion, which originally was established for bounded divergence-free vector fields:

\begin{theorem}\label{uniqueness-criterea}
Let $p\ge 2$ and let $\ve \in L^p(\R^2)$ be a compactly supported divergence-free vector field.
Let $f \in W^{1,p}(\R^2)$ be a stream function of $\ve$.
Then the following assertions  are equivalent:
\begin{enumerate}
\item[$(i)$] $f$ has the weak Sard property;
\item[$(ii)$] $\ve$ has the renormalization property (RP$_\infty$);
\item[$(iii)$] uniqueness holds for bounded weak solutions to \eqref{continuity-equation}--\eqref{continuity-equation-initial-condition}.
\end{enumerate}
\end{theorem}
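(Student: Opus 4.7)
The plan is to prove the cycle $(i) \Rightarrow (ii) \Rightarrow (iii) \Rightarrow (i)$, following the template developed by Alberti, Bianchini and Crippa for $p = \infty$ and extending each step to Sobolev stream functions in $W^{1,p}(\R^2)$ with $p \ge 2$.

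The implication $(ii) \Rightarrow (iii)$ is the easy one. If $\rho_1, \rho_2$ are two bounded weak solutions of \eqref{continuity-equation}--\eqref{continuity-equation-initial-condition} with the same initial datum, set $\rho := \rho_1 - \rho_2$. Then $\rho \in L^\infty$ and $\rho\,\ve \in L^p \subset L^1$ thanks to the compact support of $\ve$, so the weak formulation of the continuity equation for $\rho$ is valid. Applying RP$_\infty$ with $\beta(s) = s^2$ yields conservation of $\int \rho^2(t,\cdot)\,dx$, which vanishes at $t = 0$, hence $\rho \equiv 0$.

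The implication $(i) \Rightarrow (ii)$ is the core of the argument. I would disintegrate $\L^2$ along the level sets of $f$ using the coarea formula
\begin{equation*}
\int_{\R^2} g\,|\nabla f|\,dx = \int_\R \int_{\{f=t\}} g\,d\H^1\,dt, \qquad g \ge 0 \text{ Borel},
\end{equation*}
which is applicable since $f \in W^{1,1}_{\loc}(\R^2)$. The weak Sard property implies that for $\L^1$-a.e.\ $t$ the essential level set $\{f = t\} \cap E^*$ lies, up to $\H^1$-null subsets, in the regular set $\{\nabla f \neq 0\}$, so by the implicit function theorem applied to a suitable precise representative of $f$ it splits into a countable family of $C^1$ simple closed curves to which $\ve = \nabla^\perp f$ is everywhere tangent. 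On each such curve the continuity equation reduces to a one-dimensional transport equation, on which renormalization is classical; reassembling the component-wise renormalized equations via the disintegration then yields the renormalized form of \eqref{continuity-equation} on $\R^2$. The main obstacle lies in this step: unlike in the Lipschitz case, $f$ has no continuous representative when $p = 2$, so the level-set structure must be handled through Sobolev capacity arguments, and the flux across shrinking neighbourhoods of $\{\nabla f = 0\}$ must be controlled during the smooth approximation of $\ve$. I expect the hypothesis $p \ge 2$ to enter here through a Cauchy--Schwarz estimate of the commutator/error terms, exploiting that $\ve \in L^2$ on its support.

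For $(iii) \Rightarrow (i)$ I would argue by contraposition. If $f$ lacks the weak Sard property, then $f_\#\bigl(\L^2 \rest (E^* \cap \{\nabla f = 0\})\bigr)$ has a non-trivial absolutely continuous part, so on a positive-measure set of levels $t$ the connected components of $\{f = t\}$ can be \emph{reglued} through the critical set in more than one admissible way. Following the Alberti--Bianchini--Crippa construction, this produces two distinct bounded stationary weak solutions of the Cauchy problem sharing the same initial datum, contradicting $(iii)$. The construction uses only that $\ve$ is tangent to the regular level sets and vanishes almost everywhere on $\{\nabla f = 0\}$; both properties persist in the Sobolev setting, so the argument transfers from the bounded to the $L^p$-regime without essential modification.
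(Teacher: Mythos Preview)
Your cycle $(ii)\Rightarrow(iii)$ is fine, but both substantive implications contain a genuine gap: you are missing the central tool of the paper, namely the decomposition of $f$ into a sum of \emph{monotone} Sobolev functions $f=\sum_i f_i$ with $|Df|=\sum_i|Df_i|$ and $|Df_i|\perp|Df_j|$ (Theorem~\ref{thm:BT}).

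For $(i)\Rightarrow(ii)$, your plan is to foliate directly along level sets of $f$ and then split each level into connected components via an implicit function theorem. This is exactly the step that \emph{fails} at $p=2$: $f$ has no continuous representative, its level sets need not be closed, and there is no implicit function theorem producing $C^1$ curves. The paper circumvents this by \emph{interchanging} the two foliation steps: one first splits $\dive(R\nabla^\perp f)=Q$ into the equations $\dive(R\nabla^\perp f_i)=\1_{\{\nabla f_i\ne0\}}Q$ for each monotone piece (Proposition~\ref{div-wsp-rhs}), and only then foliates. For monotone $f_i\in W^{1,2}$ the essential level set $\{f_i=t\}^*$ is a \emph{single} closed Lipschitz curve for a.e.\ $t$, so no further decomposition into connected components is needed; Proposition~\ref{div-wsp-rhs-mono} then reduces the equation on each such curve to a one-dimensional transport equation handled by Lemma~\ref{1d-renorm}. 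The splitting into monotone pieces crucially uses that the weak Sard property passes to and from the $f_i$ (Proposition~\ref{WSP-for-monotone-components}) and that $f$ is $\H^1$-a.e.\ constant on the generalized connected components of the level sets of each $f_i$ (Proposition~\ref{constancy-along-connected-components}); the latter is where $p\ge2$ enters, not through a commutator estimate as you suggest.

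For $\neg(i)\Rightarrow\neg(iii)$, two points. First, the counterexample is not a pair of \emph{stationary} solutions but a nontrivial \emph{time-dependent} bounded solution with zero initial data, built (as in \cite{ABC_2014}) level-by-level from nontrivial solutions of $\d_t((1+\lambda_y)\tilde\rho)+\d_s\tilde\rho=0$ on the parametrized curve, where $\lambda_y\perp\L^1$ encodes the critical-set mass on the level. Second, the argument does \emph{not} transfer without modification: one first passes to a monotone component $g=f_i$ lacking the weak Sard property (again Proposition~\ref{WSP-for-monotone-components}), and since $\mathbf b=\nabla^\perp g$ may be unbounded, the measurable-selection machinery of \cite[\S7]{ABC_2014} must be re-verified via a truncation-and-dominated-convergence argument. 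Your proposal does not indicate either of these steps.
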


By Theorem~\ref{chain-rule} for $p\ge 2$ the CRP$_\infty$ holds without any further assumptions on $\ve$,
while by Theorem~\ref{uniqueness-criterea} uniqueness of bounded weak solutions to \eqref{continuity-equation}--\eqref{continuity-equation-initial-condition} is equivalent to the weak Sard property of $f$.
This highlights that the chain rule property (CRP$_\infty$) is strictly weaker than the renormalization property~(RP$_\infty$).

Now, let us describe the new ideas that need to be brought to the proof of this result in comparison with~\cite{ABC_2014}. One of the main difficulties, of course, is that in our case the stream function~$f\in W^{1,2}(\R^2)$ can be discontinuous. 
Indeed, in the approach of Alberti, Bianchini and Crippa, when $f$ is Lipschitz, equation \eqref{continuity-equation} (and, similarly, its steady version \eqref{div(rho v)=0})
is foliated into an equivalent family of one-dimensional equations in three steps:
\begin{enumerate}
	\item the equation is foliated into the equations on the level sets of $f$;
	\item for almost every level set the corresponding equation is further foliated into
	the equations on the connected components of the~level set;
	\item the equation on each nontrivial connected component (which is a closed simple Lipschitz curve) is rewritten using its parametrization.
\end{enumerate}
The second step relied on closedness of the level sets of $f$, which is a consequence of continuity of~$f$,
see \cite[\S 2.8]{ABC_2013_LipSard} and \cite[Lemma 3.8]{ABC_2014}.
But the level sets of a bounded function from $W^{1,2}(\R^2)$ may fail to be closed for an interval of values. For a concrete example, one can consider the function
\begin{equation}
	f(x) = \begin{cases}
		\sin \ln |\ln |x||, & x \in \R^2 \setminus \{0\}, \\
		0, & x=0,
	\end{cases}
	\quad\text{where}\quad |x| = \sqrt{x_1^2 + x_2^2}.
\end{equation}

Our starting observation in the case $p\ge 2$ is that, fortunately, Step (2) is not needed when $f$ is \emph{monotone} in the sense of \cite[Definition 3]{BT}, see Definition~\ref{def-monotone-BT}.
This is possible because almost all level sets of monotone functions are closed simple Lipschitz curves, up to $\H^1$-negligible subsets (see Remark~\ref{non-deg-rv-of-monotone-fn}).
Monotone functions $f$ also have other useful properties, in particular for them the weak Sard property turns out to be equivalent to the following \emph{relaxed Sard property} (see Corollary~\ref{pushforward-monotone-WSP}):
\begin{equation}
	f_\# \left(\L^2 \rest \{\nabla f = 0\}\right) \perp \L^1.\label{RSP-intro}
\end{equation}
In order to reduce the case of generic stream function to the case of monotone one,
we decompose it into an at most countable sum of monotone functions.
For functions of bounded variation such decomposition was established in \cite{BT}, and it was adopted to the Sobolev setting in our recent paper  (for more details see Section~\ref{sec:decomposition-into-monotone-functions}):

\begin{theorem}[\cite{BT,GK2025}]\label{decomposition-W1p}
	Let $p\in [1,\infty]$ and let $f\in W^{1,p}(\R^d)$ be compactly supported.
	Then there exists an at most countable family of compactly supported monotone functions $f_i \in W^{1,p}(\R^d)$ {\rm(}called monotone components of~$f${\rm)} such that  for each $i$ either $0 \le f_i \le 1$ or $-1 \le f_i \le 0$ a.e. and
	\begin{enumerate}
		\item[(a)] $f = \sum_{i} f_i$ {\rm(}the series converge strongly in $L^1_\loc(\R^d)${\rm)};
		\item[(b)] $|Df| = \sum_{i} |D f_i|$ {\rm(}as measures{\rm)};
		\item[(c)] the sets $\{\nabla f_i \ne 0\}$ are pairwise disjoint {\rm(}up to negligible subsets{\rm)}.
	\end{enumerate}
\end{theorem}

Property (c) in Theorem~\ref{decomposition-W1p} implies that
for all $i\ne j$ we have $\{\nabla f_j \ne 0\} \subset \{\nabla f_i = 0\} \mod \L^2$. The last property can be strengthened to the following refined version: 
\begin{equation}\label{partial-WSP}
	(f_i)_\# (\L^2 \rest \{\nabla f_j \ne 0\}) \perp \L^1.
\end{equation}
 (see Lemma~\ref{lemma-mutual-weak-Sard-property}).
This result 
allows us to prove that for a compactly supported $f\in W^{1,2}(\R^2)$
\begin{equation}\label{split-1}
\mbox{the~equation $\dive(\rho \nabla^\perp f) = 0$ is equivalent to the family of equations $\dive (\rho \nabla^\perp f_i) = 0$}
\end{equation}
(see Proposition~\ref{div-decomp}\,). 
This reduces \eqref{div(rho v)=0} to the case of monotone stream function, in which one can essentially follow the original approach of Alberti, Bianchini and Crippa.
In particular, we prove that $\dive(\rho \nabla^\perp f_i) = 0$ holds if and only if $\rho$ is constant $\H^1$-a.e. on almost all level sets of~$f_i$ (see Proposition~\ref{div-monotone-no-rhs}). This already enough for the proof of Theorem~\ref{chain-rule}.

The proof of Theorem~\ref{uniqueness-criterea} is much more involved, but it is also based on the reduction to the case of monotone stream function.
The key step in this approach is the following result, which might be of independent interest:

\begin{theorem}\label{wsp-and-decomp}
	Suppose that the assumptions of Theorem~\ref{decomposition-W1p} hold with $p=d=2$. 
	Then the function $f$ has the weak Sard property if and only if for each $i$ the monotone component $f_i$ has the weak Sard property.
\end{theorem}

Note, that here and henceforth we always assume, that all Sobolev functions (in particular, $f$ and $f_i$ from the formulation of the last theorem) are {\it precisely represented} (see Section~\ref{sec:np} for the definition and details).  
Though the claims~\eqref{split-1} and Theorem~\ref{wsp-and-decomp} look very similar, their nature is rather different, and the last assertion is much more involved: while 
\eqref{split-1}  follows from a mollification argument and the Coarea formula (see Lemma~\ref{lemma-constancy-along-connected-components} and Lemma~\ref{lemma-mutual-weak-Sard-property}), the proof of Theorem~\ref{wsp-and-decomp} requires more subtle technique. 
Indeed, a function $f$ has a~weak Sard property iff there exists a negligible subset $X$ of its critical set $Z$ such that for almost all $t\in \R$ the nontrivial connected components of the level sets $\{f=t\}$ have \emph{empty} intersection with $Z \setminus X$. 
But the Coarea formula allows to control such intersection only up to $\H^1$-negligible subsets!
Moreover, even if such intersection consists of {\it a single point} for a.e. $t$, then $f$ may still fail to have a~weak Sard property, as the example in \cite[Section~4]{ABC_2013_LipSard} shows\,!!
In order to overcome these difficulties, in Section~\ref{section-fine-properties} we establish some new fine properties of the level sets of functions from $W^{1,1}(\R^2) \cap W^{1,2}(\R^2)$.  As a byproduct of our techniques, we show that monotonicity compensates for the lack of regularity for the Sobolev embedding discussed above:
\begin{theorem}\label{monotone-Sobolev-embedding}
	Suppose that $g\in W^{1,1}(\R^2) \cap W^{1,2}(\R^2)$ is bounded and monotone. Then $g$ is continuous, i.e., $g\in C(\R^2)$. In particular, under the assumptions of Theorem~\ref{decomposition-W1p},
	if $p=d=2$, then for all $i$ the monotone components $f_i$ are continuous.
\end{theorem}
In our opinion this result is particularly surprising, since without using any mollifiers we decompose a function, which might be discontinuous, into sum of continuous functions.
Furthermore, we prove that monotone functions from $W^{1,1}(\R^2) \cap W^{1,2}(\R^2)$, which are possibly unbounded, are continuous on all plane except for at most two points, where they can have pole-like singularities.
Quite unexpectedly, the stream function of the vector field, suggested by Nelson, shows that for $f\in W^{1,p}(U)$ with $p<2$ the analog statement does not hold, which indicates sharpness of the result above.

Ultimately, in Theorem~\ref{alt-mono} we prove that a~function $f$ from $W^{1,1}(\R^2)\cap W^{1,2}(\R^2)$ is monotone if and only if for all $t\in \R$ the level set $\{f=t\}$ is connected.
The latter property is a more common definition of monotonicity for continuous functions, see, e.g., \cite[p.~358]{Engelking1989}.

Since the paper deals with a sharp version of the results under minimal regularity assumptions, we necessarily need many technical statements about the subtle properties of level sets of Sobolev functions (which are discontinuous in general, etc.).  Thus, for a reader's convenience, below we give a more detailed "map" for the paper, which is organized as follows. Section~\ref{sec:np} contains some necessary notation and includes certain classical facts from the theory of sets of finite perimeter and functions of bounded variation,
which are used to develop some preliminary results:
\begin{itemize}
\item In Section~\ref{section-coarea} we recall the Coarea formula for BV and Sobolev functions and discuss the relationship between the level sets and the essential boundaries of the upper level sets for Sobolev functions, etc.

\item In Section~\ref{sec:decomp-of-sets} we collect some properties of indecomposable and simple sets, established in \cite{ACMM_2001}. 
In Section~\ref{sec:decomposition-into-monotone-functions} we turn to functions of bounded variation and discuss the definition of monotonicity, introduced in \cite{BT}. 
Then we state a~strengthened  version of Theorem~\ref{decomposition-W1p}~--- Theorem~\ref{thm:BT} concerning the monotone decompositions. 

\item In Section~\ref{sec:vector-measures-induced-by-curves} we focus on the relationship between the simple sets on the plane and curves. In particular, we prove that the essential boundary of a set $E\subset \R^2$ with finite perimeter is an at most countable union of nontrivial closed simple Lipschitz curves, which have $\H^1$-negligible intersections with each other (see Corollary~\ref{structure-of-essential-boundary-2d}).
If $E = \{f>t\}$ for some $f\in W^{1,1}(\R^2)$ and $t\in \R$, then we denote the union of the corresponding curves by $\{f=t\}^*$, and call it the \emph{essential level set of~$f$}.

\item In Section~\ref{sec:reg-val} we discuss the properties of \emph{regular} level sets of of any precisely represented $f\in W^{1,1}(\R^2)$ for a.e. $t\in \R$. In particular, it turns out the level set $\{f>t\}$ has finite perimeter, $\{f=t\}$ and $\{f=t\}^*$ coincide up to $\H^1$-negligible subsets, the gradient $\nabla f$ does not vanish $\H^1$-a.e. on $\{f=t\}$, and, ultimately, its direction agrees with  generalized inner normal to $\{f>t\}$ $\H^1$-a.e. on $\{f=t\}$ (see Theorem~\ref{reg-vals}).
The values $t$ for which the level sets of $f$ have these properties for convenience will be called \emph{regular}. 
\end{itemize}

In section~\ref{sec:parallel-gradients} we study some properties of the regular level sets of
two functions $f,g \in W^{1,1}(\R^2) \cap W^{1,2}(\R^2)$ with $\nabla f \parallel \nabla g$ a.e.
We prove that for a.e. regular value $t$ of $g$ for every curve $\gamma$ from $\{g=t\}^*$ the function $f$ is constant $\H^1$-a.e. on $\gamma$ (Lemma~\ref{lemma-constancy-along-connected-components}). 
In the particular case when the gradients of $f$ and $g$ are concentrated on disjoint subsets, we show in Lemma~\ref{lemma-mutual-weak-Sard-property} that the corresponding essential values of $f$ are not regular (with respect to $f$). This leads, in particular, to \eqref{partial-WSP}. 

In Section~\ref{sec:crp}, we digress from further investigation of the level sets to apply the developed results to the chain rule problem. In particular, we prove Theorem~\ref{chain-rule} and Proposition~\ref{main-I}.

In Section~\ref{section-fine-properties}, we resume the investigation of the level sets of Sobolev functions.
First we show that for monotone functions from $W^{1,1}(\R^2)$ the \emph{pointwise} inclusion $\{f=t\} \subset \{f=t\}^*$ holds for a.e. $t$ (see Theorem~\ref{levelset-subset-of-essential-levelset}).
In general, the converse inclusion does not hold, even if $\nabla f \in L^p(\R^2)$ with $p<2$ (see Remark~\ref{Nelson-reprise}).
However, under the additional assumption that $f\in W^{1,2}(\R^2)$, we prove that $\{f=t\}$ coincides with $\{f=t\}^*$ for a.e. $t\in \R$ (see Proposition~\ref{almost-all-level-sets-are-essential}). In Section~\ref{sec-connected-components}, we study the connected components of the level sets of functions from $W^{1,2}(\R^2)$. 
In particular, we show that the union of all non-trivial connected components of their level sets is Borel (for Lipschitz functions, such a result was obtained
in~\cite{ABC_2013_LipSard}). In Section~\ref{sec-monotonicity-and-continuity}, we prove the equivalence of the two definitions of monotonicity mentioned above
(see Theorem~\ref{alt-mono}).

In Section~\ref{sec:wsp}, we study the interplay between the weak Sard property and a decomposition of a function from $W^{1,1}(\R^2) \cap W^{1,2}(\R^2)$ into monotone components. In particular, we prove Theorem~\ref{wsp-and-decomp}.

In Section~\ref{sec:uniq}, we finally turn to the continuity equation and prove Theorem~\ref{uniqueness-criterea}.
The core of our approach is the decomposition of the stream function into monotone components.
It helps us to avoid the issue that the level sets of functions from $W^{1,2}(\R^2)$ are not necessarily closed.
Besides, Theorem~\ref{wsp-and-decomp} allows us to establish sufficiency of the weak Sard property for uniqueness without using the disintegration theorem.

\section{Notation and preliminaries}\label{sec:np}
For any vector $a \in \R^d$ let $|a|$ denote the Euclidean norm of $a.$
For any function $f\colon E \to \R$ (where $E \subset \R^d$) let $\{f>t\},$ $\{f=t\}$ and $\{f<t\}$ denote respectively the corresponding superlevel (upper level set), level set and sublevel (lower level set) of $f$.
Let $\1_E$ denote the indicator of the set $E.$
Given a Borel vector measure $\mu$ on $\R^d$ let $|\mu|$ denote the total variation of the measure $\mu$.
By $\mu\rest E$ we denote the restriction of $\mu$ to $E,$
i.e., $(\mu \rest E)(A) = \mu(E \cap A)$ for any Borel set $A \subset \R^d$.
For any Borel function $f\colon \R^n \to \R^m$ and any Borel measure $\mu$ on $\R^n$
the image (pushforward) of $\mu$ under $f$ is denoted with~$f_\# \mu$.
If $\mu \ge 0$, then $L^1(\mu)$ denotes the space of functions integrable with respect to $\mu$.
In our notation, the elements of $L^1(\mu)$ are fixed Borel functions defined at every point (not equivalence classes). If two Borel measures, $\mu$ and $\nu$, are mutually singular, we write $\mu \perp \nu$.
If two sets $A$ and $B$ coincide up to a $|\mu|$-negligible set (i.e., $|\mu|(A \triangle B) = 0$),
then we say that $A=B$ modulo $\mu$ (and write $A=B$ $\mod \mu$).
The complement of a set $E\subset \R^d$ will be denoted with $E^c$.
As usual, we say that $\mu$ is concentrated on $E$ if $|\mu|(E^c)=0$.
Given a sequence $\{\mu_k\}_{k\in \N}$ on non-negative Borel measures on $\R^d$, we say that $\mu_k \to \mu$ (as measures), if for any Borel set $B\subset \R^d$ we have $\mu_k(B) \to \mu(B)$ as $k\to \infty$.

Given an open set $U \subset \R^d$ let $C_c^\infty(U)$ denote the set of compactly supported infinitely differentiable functions $\fhi\colon U \to \R.$ Let $C_c^\infty(U)^m$ denote the set of compactly supported infinitely differentiable maps $\bm \fhi \colon U \to \R^m$ with the components $(\fhi_1, \ldots, \fhi_m)$; a similar notation will be used for vector-valued functions from other functional spaces, and we will often omit the superscript $m$ whenever it is clear from the context. The set of Lipschitz functions on the set $E \subset \R^d$ is denoted with $\Lip(E)$. As usual, the $s$-dimensional Hausdorff measure is denoted with $\H^s.$

Working with Sobolev functions~--- especially, with stream functions (Hamiltonians),~--- we {\it always assume} that
the precise representatives are chosen. Recall, that if ${U}\subset\R^d$ is an~open set and $f\in
W^{1,q}_\loc({U})$, then the precise representative $f^*$ is
defined for \textit{all} $x \in {U}$ by
\begin{equation}
\label{lrule}f^*(x)=\left\{\begin{array}{rcl} {\displaystyle
\lim\limits_{r\searrow 0} \dashint_{B(x,r)}{f}(y)\,\dd y}, &
\mbox{ if the limit exists
and is finite,}\\
 0 \qquad\qquad\quad & \; \mbox{ otherwise}
\end{array}\right.
\end{equation}
where the dashed integral as usual denotes the integral mean,
$$
\dashint_{B(x,r)}{f}(y) \, \dd
y=\frac{1}{\L^n(B(x,r))}\int_{B(x,r)}{ w}(y)\,\dd y,
$$
and $B(x,r)=\{y: |y-x|<r\}$ is the open ball of radius $r$
centered at $x$. Henceforth we omit special notation for the
precise representative assuming simply $f^* = f$. Furthermore, recall, that $x\in{U}$ is a Lebesgue point for $f$, if 
\begin{equation}\label{app-lim}
\lim_{r\to 0} \dashint_{B(x,r)}|f(y) - f(x)| \, \dd y = 0.
\end{equation}

The set $S_f$ of all non-Lebesgue points $x\in U$ where this property does not hold is called the \emph{approximate discontinuity set of $f$}. Recall that $S_f$ is an $\L^d$-negligible Borel set for locally integrable functions (see, e.g., \cite[Proposition~3.64]{AFP}).
It is well-known that for Sobolev functions the approximate discontinuity set is even smaller:

\begin{prop}\label{approx-discon-set-of-Sobolev-fn}
	If $f\in W^{1,1}_\loc(U)$, then $\H^{d-1}(S_f) = 0$.
\end{prop}

\begin{proof}
	Without loss of generality let us assume that $U=\R^d$ and $f\in W^{1,1}(\R^d)$;
	the general case can be easily reduced to this setting using some appropriate cutoff functions.
	
	By \cite[Theorem 4.19]{EG2015} there exists a Borel set $E \subset U$ such that
	\begin{enumerate}
		\item $\operatorname{Cap}_1(E) = 0$, where $\operatorname{Cap}_p(E)$ denotes the $p$-capacity of the set $E$; see for instance \cite[Definition 4.10]{EG2015}.
		\item for all $x\in U\setminus E$ there exists finite $f(x)$ and
		\begin{equation}
			\lim_{r\to 0} \dashint_{B_r(x)} |f(y) - f(x)|^{1^*} \, dy = 0,
		\end{equation}
		where $1^* = \frac{d}{d-1}$.
	\end{enumerate}
	Consequently, for all $x\in U\setminus E$
	\begin{equation}
		\lim_{r\to 0} \dashint_{B_r(x)} |f(y) - f(x)| \, dy = 0,
	\end{equation}
	and therefore $S_f \subset E$. On the other hand $\operatorname{Cap}_1(E) = 0$ is equivalent to $\H^{d-1}(E) = 0$ by \cite[Remark after Theorem 4.17]{EG2015}. 
	
	Alternatively, one could use \cite[Theorem 3.78]{AFP} and recall that the approximate jump set for Sobolev functions is negligible (for instance, by \cite[Lemma 3.76]{AFP}).
\end{proof}

In particular, for $d=2$ we conclude that 
\begin{equation}
\H^1(S_f) = 0\quad\mbox{ for any }f\in W^{1,1}_\loc(U).
	\end{equation}

\subsection{Functions of bounded variation and sets of finite perimeter} The set of Lipschitz functions on the set $E \subset \R^d$ is denoted with $\Lip(E)$. As usual, the $s$-dimensional Hausdorff measure is denoted with $\H^s.$
Let $\BV(U)$ denote the space of functions $f\colon U \to \R$ of bounded variation.
Given $f\in \BV(U)$, let $Df$ denote the vector measure associated with the derivative of $f.$
In particular, for any $\bm \fhi \in C_c^\infty(U)$ we have
$$
\int_U f \, \div \bm \fhi \, dx = - \int_U \bm \fhi \, d Df.
$$
If $f$ belongs to the Sobolev space $W^{1,1}(U)$, then $Df = (\nabla f) \L^d,$
where $\L^d$ denotes the Lebesgue measure on $\R^d.$
The total variation of a function $f\in \BV(U)$ will be denoted with $TV(f),$
i.e., $TV(f) = |Df|(U)$.

Recall that, given a (Lebesgue) measurable set $E \subset \R^d$,
\begin{equation*}
P(E) := \sup\left\{\int_E \div \bm \fhi \, dx \; \middle| \; \bm \fhi \in C_c^\infty(\R^d), \; \|\bm \fhi\|_{\infty} \le 1 \right\}
\end{equation*}
is called the \emph{perimeter of $E$.}
From the definition it is clear that for any measurable set $E$ the perimeter $P(E)$
is equal to the total variation of the indicator function $\1_E$ of this set.
If $P(E) < +\infty$, then $E \subset \R^d$ is called \emph{a set of finite perimeter}.
A set $E$ with finite measure has finite perimeter iff $\1_E \in \BV(\R^d)$
(and in this case $P(E) = |D\1_E|(\R^d)$).

For any $\alpha \in [0,1]$ and any measurable $E \subset \R^d$ let
\begin{equation}
E^\alpha := \left\{x\in \R^d \;\middle|\; \lim_{r\to 0} \tfrac{\L^d(B_r(x) \cap E)}{\L^d(B_r(x))} =\alpha \right\},
\label{alpha-density}
\end{equation}
where $B_r(x)$ denotes the open ball with radius $r$ and center $x.$
If $E$ has finite perimeter, then the set
\begin{equation*}
\d^* E := \left\{x\in \R^d \;\middle|\; \exists \bm \nu(x) := \lim_{r\to 0} \tfrac{D\1_E(B_r(x))}{|D\1_E|(B_r(x))}\text{ and } |\bm \nu(x)|=1 \right\}
\end{equation*}
is called the \emph{reduced boundary of $E$}.
The vector field $\bm \nu_E := \bm \nu$ defined above is called the \emph{generalized inner normal to $E$}.
Furthermore, the set $\d^M E := \R^d \setminus (E^0 \cup E^1)$ is called the \emph{essential boundary of $E$}.
Of course, for any measurable $E \subset \R^d$ we have  $\d^M E \subset \d E$.

Now let us collect some standard facts from \cite[\S 2.11]{AFP} about the
approximate tangent spaces.
Let $\G_m$ denote the Grassmanian manifold of $m$-dimensional subspaces of $\R^d$.
Let $\mu$ be a Borel measure on an open set $\Omega \subset \R^d$ and let $x\in \Omega$.
For any $r>0$ let
\begin{equation*}
	\mu_{x,r} := (\Phi_{x,r})_{\#} \mu, \qquad \text{ where } \qquad \Phi_{x,r}(y) := \frac{y-x}{r}.
\end{equation*}
We say that \emph{$\mu$ has approximate tangent space $\pi \in \G_m$} if $r^{-d} \mu_{x,r} \to \H^m \rest \pi$ weakly* as $r\to 0$;
in this case we write $\Tan(\mu, x) = \pi$.
If $E \subset \Omega$ is an $\H^m$-measurable set with $\H^m(E)<\infty$ and $\Tan(\H^m \rest E, x) = \pi$, then $\pi$ is called the \emph{approximate tangent space to $E$ at $x$};
in this case we write $\Tan^m(E, x) = \pi$.

For images of measurable sets under injective Lipschitz maps the approximate tangent spaces exist and coincide with the range of the differential: 

\begin{prop}[{\cite[Proposition 2.88]{AFP}}]\label{prop:tang_smooth}
	Let $\fhi \colon \R^k \to \R^N$ be an injective Lipschitz map and let $D \subset \R^k$ be a $\L^k$-measurable set. Then
	\begin{equation*}
		\Tan(\fhi(D),y) = d\fhi_{\fhi^{-1}(y)}(\R^k)  \quad \text{ for } \H^{k} \text{-a.e. } y \in \fhi(D),
	\end{equation*}
	where $d\fhi_x$ is the usual differential of $\phi$ at $x$.
\end{prop}

For sets with finite perimeter the approximate tangent space to the reduced boundary is determined by the generalized inner normal:

\begin{theorem}[De Giorgi, {\cite[Theorem 3.59 and Eq. (3.60)]{AFP}}] \label{thm:de_giorgi} Let $E\subset\R^d$ be a Lebesgue measurable set of finite perimeter. Then $\d^* E$ is countably $(d-1)$-rectifiable and $\vert D\1_E \vert = \H^{d-1}\rest \d^* E$. In addition, the approximate tangent space to $E$ at $x$ coincide with the orthogonal hyperplane to $\bm\nu_E(x)$ for $\H^{d-1}$-a.e. $x \in \d^* E$, i.e., 
	\begin{equation*}
		\Tan(\d^* E,x) = \bm\nu_E^\perp(x).   
	\end{equation*}
\end{theorem}

The link between the reduced boundary, the essential boundary and the set of points of density $1/2$ is provided by the following theorem (see \cite[Theorem 3.61]{AFP}):
\begin{theorem}[Federer]\label{thm:de_giorgi_federer}
	If $E \subset \R^d$ has finite perimeter, then 
	\begin{equation*}
		\d^* E \subset E^{1/2} \subset \partial^M E
	\end{equation*}
	and $\H^{d-1}(\partial^M E \setminus E^*)=0$.
\end{theorem}

If $E$ has finite perimeter, then by the generalized Gauss-Green theorem (see e.g. \cite{AFP})
\begin{equation}
	D \1_E = \bm \nu_E \H^{d-1} \rest \d^* E,
	\label{Gauss-Green}
\end{equation}
where $\bm \nu_E$ is the generalized inner normal to $E$.
Furthermore, it is well-known that $\d^* E,$ $\d^M E$ and $E^{1/2}$ coincide up to $\H^{d-1}$-negligible sets, so any of these sets can be used in the right-hand side of \eqref{Gauss-Green}.

\subsection{Coarea formula}\label{section-coarea}

One of the main technical tools used in this paper is the Coarea formula for BV and Sobolev functions, see for instance \cite[Theorem 3.40]{AFP} and \cite[Theorem 1.1]{MSZ_2003}.

\begin{theorem}[Coarea formula for BV]
Let $f\in \BV(\R^d)$. 
Then for a.e. $t\in \R$ the set $\{f>t\}$ has finite perimeter and
\begin{equation}
	|Df|(B) = \int_\R |D \1_{\{f>t\}}|(B) \, dt, \qquad Df(B) = \int_\R D \1_{\{f>t\}}(B) \, dt.
\end{equation}
for any Borel set $B \subset \R^d$.
\end{theorem}

Under assumptions of the previous theorem, for any bounded Borel function $\fhi \colon \R^d \to \R$ we also have
\begin{equation}
	\int_{\R^d} \fhi \, d |Df| = \int_\R \int_{\R^d} \fhi \, d |D\1_{\{f>t\}}|  \, dt,
	\qquad
	\int_{\R^d} \fhi \, d Df = \int_\R \int_{\R^d} \fhi \, d D\1_{\{f>t\}}  \, dt.
\end{equation}

\begin{theorem}[Coarea formula for Sobolev functions]
	Let $f\in W^{1,1}(\R^d)$. 
	Then for a.e. $t\in \R$ the set $\{f=t\}$ is countably $\H^{d-1}$-rectifiable and
	\begin{equation}\label{coarea-s}
		\int_{B} |\nabla f| \, dx = \int_\R \H^{d-1}(\{f=t\} \cap B) \, dt
	\end{equation}
	for any measurable set $B \subset \R^d$.
\end{theorem}

Under assumptions of the previous theorem, for any bounded measurable function $\fhi \colon \R^d \to \R$ we also have

\begin{equation}\label{coarea-fhi}
	\int_{B} \fhi |\nabla f| \, dx = \int_\R \int_{\{f=t\}} \fhi \, d \H^{d-1} \, dt.
\end{equation}
By monotone convergence theorems, the last identity is also true if  $\fhi$ is measurable and nonnegative (not necessarily bounded). 

The next two statements (Propositions~\ref{constancy on reduced boundary}--\ref{gen-inner-normal}) are widely known among experts and often used; this is a kind of mathematical folklore, but for the reader's convenience, we present an accurate proof based on the previous theorems in Appendix~\ref{appendix-connectedness-lemma}. 

\begin{prop}\label{constancy on reduced boundary}
	If $f \in W^{1,1}(\R^d)$, then for a.e. $t\in \R$ we have
	\begin{equation}
		\d^M \{f>t\} = \{f=t\}
		\label{f is constant on the reduced boundary of superlevelset}
	\end{equation}
	up to $\H^{d-1}$-negligible subsets.
\end{prop}

\begin{prop}\label{gen-inner-normal}
	If $f\in W^{1,1}(\R^d)$, then for a.e. $t\in \R$ the properties
	\begin{subequations}
		\begin{equation}
			\nabla f \ne 0,
			\label{gradient does not vanish}
		\end{equation}
		\begin{equation}
			\bm \nu_{\{f>t\}} = \frac{\nabla f}{|\nabla f|}
			\label{generalized inner normal for Sobolev function}
		\end{equation}
	\end{subequations}
	hold $\H^1$-a.e. on $\{f=t\}$.
\end{prop}

\subsection{Decomposition of sets with finite perimeter}\label{sec:decomp-of-sets}
A measurable set $E \subset \R^d$ with finite perimeter is called \emph{indecomposable} if it cannot be written as a disjoint union $E = A \sqcup B$ of two measurable sets $A$ and $B$ such that $P(E) = P(A) + P(B).$
For open sets there is a natural criterion of indecomposablity (see \cite[Proposition 2 and Theorem 2]{ACMM_2001}):
\begin{theorem}\label{indecomposablity-criterion-for-open-sets}
	An open set in $U \subset \R^d$ with $\H^{d-1}(\d U)<\infty$ is indecomposable if and only if it is connected.
\end{theorem}
Let us recall some classical results obtained in \cite[Proposition 3 and Theorem 1]{ACMM_2001}.

\begin{prop}
\label{indec-prop}
Let $\{A_i\}_{i\in I}$ be a finite or countable family of sets with finite perimeter and let $A=\bigcup_{i\in I} A_i$. Suppose that $A_i \ne \R^N$ for all $i\in I$
and $\sum_i P(A_i) < \infty.$ Then the following conditions are equivalent:
\begin{itemize}
 \item[(i)] $P(A) \ge \sum_i P(A_i)$;
 \item[(ii)] $P(A) = \sum_i P(A_i)$;
 \item[(iii)] for any $i\ne j$ we have $|A_i \cap A_j| = 0$ and $\H^{d-1}(\d^M A_i \cap \d^M A_j)=0$;
 \item[(iv)] for any $i\ne j$ we have $|A_i \cap A_j| = 0$ and $\bigcup_i \d^M A_i \subset \d^M A$
 (up to $\H^{d-1}$-negligible subsets).
\end{itemize}
If these conditions are fulfilled, then we have also $\d^M A = \bigcup_i \d^M A_i$
(modulo $\H^{d-1}$) and
\begin{equation*}
\H^{d-1}(A^1 \setminus \bigcup_{i\in I} A_i^1) = 0.
\end{equation*}
where $A^1$ is defined in \eqref{alpha-density}.
\end{prop}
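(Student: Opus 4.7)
The plan is to rely on the identification $P(E) = \H^{d-1}(\d^M E)$ for any set $E$ of finite perimeter, together with the density characterization $\d^M E = \R^d \setminus (E^0 \cup E^1)$ and Federer's coincidence $\d^* E = \d^M E = E^{1/2}$ modulo $\H^{d-1}$. Throughout, everything is argued up to $\H^{d-1}$-negligible subsets of $\R^d$.

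First I would settle (i)\,$\Leftrightarrow$\,(ii) via the general sub-additivity $P(A) \le \sum_i P(A_i)$, which follows by iterating the pointwise identity $\1_{B\cup C} + \1_{B\cap C} = \1_B + \1_C$ (yielding $P(B\cup C) + P(B\cap C) \le P(B) + P(C)$) and then passing to the limit along the finite partial unions using $L^1$-convergence of the indicator functions and lower semicontinuity of the total variation. This also gives the key relations needed below.

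For (ii)\,$\Rightarrow$\,(iii), I would argue pointwise on densities. If $|A_i\cap A_j|>0$ for some $i\ne j$, then $P(A_i\cap A_j)$ may or may not vanish, but in any case, $\1_{A_i\cup A_j} + \1_{A_i\cap A_j} = \1_{A_i}+\1_{A_j}$ combined with $\L^d(A_i\cap A_j)>0$ leads to a strict loss when iterated to $A$, contradicting equality in (ii). Once $|A_i\cap A_j|=0$, for $\H^{d-1}$-a.e.\ $x\in\d^M A_i \cap \d^M A_j$ both $A_i$ and $A_j$ have density $1/2$ at $x$, so their $\L^d$-disjoint union has density~$1$ there; hence $\d^M A_i\cap\d^M A_j\subset(A_i\cup A_j)^1$ modulo $\H^{d-1}$, and this piece is lost from $\d^M(A_i\cup A_j)$, which again produces a strict loss in the perimeter sum and contradicts~(ii). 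For (iii)\,$\Rightarrow$\,(iv), at $\H^{d-1}$-a.e.\ $x\in \d^M A_i$ the point $x$ lies outside $\d^M A_j$ for every $j\ne i$, so each such $A_j$ has density $0$ or $1$ at $x$; density $1$ is excluded by $|A_i\cap A_j|=0$ since $A_i$ has density $1/2$ at $x$; therefore every $A_j$ ($j\ne i$) has density $0$ at $x$, and so $A=\bigcup_k A_k$ has density $1/2$ at $x$, i.e.\ $x\in\d^M A$.

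For (iv)\,$\Rightarrow$\,(ii), the two conditions in (iv) combined with the same density bookkeeping force $\d^M A_i\cap \d^M A_j$ to be $\H^{d-1}$-negligible for $i\ne j$ (a common essential boundary point would have $A$ of density $1$, contradicting $\d^M A_i\cup\d^M A_j\subset \d^M A$), so
$$
P(A) = \H^{d-1}(\d^M A) \ge \H^{d-1}\Bigl(\bigcup_i \d^M A_i\Bigr) = \sum_i \H^{d-1}(\d^M A_i) = \sum_i P(A_i),
$$
which with sub-additivity gives equality. The remaining identity $\d^M A = \bigcup_i \d^M A_i$ modulo $\H^{d-1}$ follows from the density argument in (ii)\,$\Rightarrow$\,(iii): an $x\in\d^M A$ with density $1/2$ in $A$ and density $0$ in every $A_j$ would force $A$ to have density $0$ at $x$, so some $A_i$ has positive density at $x$, and by the already-established disjointness of essential boundaries that $A_i$ must have density $1/2$, placing $x\in\d^M A_i$. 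The final statement $\H^{d-1}(A^1\setminus \bigcup_i A_i^1) = 0$ is analogous: for $\H^{d-1}$-a.e.\ $x\in A^1\setminus\bigcup_i\d^M A_i$ some $A_i$ has positive density at $x$, and by the boundary disjointness that density must equal~$1$, so $x\in A_i^1$.

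The main obstacle is purely organizational: every passage between $\d^*$, $\d^M$ and $E^{1/2}$ and every density argument discards an $\H^{d-1}$-null exceptional set, and across a countable family these must be arranged so as not to accumulate into something non-negligible, which is why one must work on the ``generic'' boundary point of each $A_i$ simultaneously rather than pair by pair.
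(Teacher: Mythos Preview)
The paper does not supply its own proof of this proposition: it is quoted from \cite[Proposition~3]{ACMM_2001}, so there is nothing in the paper to compare against directly. Your density-based sketch is in the right spirit, and the implications (i)$\Leftrightarrow$(ii), (ii)$\Rightarrow$(iii), and (iv)$\Rightarrow$(ii) are fine as outlined.

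There is, however, a genuine gap in your argument for (iii)$\Rightarrow$(iv), and the same gap recurs in the two supplementary statements at the end. You write: ``every $A_j$ ($j\ne i$) has density $0$ at $x$, and so $A=\bigcup_k A_k$ has density $1/2$ at $x$.'' For a \emph{countable} family this inference is invalid: a countable disjoint union of sets each having density~$0$ at $x$ can perfectly well have density $1/2$ (or~$1$) at $x$ --- think of a sequence of thin concentric annuli shrinking to $x$. Your closing paragraph diagnoses the countability difficulty as one of $\H^{d-1}$-null exceptional sets accumulating, but that is not the obstruction here; the obstruction is that pointwise densities are only \emph{finitely} additive.

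The repair uses the hypothesis $\sum_i P(A_i)<\infty$, which your argument never invokes. One clean route: once (iii) holds, the finite-family case (where your density argument is valid) gives $P(A^{(n)}) = \sum_{i\le n} P(A_i)$ for $A^{(n)}:=\bigcup_{i\le n} A_i$; then from $D\1_A = D\1_{A^{(n)}} + D\1_{A\setminus A^{(n)}}$ and $P(A\setminus A^{(n)}) \le \sum_{i>n} P(A_i) \to 0$ one gets $P(A) \ge P(A^{(n)}) - P(A\setminus A^{(n)}) \to \sum_i P(A_i)$, i.e.\ (iii)$\Rightarrow$(i)$\Rightarrow$(ii). Then (ii)$\Rightarrow$(iv) follows by applying the two-set case to the disjoint splitting $A = A_i \cup (A\setminus A_i)$, which forces $\d^M A_i \subset \d^M A$. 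Alternatively --- and this is closer to how \cite{ACMM_2001} proceeds --- work with the vector measures rather than densities: once the $A_i$ are pairwise $\L^d$-disjoint one has $D\1_A = \sum_i D\1_{A_i}$, and since each $D\1_{A_j}$ is concentrated on $\d^*A_j$ (pairwise $\H^{d-1}$-disjoint under (iii)), one reads off $|D\1_A|\rest \d^*A_i = |D\1_{A_i}|$ for every $i$, whence $P(A) \ge \sum_i P(A_i)$ and $\d^* A_i \subset \d^* A$ modulo $\H^{d-1}$ at once.
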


\begin{theorem}[Decomposition theorem]\label{thm:ACMM}
Let $E$ be a set with finite perimeter in $\R^d$. Then there exists a unique (up to permutations) at most countable family of pairwise disjoint indecomposable sets $\{E_i\}_{i\in I}$ such that $\L^d(E_i)>0$, $E=\bigcup_{i\in I} E_i$, and $P(E) = \sum_{i} P(E_i)$.
Moreover,
\begin{equation*}
\H^{d-1}(E^1 \setminus \bigcup_{i\in I} E_i^1) = 0,
\end{equation*}
and for any indecomposable $F \subseteq E$ with $\L^d(F)>0$ there exists a unique $j\in I$ such that $F \subset E_j$ (up to $\L^d$-negligible subsets).
\end{theorem}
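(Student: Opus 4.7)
The plan is to establish existence of the decomposition by a maximality/refinement argument whose central tool is Proposition~\ref{indec-prop}, and then to derive uniqueness, the perimeter equality and the $\H^{d-1}$-covering of $E^1$ from the same proposition. The crux is a \emph{merging lemma}: if $F, G \subseteq E$ are indecomposable sets of positive Lebesgue measure with $\L^d(F \cap G) > 0$, then $F \cup G$ is indecomposable. To prove it, suppose for contradiction that $F \cup G = A \sqcup B$ is a non-trivial admissible splitting, i.e.\ both $A, B$ have positive measure and $P(F \cup G) = P(A) + P(B)$. Intersecting with $F$, the identity \eqref{Gauss-Green} together with the equivalence $(iii) \Leftrightarrow (ii)$ in Proposition~\ref{indec-prop} shows that $F = (F \cap A) \sqcup (F \cap B)$ is itself an admissible splitting of $F$; indecomposability then forces $\L^d(F \cap A) = 0$ or $\L^d(F \cap B) = 0$, so $F$ lies essentially in one of the two pieces. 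The same holds for $G$, and since $\L^d(F \cap G) > 0$ both must lie in the same piece, contradicting non-triviality.

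For existence, I would fix a countable family of ``test splittings'' of $E$ — for instance, those induced by balls with rational centres and radii, which generate the Borel $\sigma$-algebra modulo $\L^d$ — and refine iteratively. At stage $n$, if some current piece $F$ admits a non-trivial admissible splitting induced by one of the first $n$ test sets, replace $F$ by its two halves. Since each admissible splitting preserves the total perimeter by Proposition~\ref{indec-prop}, the sum of perimeters remains equal to $P(E) < \infty$ throughout, the family stays at most countable, and in the limit one obtains an at most countable pairwise essentially disjoint family $\{E_i\}$ with $\L^d(E_i) > 0$ and $\sum_i P(E_i) = P(E)$. Indecomposability of each $E_i$ then follows from the merging lemma combined with the density of the test family: if some $E_i$ admitted a non-trivial admissible splitting, an approximation argument would produce a test-induced non-trivial admissible splitting, contradicting the terminal character of $E_i$.

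For the remaining statements: the covering $\H^{d-1}(E^1 \setminus \bigcup_i E_i^1) = 0$ is precisely the last assertion of Proposition~\ref{indec-prop} applied to $\{E_i\}$. For the absorption claim, let $F \subseteq E$ be indecomposable with $\L^d(F) > 0$; the partition $\{F \cap E_i\}$ of $F$ is admissible, so by indecomposability of $F$ it is $\L^d$-trivial, meaning $F \subseteq E_j$ modulo $\L^d$ for a unique $j \in I$. Uniqueness of the family up to permutation follows by applying the absorption property to each element of any second decomposition: each $E_i'$ of a second decomposition lies in some $E_{\sigma(i)}$, and by symmetry the map $\sigma$ is a bijection onto $I$ with $E_i' = E_{\sigma(i)}$ modulo $\L^d$.

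The main obstacle is making the refinement step of paragraph two fully rigorous. One has to verify simultaneously that the chosen splittings remain admissible throughout a possibly transfinite iteration, that the essential boundaries of distinct pieces stay $\H^{d-1}$-disjoint at every stage, and that the countable family of test splittings is rich enough to ``detect'' every decomposable piece — the last point being what ultimately forces indecomposability of the limiting $E_i$'s and hence the entire structural content of the theorem. The merging lemma itself is elementary but requires careful $\H^{d-1}$-a.e.\ bookkeeping of essential boundaries under the Boolean operations $\cap$ and $\cup$.
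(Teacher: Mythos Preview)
This theorem is not proved in the paper: it is quoted as a preliminary from \cite[Theorem~1]{ACMM_2001}, so there is no in-paper argument to compare against. Assessing your sketch on its own, the merging lemma, the absorption step, and the uniqueness argument are essentially correct, though the claim that an admissible splitting of $F\cup G$ restricts to an admissible splitting of $F$ requires a density case-analysis (via Federer's structure theorem and the hypothesis $\H^{d-1}(\d^M A\cap\d^M B)=0$) that you only gesture at.

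The existence argument, however, has the genuine gap you yourself flag, and it is fatal rather than technical. Ball-induced splittings are almost never admissible: for a generic ball $B$ one has $P(F\cap B)+P(F\setminus B)>P(F)$, because the cut creates new essential boundary along $\partial B\cap F^1$, and this has positive $\H^{d-1}$-measure whenever the sphere $\partial B$ meets $F^1$. Your refinement procedure will therefore simply fail to split most decomposable pieces. The ``approximation argument'' you invoke --- that any admissible splitting of a limiting $E_i$ would be detected by some rational ball --- is unsupported: even when two admissibly-separated pieces happen to be separable by a sphere lying entirely in $(F^1)^c$, there is no reason that sphere has rational centre and radius, and in general the two pieces can be intertwined (touching along an $\H^{d-1}$-null set) in ways no sphere respects. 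Exhibiting a non-trivial admissible splitting of a given decomposable set is precisely the hard content of the existence half of the theorem, and the original proof in \cite{ACMM_2001} proceeds by a different mechanism (a maximality argument over indecomposable subsets, with a compactness step) rather than by refinement against a fixed countable test family.
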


The sets $E_i$ in the statement of Decomposition theorem are called the \emph{$M$-connected components of~$E$.} Accordingly we denote $CC^M(E):=\{E_i\}_{i\in I}.$
By Proposition \ref{indec-prop} we have $\H^{d-1}(\d^M E_i \cap \d^M E_j) = 0$ whenever $i\ne j.$
Furthermore, if $\meas{E} = +\infty$, then there exists a unique $j\in I$ such that $\meas{E_j} = \infty$
(see \cite[Lemma 2.16]{BonicattoGusev2021}).

The statement of Decomposition theorem can be slightly strengthened with the following simple corollary of Proposition~\ref{indec-prop} (see also Equation (10) in \cite[Remark~1]{ACMM_2001}):

\begin{prop}\label{p-decomposition-th+}
Let $E \subseteq \R^d$ be a set with finite perimeter. Let $CC^M(E) = \{E_i\}_{i\in I}$, where $I$ is at most countable. Then $P(\bigcup_{i\in I_1 \cup I_2} E_i) = P(\bigcup_{i\in I_1} E_i) + P(\bigcup_{i\in I_2} E_i)$ for any disjoint sets $I_1, I_2 \subseteq I$.
\end{prop}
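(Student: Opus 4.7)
The plan is to derive this as a direct consequence of Proposition~\ref{indec-prop} applied three times: to the subfamilies $\{E_i\}_{i\in I_1}$, $\{E_i\}_{i\in I_2}$, and $\{E_i\}_{i\in I_1\cup I_2}$. Set $A_k:=\bigcup_{i\in I_k}E_i$ for $k=1,2$ and $A:=A_1\cup A_2=\bigcup_{i\in I_1\cup I_2}E_i$. The goal is to verify
\[
P(A)=P(A_1)+P(A_2).
\]

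The first step is to observe that the full family $\{E_i\}_{i\in I}$ satisfies the hypotheses of Proposition~\ref{indec-prop}: each $E_i$ has finite perimeter, $\sum_i P(E_i)=P(E)<\infty$ by the Decomposition Theorem~\ref{thm:ACMM}, and we may assume each $E_i\ne\R^d$ (the case $|I|=1$ with $E=\R^d$ would make the identity trivial, since then at most one of $I_1,I_2$ is nonempty). Consequently, by the equivalence of (ii) and (iii) in Proposition~\ref{indec-prop}, the additivity $P(E)=\sum_i P(E_i)$ forces condition~(iii): for all $i\ne j$,
\[
|E_i\cap E_j|=0\quad\text{and}\quad \H^{d-1}(\d^M E_i\cap \d^M E_j)=0.
\]

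The second, and entirely routine, step is to notice that condition~(iii) is hereditary under passage to subfamilies. Hence $\{E_i\}_{i\in I_1}$, $\{E_i\}_{i\in I_2}$ and $\{E_i\}_{i\in I_1\cup I_2}$ all satisfy the hypotheses of Proposition~\ref{indec-prop} together with its condition~(iii); accordingly the equivalent condition~(ii) gives
\[
P(A_k)=\sum_{i\in I_k}P(E_i)\quad (k=1,2),\qquad P(A)=\sum_{i\in I_1\cup I_2}P(E_i).
\]
Since $I_1$ and $I_2$ are disjoint, the right-hand side splits as $\sum_{i\in I_1}P(E_i)+\sum_{i\in I_2}P(E_i)$, yielding $P(A)=P(A_1)+P(A_2)$ as required.

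There is no real obstacle here: once Proposition~\ref{indec-prop} is in hand, the only thing to check is that condition~(iii) passes to subfamilies, which is immediate from its pointwise formulation. The minor bookkeeping point, which one should mention for completeness, is the verification that the auxiliary hypotheses ($E_i\ne\R^d$ and $\sum P(E_i)<\infty$) are preserved under restriction to $I_k$ and $I_1\cup I_2$; both are obvious, the latter because a subsum of a convergent series of nonnegative terms is still finite.
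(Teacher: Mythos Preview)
Your proof is correct and matches the paper's approach exactly: the paper states this proposition as ``a simple corollary of Proposition~\ref{indec-prop}'' without spelling out the details, and your argument is precisely the natural way to cash that out.
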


The $M$-connected components of $E^c:= \R^d \setminus E$ which have finite measure are called the \emph{holes of $E$}.
An indecomposable set $E \subset \R^d$ is called \emph{simple} if it has no holes.

For $d>1$ the only simple set with infinite measure is $\R^d$
(modulo $\L^d$). Furthermore, a set $E\subset \R^d$ with finite measure is simple if and only if both $E$ and $E^c$ are indecomposable (see \cite[Proposition~2.17]{BonicattoGusev2021}).

For any indecomposable set $E\subset \R^d$ the \emph{saturation of $E$} (denoted with $\operatorname{sat}(E)$)
is defined as the union of $E$ and its holes. For a generic set $E \subset \R^d$ with finite perimeter
the saturation of $E$ is defined as $\sat(E) := \bigcup_{i\in I} \sat(E_i),$ where $CC^M(E) = \{E_i\}_{i\in I}.$
A set $E \subset \R^d$ with finite perimeter is called \emph{saturated}, if $E = \sat(E).$
Clearly a set $E \subset \R^d$ is simple if and only if it is indecomposable and saturated.

Let us state some properties of holes \cite[Propositions 6 and 9]{ACMM_2001}:

\begin{prop}\label{holes-of-indecomposable-set}
Let $E \subset \R^d$ be an indecomposable set. Then
\begin{enumerate}
 \item[(i)] Any hole of $E$ is a simple set.
 \item[(ii)] $\sat(E)$ is a simple set, $\d^M \sat(E) \subset \d^M E$ (modulo $\H^{d-1}$),
 and $\sat(E)$ has finite measure if $\meas{E} < \infty.$
 \item[(iii)] if $E \subset \sat(F)$, then $\sat(E) \subset \sat(F).$
 \item[(iv)] if $F$ is indecomposable and $\meas{F \cap E} = 0$, then the sets $\sat(E)$ and $\sat(F)$
 are either one a subset of another, or are disjoint.
\end{enumerate}
\end{prop}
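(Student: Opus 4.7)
The plan is to treat the four items in order, exploiting three tools already developed: Proposition~\ref{indec-prop} on perimeter-additive decompositions, the Decomposition Theorem~\ref{thm:ACMM}, and the characterization of simple sets of finite measure as those $A$ for which both $A$ and $A^c$ are indecomposable.

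For (i), let $H$ be a hole of $E$. By definition $H$ is an $M$-connected component of $E^c$ of finite measure, hence indecomposable, so it remains to show $H^c$ is indecomposable. Modulo $\L^d$,
\begin{equation*}
H^c = E \,\cup\, K \,\cup\, \bigcup_{j} H_j,
\end{equation*}
where $\{H_j\}$ enumerates the other holes of $E$ and $K = \sat(E)^c$ is the infinite-measure $M$-connected component of $E^c$ (essentially unique for $d\ge 2$, cf.~\cite[Lemma~2.16]{BonicattoGusev2021}). In any perimeter-additive splitting $H^c = A\sqcup B$ each indecomposable summand $E$, $H_j$, $K$ must lie entirely (mod $\L^d$) in $A$ or in $B$. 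Assume $|B|>0$ and $E\subset A$; then some $P\in\{H_j, K\}$ sits inside $B$. But $\d^M P\subset\d^M E$, and at any $x\in\d^M P$ the densities in $A\supset E$ and in $B\supset P$ are both positive and non-full, so $x\in\d^M A\cap\d^M B$. Hence $\H^{d-1}(\d^M A\cap\d^M B)>0$, contradicting the equivalence (i)$\Leftrightarrow$(iii) of Proposition~\ref{indec-prop} applied to the pair $\{A,B\}$. Thus $H^c$ is indecomposable and $H$ is simple.

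For (ii), applying Theorem~\ref{thm:ACMM} to $E^c$ one identifies its $M$-connected components as the holes $\{H_i\}$ together with the unique infinite-measure component $K=\sat(E)^c$. Proposition~\ref{indec-prop} then yields
\begin{equation*}
\d^M E \;=\; \d^M\sat(E) \,\cup\, \bigcup_i \d^M H_i \pmod{\H^{d-1}},
\end{equation*}
with essentially disjoint union; in particular $\d^M\sat(E)\subset\d^M E$ modulo $\H^{d-1}$. Indecomposability of $\sat(E)$ follows from the same density argument as in (i): a perimeter-additive splitting $\sat(E)=A\sqcup B$ puts $E$ into one piece, say $A$, and any hole $H_i\subset B$ gives $\d^M H_i\subset\d^M A\cap\d^M B$ of positive $\H^{d-1}$-measure, a contradiction. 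Together with the indecomposability of $\sat(E)^c=K$, this makes $\sat(E)$ simple. Finally, when $|E|<\infty$, the bound $\sum_i P(H_i)\le P(E)<\infty$ combined with the isoperimetric inequality $|H_i|\le c_d P(H_i)^{d/(d-1)}$ (with $P(H_i)\to 0$) gives $\sum_i|H_i|<\infty$, hence $|\sat(E)|<\infty$.

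For (iii), if $E\subset\sat(F)$, then by (ii) $\sat(F)^c$ is indecomposable, and since $\sat(F)^c\subset E^c$ it lies (mod $\L^d$) in a single $M$-connected component $K'$ of $E^c$; because $|\sat(F)^c|=\infty$, $K'$ has infinite measure and is not a hole of $E$. Every other $M$-connected component of $E^c$ is disjoint from $\sat(F)^c$, hence is contained in $\sat(F)\setminus E$ and is a hole of $E$ included in $\sat(F)$; so $\sat(E)=E\cup\bigcup_i H_i\subset\sat(F)$. For (iv), since $|F\cap E|=0$ and $F$ is indecomposable, $F$ lies (mod $\L^d$) in a single $M$-connected component of $E^c$: either inside some hole $H_i$ of $E$, whence $F\subset\sat(E)$ and (iii) yields $\sat(F)\subset\sat(E)$, or inside the infinite-measure component, whence $F\cap\sat(E)=\emptyset$. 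Swapping the roles of $E$ and $F$ in the second case produces the trichotomy ``$\sat(F)\subset\sat(E)$, $\sat(E)\subset\sat(F)$, or $\sat(E)\cap\sat(F)=\emptyset$''. The main obstacle throughout is the boundary-density bookkeeping: formally justifying that $\d^M H_i\subset\d^M E$ modulo $\H^{d-1}$, together with $E$ and $H_i$ being assigned to opposite sides of a split, always produces a positive-$\H^{d-1}$ ``internal'' interface $\d^M A\cap\d^M B$. This density argument, combined with the uniqueness of the infinite-measure $M$-connected component of $E^c$ for $d\ge 2$, is the subtlest geometric ingredient.
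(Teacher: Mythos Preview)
The paper does not prove this proposition; it merely quotes it from \cite[Propositions~6 and~9]{ACMM_2001}, so there is no in-paper argument to compare against. Your proof is in the spirit of those arguments (Decomposition Theorem plus boundary-density bookkeeping), and parts (i)--(iii) are essentially correct once one step is made explicit: when you assert that in a perimeter-additive splitting $G=A\sqcup B$ an indecomposable set $E\subset G$ must land entirely in $A$ or in $B$, this is because $CC^M(A)\cup CC^M(B)$ is a perimeter-additive partition of $G$ into indecomposable pieces, hence by the uniqueness clause of Theorem~\ref{thm:ACMM} it coincides with $CC^M(G)$, and the last clause of that theorem then forces $E$ into a single $M$-component. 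You should also note that your use of the ``infinite-measure component $K$ of $E^c$'' tacitly assumes $|E|<\infty$; the case $|E|=\infty$ is trivial, since then $|E^c|<\infty$ by the isoperimetric inequality and $\sat(E)=\R^d$.

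There is, however, a genuine (though easily repairable) gap in your treatment of (iv). In the second case you obtain $F\cap\sat(E)=\emptyset$, and after literally swapping $E$ and $F$ you obtain $E\cap\sat(F)=\emptyset$; but the conjunction of these two facts does \emph{not} yield $\sat(E)\cap\sat(F)=\emptyset$ --- one still has to exclude that some hole of $E$ sits inside a hole of $F$. The clean fix is to exploit (ii) rather than re-running the argument with $E$: once $F\cap\sat(E)=\emptyset$ you have $\sat(E)\subset F^c$, and since $\sat(E)$ is \emph{indecomposable} (by (ii)) it lies in a single $M$-component of $F^c$, which is either a hole of $F$ (giving $\sat(E)\subset\sat(F)$) or the infinite-measure component $\sat(F)^c$ (giving $\sat(E)\cap\sat(F)=\emptyset$). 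This directly produces the trichotomy.
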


\begin{prop}\label{saturation-and-holes}
Let $E \subset \R^d$ be an indecomposable set and let $\{Y_i\}_{i\in I}$ be its holes.
Then we have $E = \sat(E) \setminus \bigcup_{i\in I} Y_i$ and $P(E) = P(\sat(E)) + \sum_{i\in I} P(Y_i)$.
\end{prop}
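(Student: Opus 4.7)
The plan is to reduce both claims to the $M$-decomposition of the complement $E^c$ and to invoke the perimeter additivity provided by Theorem~\ref{thm:ACMM}.

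The first equality is essentially a matter of unwrapping definitions. By the definition of saturation, $\sat(E) = E \cup \bigcup_{i\in I} Y_i$; since each hole $Y_i$ is an $M$-component of $E^c$, the $Y_i$ are disjoint from $E$ and pairwise disjoint modulo $\L^d$ (by Proposition~\ref{indec-prop}(iii) applied to the decomposition of $E^c$). Therefore removing all of them from $\sat(E)$ leaves exactly $E$ modulo $\L^d$.

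For the perimeter identity I would start from $P(E) = P(E^c)$ (immediate from the test-field definition of perimeter by replacing $\bm\fhi$ with $-\bm\fhi$) and apply Theorem~\ref{thm:ACMM} to $E^c$, which has finite perimeter. The isoperimetric inequality (for $d\ge 2$) forces either $\meas{E}$ or $\meas{E^c}$ to be finite. In the generic case $\meas{E}<\infty$, the remark quoted from \cite[Lemma~2.16]{BonicattoGusev2021} yields a unique $M$-component $Z$ of $E^c$ with $\meas{Z}=\infty$, and the remaining $M$-components are, by the definition of hole, precisely the $Y_i$; Theorem~\ref{thm:ACMM} then gives
\[
P(E^c) = P(Z) + \sum_{i\in I} P(Y_i),
\]
and because $\sat(E) = E \cup \bigcup_i Y_i$ is the complement of $Z$ modulo $\L^d$, we have $P(\sat(E)) = P(Z)$, which closes the argument. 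In the degenerate case $\meas{E^c}<\infty$, every $M$-component of $E^c$ is a hole, so $\sat(E) = \R^d$ modulo $\L^d$ and $P(\sat(E)) = 0$; Theorem~\ref{thm:ACMM} then gives $P(E)=P(E^c) = \sum_i P(Y_i) = P(\sat(E)) + \sum_i P(Y_i)$ directly.

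The only (mild) obstacle is to handle this dichotomy cleanly, i.e.\ to recognise that the single infinite-measure $M$-component of $E^c$ (when it exists) is precisely what makes $\sat(E)$ differ from $\R^d$, and that in its absence the saturation fills the whole space and contributes nothing to the perimeter balance. Apart from this bookkeeping, the statement is an immediate consequence of the $M$-decomposition machinery already summarised in Theorem~\ref{thm:ACMM} and Proposition~\ref{indec-prop}.
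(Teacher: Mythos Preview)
Your argument is correct. Note, however, that the paper does not supply its own proof of this proposition: it is quoted as preliminary material from \cite[Propositions~6 and~9]{ACMM_2001}. What you wrote is essentially the natural argument (and in spirit the one in \cite{ACMM_2001}): decompose $E^c$ into its $M$-connected components, identify the unique infinite-measure component $Z$ (when $\meas{E}<\infty$) as the complement of $\sat(E)$, and read off the perimeter identity from Theorem~\ref{thm:ACMM}. Your handling of the degenerate case $\meas{E^c}<\infty$ is also correct, and your appeal to the isoperimetric inequality to justify the dichotomy is the right tool for $d\ge 2$.
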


\subsection{Decomposition of functions of bounded variation}\label{sec:decomposition-into-monotone-functions}

For convenience we will use the following function space (as in \cite{BonicattoGusev2021}):
\begin{equation*}
	\FV(\R^d) :=  \{f \in L^{1^*}(\R^d): V(f)<+\infty\},
\end{equation*}
where $V(f)=V(f, \R^d)$ is the variation of the function $f$ and
\begin{equation*}
	1^* := \begin{cases}
		\frac{d}{d-1}, & d>1 \\
		\infty, & d=1.
	\end{cases}
\end{equation*}

It is easy to see that $\BV(\R^d) \subset \FV(\R^d) \subset \BV_{\loc}(\R^d)$ and both inclusion are strict.
Furthermore, if we endow $\FV(\R^d)$ with the norm $\|f\| := V(f)$
then $\FV(\R^d)$ becomes a Banach space, since $\|f\|_{1^*} \le V(f)$ by the BV embedding theorem (see e.g. \cite[Theorem 3.47]{AFP}).
If $d>1$, then for any $f\in L^{1^*}(\R^d)$ the sets $\{f>t\}$ and $\{f<-t\}$ have finite $\L^d$-measure for any $t>0$,
since $\|f\|_p = \int_0^{+\infty} \meas{\{|f|^p>t\}} \, dt$, $1\le p<+\infty$.

The following definition was introduced by Bianchini and Tonon \cite{BT} for BV functions:

\begin{defn}\label{def-monotone-BT}
	A function $f \in \FV(\R^d)$ is said to be \emph{monotone} if the sets $\{f>t\}$ and $\{f<t\}$ are indecomposable for a.e. $t \in \R$.
\end{defn}

In other words, a function from $\FV(\R^d)$ is monotone, if the sets $\{f>t\}$ and $\{f<-t\}$ are simple for a.e. $t>0$.
There are some other ways to define monotonicity in the case of functions of several variables, see, e.g., \cite{VodGol76} or references at~\cite{BT}, we discuss some equivalent criterion further in Section~\ref{sec-monotonicity-and-continuity}. 
The~next result from our recent article~\cite{GK2025}  is one of the main tools for the present paper. 

\begin{theorem}\label{thm:BT}
	For any $f\in \FV(\R^d)$ there exist at most countable family $\{f_i\}_{i\in I} \subset \FV(\R^d)$
	of monotone functions {\rm(}called \emph{monotone components of $f$}\,{\rm)} such that
	\begin{enumerate}
		\item[(o)] for each $i\in I$ the set $\{|f_i| > 0\}$ has finite measure and either $0\le f_i\le 1$ or $-1\le f_i \le 0$ a.e.;
		\item[(i)] $f = \sum_{i} f_i$ the series converges in $L^{1^*}$\,{\rm)};
		\item[(ii)] $|Df| = \sum_{i} |D f_i|$;
	\end{enumerate}
	If, furthermore, $Df \ll \L^d$, then the functions $f_i$ can be chosen in such a way that additionally
	\begin{enumerate}
		\item[(iii)] $Df_i \ll \L^d$;
		\item[(iv)] if $i\ne j$ then $|Df_i| \perp |Df_j|.$
	\end{enumerate}
\end{theorem}

Really, in~\cite{GK2025} the assertions~(i)--(iv) of the theorem were proved. The simple proof of the additional property~(o) is given below.  
For Lipschitz functions with compact support the corresponding result again with properties (i)--(iv) was mentioned in~\cite{BT}, where it was extended to the BV setting, but without properties (iii) and (iv) (which do not hold in the BV setting in general).
A simpler proof of the result in the BV setting with the properties (i) and (ii) is given in \cite{BonicattoGusev2021}.
Note, that even if $f$ belongs to $W^{1,1}(\R^2)\cap W^{1,2}(\R^2)$, then property~(iv) in Theorem~\ref{thm:BT} does not follow immediately from the other properties.

\begin{proof}[Proof of Theorem~\ref{thm:BT}] Let $f = \sum_{i} f_i$ be a~decomposition of $f$ into the sum of monotone functions~$\{f_i\}_{i\in I} \subset \FV(\R^d)$ satisfying~(i)--(ii) or (i)--(iv) respectively (whose existence was proved in~\cite{GK2025}).
It remains to establish the last property~(o), and the simplest way to do this is to decompose further the functions $f_i$.
Let us fix $i\in I$ and $g:= f_i$.
Since by construction we have either $g \ge 0$ or $g \le 0$ a.e., without loss of generality let us suppose that $g \ge 0$.
Let $\{t_k\}_{k\in \Z}$ be a set of points on $(0,+\infty)$ such that for all $k\in \Z$ we have
\begin{itemize}
	\item $0<t_{k-1}<t_{k}\le t_{k-1}+1$,
	\item $\{g > t_k\}$ is simple (hence $\meas{\{g > t_k\}} < +\infty$).
\end{itemize}
For each $k\in \Z$ let
\begin{equation*}
	\psi_k(t) := \begin{cases}
		t_{k-1}, & t\le t_{k-1}, \\
		t, &t_{k-1} < t < t_{k}, \\
		t_{k}, & t\ge t_{k}.
	\end{cases}
\end{equation*}
Let us decompose $g$ as follows: 
\begin{equation*}
	g(x) = \sum_{k\in \Z} \fhi_k(x), \qquad \fhi_k(x):= \psi_k(g(x)) - t_{k-1}
\end{equation*}
By construction, the functions $\fhi_k$ belong to $\FV(\R^d)$.
By the Coarea formula we have $V(g) = \sum_{k} V(\fhi_k)$.
Further, if $g\in W^{1,1}_\loc(\R^d)$, then by construction of $\psi_k$ we also have $\fhi_k \in W^{1,1}_\loc(\R^d)$.

By definition of $\fhi_k$ we have $0\le \fhi_k \le 1$ and $\{\fhi_k \ne 0\} \subset \{g>t_{k-1}\}$.
The latter implies that $\meas{\{\fhi_k \ne 0\}} < +\infty$.
\end{proof}

Note that if $Df \ll \L^d,$ then by {(ii)} for all $i$ we have $|\nabla f_i| \le |\nabla f|.$
In particular, if $\nabla f \in L^p(\R^d)$ with some $p\in [1,+\infty]$, then for all $i$ we have $\nabla f_i \in L^p(\R^d)$.
Since the sets $\{f_i \ne 0\}$ have finite measures and the functions $f_i$ are bounded, we also have $f_i \in L^p(\R^d)$.
Consequently, if $f\in W^{1,1}(\R^d) \cap W^{1,p}(\R^d)$, then the monotone components $f_i$ belong to $W^{1,1}(\R^d) \cap W^{1,p}(\R^d)$ as well.

Ultimately, if $f$ is compactly supported, then for all $i$ the functions $f_i$ are compactly supported as well
(since $|Df_i| \le |Df| = 0$ outside some ball and $f_i \in L^{1^*}(\R^d)$). Note, that the equality $\|\nabla f\|_1 = \sum_{i} \|\nabla f_i\|_1$ follows from (ii). 

\subsection{Vector measures induced by curves}\label{sec:vector-measures-induced-by-curves}
Let $\gamma \colon [0,1] \to \R^d$ be a continuous function.
If $\gamma$ is Lipschitz, then the curve (parametrized by) $\gamma$ is called \emph{Lipschitz}.
If $\gamma$ is injective on $[0,1)$  with $\gamma(1) \notin \gamma((0,1))$, then the curve $\gamma$ is called \emph{simple}. If $\gamma(0) = \gamma(1)$, then the curve $\gamma$ is called \emph{closed}.
 
Let $\gamma\colon [0,1] \to \R^d$ be a simple (possibly closed) Lipschitz curve such that $\gamma'\ne 0$ a.e.
The inverse map $\gamma^{-1}$ is Borel measurable since $\gamma$ is injective and continuous.
Hence there exists a unit-length Borel vector field $\bm \tau \colon \gamma([0,1]) \to \R^d$
such that
\begin{equation}
\bm\tau(\gamma(s)) = \frac{\gamma'(s)}{|\gamma'(s)|}
\label{canonical-tangent}
\end{equation}
for a.e. $s\in [0,1]$.

By area formula for any $\bm \fhi \in C_c(\R^d)^d$
\begin{equation*}
\int_0^1 \bm \fhi(\gamma(s)) \cdot \gamma'(s) \, ds
=\int_{\R^d} \bm \fhi \cdot \, d\bm \mu_\gamma,
\end{equation*}
where
\begin{equation*}
\bm \mu_\gamma := \bm \tau \H^1\rest \gamma([0,1])
\end{equation*}
is called the \emph{vector measure induced by $\gamma$}
(where $\bm \tau$ is defined in \eqref{canonical-tangent}).

The divergence equation \eqref{div(rho v)=0} can be written as $\dive (\rho \bm \mu) = 0,$
where $\bm \mu = \ve \L^d$. If we replace this measure $\bm \mu$ with a measure induced
by a curve $\gamma,$ then the divergence equation can be solved using
the following result \cite[Theorem 4.9]{BG16}:

\begin{prop}
\label{steady continuity equation along curve}
Let $\gamma\colon [0,1] \to \R^d$ be a closed simple Lipschitz curve such that $\gamma'\ne 0$ a.e. and let $\Gamma:=\gamma([0,1])$.
Let $\rho \in L^1(\H^1\rest \Gamma)$ and let $\nu$ be a Borel measure on $\Gamma$.
Then
\begin{equation}
\div (\rho \bm \mu_\gamma) = \nu
\label{continuity-eq-along}
\end{equation}
(in the sense of distributions) if and only if $\rho \circ \gamma \in L^\infty(\Gamma)$ and
\begin{equation*}
(\rho\circ \gamma)' = \nu \circ \gamma
\end{equation*}
holds in the sense of distributions,
where $\nu \circ \gamma = \gamma^{-1}_\# \nu.$
In particular, $\div (\rho \bm \mu_\gamma) = 0$
if and only if there exists a constant $C\in \R$
such that $\rho\circ \gamma = C$ a.e. on $[0,1]$.
\end{prop}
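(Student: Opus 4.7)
My plan is to translate the distributional equation $\div(\rho\bm\mu_\gamma)=\nu$ on $\R^d$ into an equivalent identity on the circle $\T\simeq[0,1]/\{0,1\}$ via the parametrization $\gamma$, and then to conclude by a density argument on test functions. For any $\varphi\in C_c^\infty(\R^d)$, the definition of $\bm\mu_\gamma=\bm\tau\,\H^1\rest\Gamma$, combined with the area formula and the chain rule for the composition of a smooth function with a Lipschitz map, gives
\begin{equation*}
\langle\div(\rho\bm\mu_\gamma),\varphi\rangle
= -\int_{\R^d}\nabla\varphi\cdot\rho\,d\bm\mu_\gamma
= -\int_0^1 \rho(\gamma(s))\,(\varphi\circ\gamma)'(s)\,ds,
\end{equation*}
while the pushforward convention yields $\langle\nu,\varphi\rangle=\int_0^1(\varphi\circ\gamma)(s)\,d(\nu\circ\gamma)(s)$. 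Hence $\div(\rho\bm\mu_\gamma)=\nu$ is equivalent to
\begin{equation*}
-\int_0^1 \rho(\gamma(s))\,\psi'(s)\,ds = \int_0^1 \psi(s)\,d(\nu\circ\gamma)(s)
\end{equation*}
for every $\psi$ of the form $\varphi\circ\gamma$ with $\varphi\in C_c^\infty(\R^d)$, whereas the distributional ODE $(\rho\circ\gamma)'=\nu\circ\gamma$ on $\T$ is the same identity tested against every $\psi\in C^\infty(\T)$.

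The heart of the proof is the equivalence between these two families of test functions. The direction ``ODE $\Rightarrow$ divergence'' is relatively straightforward: $\psi:=\varphi\circ\gamma$ is Lipschitz and periodic, so it can be approximated by $\psi_n\in C^\infty(\T)$ with $\psi_n\to\psi$ uniformly and $\psi_n'\to\psi'$ pointwise a.e.\ with a uniform $L^\infty$-bound; dominated convergence, combined with $\rho(\gamma(\cdot))\,|\gamma'(\cdot)|\in L^1([0,1])$ from the area formula, propagates the identity from $\psi_n$ to $\psi$. The reverse direction is the main obstacle: given $\psi\in C^\infty(\T)$, I must construct $\varphi_n\in C_c^\infty(\R^d)$ with $\varphi_n\circ\gamma\to\psi$ uniformly and $(\varphi_n\circ\gamma)'\to\psi'$ pointwise a.e., uniformly bounded. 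Because $\gamma$ is merely Lipschitz, no smooth tubular neighborhood of $\Gamma$ is available, and naive mollification of a continuous extension of $\psi\circ\gamma^{-1}$ controls only $C^0$-convergence. One way forward is to invoke the Whitney extension theorem applied to the $1$-jet data $(\psi(s),\,\psi'(s)\bm\tau(\gamma(s)))$ prescribed along $\Gamma$, producing a $C^1$ extension whose tangential derivative on $\Gamma$ matches $\psi'$; truncation and convolution with a mollifier then yield the required sequence. Alternatively, one may cover $\Gamma$ (modulo a negligible set) by finitely many arcs on which $\gamma$ is bi-Lipschitz onto its image (exploiting that $\gamma'\ne 0$ a.e.\ forces one coordinate of $\gamma$ to be strictly monotone on small intervals), straighten each arc by local bi-Lipschitz coordinates, mollify in those coordinates, and glue by a partition of unity.

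Finally, the special case $\nu=0$ follows immediately from the established equivalence: $\div(\rho\bm\mu_\gamma)=0$ becomes $(\rho\circ\gamma)'=0$ as a distribution on $\T$, which forces $\rho\circ\gamma$ to be a.e.\ constant on $[0,1]$. The only genuinely technical step is the Whitney-type extension in the reverse direction; once that is in hand, the rest of the argument is a mechanical application of the area formula, the chain rule, and dominated convergence.
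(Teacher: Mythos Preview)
The paper does not prove this proposition at all: it is quoted verbatim as \cite[Theorem 4.9]{BG16} and used as a black box. So there is no ``paper's own proof'' to compare against; what follows is an assessment of your attempt on its own merits.

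Your translation of the identity via the area formula and the chain rule is correct, and the direction ``ODE $\Rightarrow$ divergence equation'' is handled cleanly: once $\rho\circ\gamma\in\BV(\T)$ with distributional derivative $\nu\circ\gamma$, you may integrate by parts against the Lipschitz test function $\varphi\circ\gamma$ directly, without any approximation. The serious issue is the converse direction, and neither of the two mechanisms you propose actually closes the gap. The Whitney extension theorem for $C^1$ requires \emph{continuous} $1$-jet data satisfying the Whitney remainder condition; your jet $(\psi\circ\gamma^{-1}(x),\,\psi'(\gamma^{-1}(x))\bm\tau(x))$ involves $\bm\tau$, which is only a Borel field defined $\H^1$-a.e.\ on $\Gamma$, so the hypotheses are not met. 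Your alternative---covering $\Gamma$ by finitely many arcs on which some coordinate of $\gamma$ is strictly monotone---rests on a claim that is false for Lipschitz curves: $\gamma'\ne 0$ a.e.\ does not force any coordinate to be monotone on any subinterval (one can build Lipschitz $\gamma_j$ with $\gamma_j'\in\{-1,+1\}$ a.e.\ that are nowhere monotone), and in any case a bi-Lipschitz change of variables does not preserve the $C^\infty_c$ class needed for your test functions. You also never explain how the $L^\infty$ bound on $\rho\circ\gamma$ emerges; this is part of the conclusion, and it does not come for free from $\rho\in L^1(\H^1\rest\Gamma)$.

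A route that does work (and is closer in spirit to what is done in \cite{BG16}) avoids building a $C^1$ extension altogether. From the divergence identity you already have $\int_\T(\rho\circ\gamma)(\varphi\circ\gamma)'\,ds=-\int_\T(\varphi\circ\gamma)\,d(\nu\circ\gamma)$, so the linear functional $\varphi\circ\gamma\mapsto\int_\T(\rho\circ\gamma)(\varphi\circ\gamma)'\,ds$ is bounded by $\|\varphi\circ\gamma\|_\infty\,|\nu|(\Gamma)$. Since $\gamma$ is a homeomorphism onto $\Gamma$, Stone--Weierstrass gives that $\{\varphi\circ\gamma:\varphi\in C_c^\infty(\R^d)\}$ is uniformly dense in $C(\T)$; combined with the bound just obtained, this is enough to identify the distributional derivative of $\rho\circ\gamma$ with $\nu\circ\gamma$ once one upgrades the class of admissible $\psi$ on the left-hand side. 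The upgrade is the genuine work: one shows that the derivatives $(\varphi\circ\gamma)'=\nabla\varphi(\gamma)\cdot\gamma'$ are rich enough (for instance, they contain $g(\gamma)\cdot\gamma'$ for every $g\in C(\Gamma;\R^d)$ by approximating $g$ with $\nabla\varphi|_\Gamma$, and these suffice to test the derivative of an $L^1$ function on $\T$). This is where the $L^\infty$ bound and the ODE come out simultaneously, and it is the step your proposal is missing.
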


Now let us turn to the two-dimensional setting ($d=2$), in which the essential boundaries of sets with finite perimeter and also the right-hand side of \eqref{Gauss-Green} can be described more precisely using curves (see Propositions~\ref{generalized-inner-normal-for-simple-set} and~\ref{derivative of indicator of a set of finite perimeter}).

Let $\gamma \colon [0,1]\to \R^2$ be a closed simple curve.
By the Jordan curve theorem the open set $\R^2 \setminus \gamma([0,1])$ has exactly two connected components (with common boundary $\gamma([0,1])$), just one of which is bounded.
Let us denote it with $I_\gamma$ (interior), and let $E_\gamma := \R^2 \setminus \overline{I_\gamma}$ denote the other one (exterior).

It follows from the Jordan curve theorem that if $U \subset \R^2$ is a bounded open set and $\d U$ is a closed simple curve, then $U$ can be recovered from $\d U$.
A similar result holds for sets with finite perimeter, whose essential boundary is a closed simple Lipschitz curve (up to $\H^1$-negligible subsets):

\begin{lemma}\label{Jordan-interior}
	Let $E \subset \R^2$ be a set with strictly positive finite measure and finite perimeter.
	Let $\gamma\colon [0,1] \to \R^2$ be a Lipschitz curve such that $\gamma(0) = \gamma(1)$, $\gamma'\ne 0$ a.e. and $\gamma$ is injective on $[0,1)$. Then
	\begin{equation}
		\label{E-eq-interior}
		E = I_\gamma \mod \L^2
	\end{equation}
	holds if and only if
	\begin{equation}
		\label{essential-boundary-is-curve}
		\d^M E = \Gamma \mod \H^1,
	\end{equation}
	where $\Gamma = \gamma([0,1])$.
\end{lemma}

The above technical assertion is almost evident and can be easily deduced from the well-known facts of real analysis, so we move its proof  to Appendix~\ref{appendix-connectedness-lemma}.

The structure of the essential boundary of a simple set with finite measure was revealed in \cite[Theorem 7]{ACMM_2001}:

\begin{theorem}\label{boundary-of-simple-plane-sets}
	Let $E \subset \R^2$ be a set with finite and strictly positive measure and finite perimeter.
	Then $E$ is simple if and only if there exists a closed simple Lipschitz curve $\gamma\colon [0,1] \to \R^2$ such that \eqref{essential-boundary-is-curve} holds.
\end{theorem}

\begin{proof}
	\emph{Necessity} was proved in \cite[Theorem 7]{ACMM_2001}.
	
	\emph{Sufficiency.} By Lemma~\ref{Jordan-interior} we have $E = I_\gamma \mod \L^2$.
	And since $I_\gamma$ and $E_\gamma$ are open and connected, by Theorem~\ref{indecomposablity-criterion-for-open-sets} both of them are indecomposable.
	Hence $I_\gamma$ (and consequently $E$) is simple.
\end{proof}

If $d=2$ and the set $E$ is simple, then right-hand side of \eqref{Gauss-Green} can be described more precisely:

\begin{prop}\label{generalized-inner-normal-for-simple-set}
If $E \subset \R^2$ is a simple set, then there exists a closed simple Lipschitz curve ${\gamma \colon [0,1] \to \R^2}$ such that 
$\gamma'(s)\ne0$ a.e.,  
\begin{equation}
\d^M E = \gamma([0,1]) \quad \text{{\rm modulo} $\H^1$},
\label{essential-boundary-of-a-simple-set}
\end{equation}
and
\begin{equation}
\bm \nu(\gamma(s)) = \left(\frac{\gamma'(s)}{|\gamma'(s)|}\right)^\perp
\label{generalized normal and tangent vector}
\end{equation}
for a.e. $s\in[0,1],$ where $\bm \nu$ is the generalized inner normal to $E$ and $a^\perp = (-a_2, a_1)$ for any $a=(a_1, a_2).$
In particular,
\begin{equation*}
D \1_E = (\bm \mu_\gamma)^\perp.
\end{equation*}
\end{prop}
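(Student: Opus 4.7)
The plan is to first produce a Lipschitz Jordan parameterization of $\d^M E$ up to an $\H^1$-negligible set, and then to verify that the induced unit tangent field is, after a suitable orientation choice, the $\pi/2$-rotation of the generalized inner normal $\bm\nu_E$.

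First, I would invoke the structural results of Ambrosio--Caselles--Masnou--Morel for simple planar sets of finite perimeter (this is the planar ``Jordan theorem for BV sets'' from \cite{ACMM_2001}, following Corollary~1 therein and the subsequent discussion): the essential boundary of such a set coincides, modulo $\H^1$, with the image of a closed simple Lipschitz curve $\gamma\colon[0,1]\to\R^2$ satisfying $\gamma'\neq 0$ a.e. This immediately yields \eqref{essential-boundary-of-a-simple-set} and is the deepest ingredient of the proof; no further topology is needed in what follows. I expect this step to be the main obstacle (or rather, the only nontrivial input), since it relies on the fact that $E$ has no holes and is indecomposable, together with the planar topology; in higher dimensions such a parameterization of course need not exist.

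Second, I would reduce the remaining claims to a pointwise identification of the normal with the rotated tangent. By the Gauss-Green formula \eqref{Gauss-Green} and the fact that $\d^*E=\d^M E$ modulo $\H^1$, one has $D\1_E = \bm\nu_E\,\H^1\rest\d^M E$, while by definition $(\bm\mu_\gamma)^\perp = \bm\tau^\perp\,\H^1\rest\gamma([0,1])$. Both measures are concentrated on the same $\H^1$-finite set and have unit-length Borel densities, so it suffices to prove $\bm\nu_E(\gamma(s)) = \bm\tau(\gamma(s))^\perp$ for $\H^1$-a.e. $s$. At $\H^1$-a.e. point of $\d^*E$, the reduced boundary admits an approximate tangent line orthogonal to $\bm\nu_E$; at $\H^1$-a.e. point of $\gamma([0,1])$, the injective Lipschitz curve $\gamma$ has a classical tangent direction $\gamma'(s)/|\gamma'(s)| = \bm\tau(\gamma(s))$. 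By uniqueness of the (approximate) tangent line up to sign, $\bm\nu_E(\gamma(s)) = \pm\bm\tau(\gamma(s))^\perp$ for a.e. $s\in[0,1]$.

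Finally, I would fix the sign. Since $E$ is simple, it is indecomposable with finite measure, so by the Jordan property of $\gamma$ the set $E$ agrees modulo $\L^2$ with one of the two connected components of $\R^2\setminus\gamma([0,1])$. Replacing $\gamma$ by its reversal $s\mapsto\gamma(1-s)$ if necessary (which negates $\gamma'$ and hence $\bm\tau^\perp$), we may assume that $E$ lies to the left of $\gamma$ at some regular point, where $\bm\tau^\perp$ then points into $E$. A connectedness argument—using that $\bm\nu_E\cdot\bm\tau^\perp\in\{\pm 1\}$ $\H^1$-a.e. on $\gamma([0,1])$ and that a local sign flip would force $D\1_E$ to undergo a cancellation incompatible with $E$ being a single connected component of $\R^2\setminus\gamma([0,1])$—shows that the sign is globally constant. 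Therefore $\bm\nu_E = \bm\tau^\perp$ $\H^1$-a.e. on $\gamma([0,1])$, which is \eqref{generalized normal and tangent vector}, and the identity $D\1_E = (\bm\mu_\gamma)^\perp$ follows by substitution in the Gauss-Green formula.
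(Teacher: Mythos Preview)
The paper does not actually prove this proposition; it simply records that ``the proof of this result is contained in the proof of \cite[Proposition~6.1]{BonicattoGusev2021}.'' Your outline is the natural reconstruction of that argument and is correct in its overall structure: the existence of the Jordan parameterization of $\partial^M E$ for a simple planar set is exactly the content of \cite[Corollary~1]{ACMM_2001}, and the identification $\bm\nu_E = \pm\bm\tau^\perp$ $\H^1$-a.e.\ follows from De~Giorgi's blow-up theorem together with the a.e.\ existence of the classical tangent to the Lipschitz curve.

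The only soft spot is your sign-fixing step. The phrase ``a local sign flip would force $D\1_E$ to undergo a cancellation incompatible with $E$ being a single connected component'' is suggestive but not a proof; indecomposability of $E$ does not by itself rule out $\bm\nu_E\cdot\bm\tau^\perp$ changing sign along $\gamma$. A cleaner way to finish is to note that for a closed simple Lipschitz curve one has, by the planar Gauss--Green (or Stokes) formula, $(\bm\mu_\gamma)^\perp = D\1_G$ where $G$ is the bounded Jordan region of $\gamma$ (for one orientation of $\gamma$). Since $E$ is simple with $\partial^M E = \gamma([0,1])$ modulo $\H^1$, one has $E=G$ modulo $\L^2$, hence $D\1_E = (\bm\mu_\gamma)^\perp$ globally after reversing $\gamma$ if necessary; the pointwise identity \eqref{generalized normal and tangent vector} then follows by comparing densities. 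This replaces the connectedness heuristic with a single application of Stokes and avoids any pointwise sign argument.
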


The proof of this result is contained in the proof of \cite[Proposition 6.1]{BonicattoGusev2021}, but we present it below in Appendix~\ref{appendix-connectedness-lemma} for completeness and a reader's convenience.
We will refer to the function $\gamma$ given by Proposition~\ref{generalized-inner-normal-for-simple-set}
as to the \emph{canonical parametrization of $\d^M E$} (note that $\gamma$ is not unique). For a generic set with finite perimeter Proposition \ref{generalized-inner-normal-for-simple-set}
can be extended as follows (cf. \cite[Theorem 4]{ACMM_2001}):

\begin{prop}\label{derivative of indicator of a set of finite perimeter}
Let $E \subset \R^2$ be a set of finite perimeter.
Then there exists at most countable family of closed simple Lipschitz curves $\gamma_i \colon [0,1] \to \R^2,$ such that
\begin{equation*}
D \1_E = \sum_{i} (\bm\mu_{\gamma_i})^\perp,
\qquad
|D \1_E| = \sum_{i} |\bm\mu_{\gamma_i}|,
\end{equation*}
and
\begin{equation*}
|\bm \mu_{\gamma_i}| \perp |\bm \mu_{\gamma_j}| = 0
\end{equation*}
whenever $i\ne j.$
\end{prop}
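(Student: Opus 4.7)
The plan is to reduce to the already-established case of simple sets (Proposition~\ref{generalized-inner-normal-for-simple-set}) by decomposing $E$ all the way down to simple components. First I would apply Theorem~\ref{thm:ACMM} to write $E=\bigsqcup_{j\in J}E_j$ as the at most countable disjoint union of its $M$-connected components, with $P(E)=\sum_j P(E_j)$. Since $\1_E=\sum_j \1_{E_j}$ (pointwise $\L^2$-a.e.) and the essential boundaries $\d^M E_j$ are pairwise $\H^1$-disjoint by Proposition~\ref{indec-prop}(iii), we get $D\1_E=\sum_j D\1_{E_j}$ together with $|D\1_E|=\sum_j |D\1_{E_j}|$ and pairwise mutual singularity of these summands. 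Next, for each indecomposable $E_j$, I would invoke Proposition~\ref{saturation-and-holes} to obtain the holes $\{Y_{j,k}\}_k$ and write
\begin{equation*}
\1_{E_j}=\1_{\sat(E_j)}-\sum_k \1_{Y_{j,k}},\qquad
P(E_j)=P(\sat(E_j))+\sum_k P(Y_{j,k}),
\end{equation*}
so that
\begin{equation*}
D\1_{E_j}=D\1_{\sat(E_j)}-\sum_k D\1_{Y_{j,k}},
\end{equation*}
and by Proposition~\ref{holes-of-indecomposable-set} each of the sets $\sat(E_j)$ and $Y_{j,k}$ is simple. Equality in the total-variation triangle inequality (forced by the perimeter identity) then yields mutual singularity of $|D\1_{\sat(E_j)}|$ and the $|D\1_{Y_{j,k}}|$.

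Second, to every simple set $F$ appearing above (i.e.\ $\sat(E_j)$ and each $Y_{j,k}$) I would apply Proposition~\ref{generalized-inner-normal-for-simple-set} to obtain a canonical parametrization $\gamma_F\colon[0,1]\to\R^2$ satisfying $D\1_F=(\bm\mu_{\gamma_F})^\perp$, and consequently $|D\1_F|=|\bm\mu_{\gamma_F}|=\H^1\rest\gamma_F([0,1])$ since $\gamma_F([0,1])=\d^M F$ modulo~$\H^1$. For the saturations I keep the given orientation, while for each hole I replace $\gamma$ by the reverse parametrization $\tilde\gamma(s):=\gamma(1-s)$, so that $\bm\mu_{\tilde\gamma}=-\bm\mu_\gamma$; this absorbs the minus sign in the formula for $D\1_{E_j}$ above.

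Finally, I re-enumerate the entire collection of curves as $\{\gamma_i\}_i$ (still countable, as a countable union of countable families), and conclude
\begin{equation*}
D\1_E=\sum_j D\1_{E_j}
     =\sum_j\Bigl(D\1_{\sat(E_j)}+\sum_k(-D\1_{Y_{j,k}})\Bigr)
     =\sum_i(\bm\mu_{\gamma_i})^\perp,
\end{equation*}
while convergence is provided by $\sum_i|\bm\mu_{\gamma_i}|(\R^2)=\sum_j\bigl(P(\sat(E_j))+\sum_k P(Y_{j,k})\bigr)=\sum_j P(E_j)=P(E)<\infty$. The singularity $|\bm\mu_{\gamma_i}|\perp|\bm\mu_{\gamma_j}|$ for $i\ne j$ follows by combining the mutual singularity established in the two decomposition steps above (the $\d^M E_j$ are $\H^1$-disjoint across $j$, and inside each $E_j$ the essential boundaries of $\sat(E_j)$ and of the $Y_{j,k}$ are $\H^1$-disjoint). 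The identity $|D\1_E|=\sum_i|\bm\mu_{\gamma_i}|$ is then immediate from this singularity. The only real bookkeeping obstacle is the $\H^1$-disjointness of $\d^M\sat(E_j)$ from the $\d^M Y_{j,k}$, which is handled by the perimeter equality in Proposition~\ref{saturation-and-holes} via the equality case of the triangle inequality for total variations.
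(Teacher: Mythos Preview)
Your proposal is correct and follows essentially the same approach as the paper: decompose $E$ into $M$-connected components via Theorem~\ref{thm:ACMM}, then decompose each component into its saturation and holes via Proposition~\ref{saturation-and-holes}, observe that all the resulting pieces are simple (Proposition~\ref{holes-of-indecomposable-set}), and apply Proposition~\ref{generalized-inner-normal-for-simple-set} to each. The paper's proof is slightly terser and begins by reducing to the case $\L^2(E)<\infty$ via the isoperimetric inequality (so that all saturations are genuine bounded simple sets rather than $\R^2$); your version handles the orientation reversal for holes and the mutual-singularity bookkeeping more explicitly, but the structure is identical.
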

Again, the accurate proof of the last assertion is contained in Appendix~\ref{appendix-connectedness-lemma}.

\begin{defn}\label{generalized-connected-components}
Given a set $E \subset \R^2$ with finite perimeter,
let
\begin{equation*}
\cd^M E := \{\gamma_i\}
\end{equation*}
denote the family of the curves $\gamma_i$ given by Proposition~\ref{derivative of indicator of a set of finite perimeter}.
\end{defn}

We will call the curves $\gamma_i\in \cd^M E$ the \emph{generalized connected components of $\d^M E$}.
From Proposition~\ref{derivative of indicator of a set of finite perimeter} we immediately deduce the following:

\begin{cor}\label{structure-of-essential-boundary-2d}
	For any set $E \subset \R^2$ with finite perimeter we have
	\begin{equation}
		\d^M E = \bigcup_{\gamma \in \cd^M E} \gamma([0,1]) \mod \H^1.
	\end{equation}
	Furthermore, the sets $\gamma([0,1])$ {\rm(}for $\gamma \in \cd^M E${\rm)} are pairwise disjoint, up to $\H^1$-negligible subsets.
\end{cor}

\begin{proof}
	On one hand, we have $|D\1_E| = \H^1 \rest \d^M E$.
	On the other hand, by Propositions~\ref{derivative of indicator of a set of finite perimeter} and~\ref{generalized-inner-normal-for-simple-set}
	we have 
	\begin{equation}
		|D\1_E| = \sum_{\gamma \in \cd^M E} |\bm\mu_{\gamma}| = 
		\sum_{\gamma \in \cd^M E} |\bm\mu_{\gamma}|
		= \H^1 \rest \bigcup_{\gamma \in \cd^M E} \gamma([0,1]),
	\end{equation}
	since for all distinct $\gamma$ and $\theta$ from $\cd^M E$ by Proposition~\ref{derivative of indicator of a set of finite perimeter} we have $\H^1 \rest \gamma([0,1]) \perp \H^1 \rest\theta([0,1])$, and therefore $\gamma([0,1])\cap \theta([0,1]) = \emptyset \mod \H^1$.
\end{proof}

Let us state an immediate consequence of Theorem~\ref{boundary-of-simple-plane-sets}:

\begin{cor}\label{gcc-of-simple-set}
A set $E \subset \R^2$ with strictly positive finite measure and finite perimeter is simple if and only if $\cd^M E$ consists of a single curve.
\end{cor}

\subsection{Regular values of Sobolev functions on the plane}\label{sec:reg-val}
Let us collect some important properties of the level sets $\{f=t\}$ of a function $f\in W^{1,1}(\R^2)$, which hold for a.e. $t\in \R$.

\begin{defn}\label{def-reg}
Let $f\in W^{1,1}(\R^2)$.
A~number $t\in \R$ is called \emph{a~regular value\footnotemark{}}  of~$f$, if
\begin{enumerate}
 \item the set $\{f>t\}$ has finite perimeter;
 \item $\nabla f(x) \ne 0$ for $\H^1$-a.e. $x\in \{f=t\}$;
 \item $\d^M \{f>t\} = \{f=t\}$ modulo $\H^1$ and the equality
$
\bm \nu_{\{f>t\}} = \frac{\nabla f}{|\nabla f|}
$ holds $\H^1$-a.e. on $\{f=t\}$.
\end{enumerate}
Let $\reg f$ denote the set of all regular values of $f.$
For any $t\in \reg f$ the set
\begin{equation*}
\{f=t\}^* := \bigcup_{\gamma \in \cd^M \{f>t\}} \gamma([0,1])
\end{equation*}
will be called the \emph{essential level set of $f$}.
(Here $\cd^M \{f>t\}$ is the set of
the generalized connected components of $\d^M \{f>t\}$, see Definition~\ref{generalized-connected-components}.)
\end{defn}
\footnotetext{The standard definition of the term \emph{regular value}
(that the differential of $f$ is surjective at every preimage of $t$, i.e., in the sense of Morse-Sard) is not used in this text.}

By Corollary~\ref{structure-of-essential-boundary-2d} for any $t\in \reg f$ we have $\{f=t\} = \{f=t\}^* \mod \H^1$.
In particular, the set $\{f=t\}^*$ is empty if and only if $\H^1(\{f=t\})=0$;
in this case we call the regular value $t$ \emph{degenerate}.

\begin{prop}
\label{reg-vals}
If $f \in W^{1,1}(\R^2)$, then a.e. $t\in \R$ is a regular value of $f$.
\end{prop}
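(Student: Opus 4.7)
The plan is to observe that the three requirements in Definition~\ref{def-reg} have already been shown, individually, to hold for almost every $t\in\R$, so the conclusion follows by intersecting three sets of full Lebesgue measure on $\R$.

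First I would handle condition~(1). Since $W^{1,1}(\R^2)\subset \BV(\R^2)$ with $|Df|=|\nabla f|\,\L^2$, the BV coarea formula gives
\begin{equation*}
\int_\R P(\{f>t\})\,dt \;=\; |Df|(\R^2) \;=\; \int_{\R^2} |\nabla f|\,dx \;<\; +\infty,
\end{equation*}
so $P(\{f>t\})<+\infty$ for a.e.\ $t\in\R$; call this set $R_1$.

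Next, condition~(2) splits into two parts. Proposition~\ref{constancy on reduced boundary} (applied with $d=2$) yields a set $R_2\subset\R$ of full measure on which $\d^M\{f>t\}=\{f=t\}$ modulo $\H^1$. Remark~\ref{rem-gen-inner-normal} yields a set $R_3\subset\R$ of full measure on which $\H^1(\{\nabla f=0\}\cap \{f=t\})=0$ and, at $\H^1$-a.e.\ point of $\d^M\{f>t\}$, the Federer identification $\bm\nu_{\{f>t\}}=\nabla f/|\nabla f|$ is valid. For $t\in R_1\cap R_2\cap R_3$ all the clauses in Definition~\ref{def-reg} are satisfied, so $t\in\reg f$; since $R_1\cap R_2\cap R_3$ has full measure in $\R$, the proposition follows.

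There is essentially no obstacle, since each of the three ingredients — the coarea bound on $P(\{f>t\})$, the identification of the essential boundary with the level set, and the computation of the generalized inner normal via $\nabla f$ — has already been isolated as a statement valid for a.e.\ $t$. The only point that deserves a brief word in the write-up is that combining them is legitimate because the intersection of countably many (in fact, three) subsets of $\R$ of full Lebesgue measure is again of full measure.
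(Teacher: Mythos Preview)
Your proof is correct and follows exactly the same route as the paper's own argument: item~(1) from the coarea formula, and item~(2) from Proposition~\ref{constancy on reduced boundary} together with Remark~\ref{rem-gen-inner-normal}, then intersecting the resulting full-measure sets. The paper's proof is simply a two-line version of what you wrote.
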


\begin{proof}
The item (1) from Definition~\ref{def-reg} holds for a.e. $t$ by the Coarea formula.
The items (2)--(3) hold by Proposition~\ref{constancy on reduced boundary}
and Proposition~\ref{gen-inner-normal}.
\end{proof}

\begin{rem}\label{non-deg-rv-of-monotone-fn}
	In particular, if $f\in W^{1,1}(\R^2)$ is monotone, then for any non-degenerate regular value $t$ the level set $\{f=t\}$ coincides with a closed simple Lipschitz curve (up to $\H^1$-negligible subsets).
	If, in addition, $f\in W^{1,2}(\R^2)$, then this property can be strengthened, as we will see in Proposition~\ref{almost-all-level-sets-are-essential}.
\end{rem}

Further we need also the following useful

\begin{prop}[{\bf on existence of a suitable parametrization}]\label{existence-of-good-parametrization}
If $f\in W^{1,1}(\R^2)$ is monotone, then for a.e. $y\in\reg f$ there exists an injective absolutely continuous parametrization $\gamma \colon [0,L_y) \to \R^2$
of the closed curve $\{f=y\}^*$ such that $\gamma'\ne 0$ a.e. and
\begin{equation}
\label{good-parametrization-f}
\gamma'(s) = \nabla^\perp f(\gamma(s)) \qquad\text{for a.e. } s\in[0,L_y).
\end{equation}
\end{prop}

In the case when $f$ is Lipschitz, such parametrizations are Lipschitz as well, cf. \cite[Lemma 2.11 (ii)]{ABC_2014}.
However, when $f$ is Sobolev, such parametrizations may fail to be Lipschitz since $\nabla f$ may be unbounded. Proposition~\ref{existence-of-good-parametrization} can be easily deduced from the definition of regular values (see also the previous Remark\,) and some well-known classical facts of one-dimensional Real Analysis, see Appendix~\ref{appendix-connectedness-lemma} for the~accurate proof with details.

\section{Sobolev functions with parallel gradients}\label{sec:parallel-gradients}

In contrast to the previous section (where some standard facts from geometric analysis were mostly collected, except for Theorem~\ref{thm:BT}), this part is more interesting in a sense: here we start to develop some basic tools, which allow us to establish further the main results of the paper, namely, Theorem~\ref{chain-rule} in the next Section~\ref{sec:crp} and Theorem~\ref{uniqueness-criterea} in Section~\ref{sec:uniq}  (after the further refine development of the present technique in Sections~\ref{section-fine-properties}--\ref{sec:wsp}). 

Let us start from the proof that if $f,g\in W^{1,1}(\R^2) \cap W^{1,2}(\R^2)$ and $\nabla f \parallel \nabla g$,
then the function $f$ is constant along every generalized connected component of $\d^M \{g>t\}$
for a.e. $t\in\R$ (in the Lipschitz setting some close results were obtained in \cite[\S 4.2 and Lemma 5.1]{BBG2016}): 

\begin{lemma}\label{lemma-constancy-along-connected-components}
	Suppose that for $f, g \in W^{1,1}(\R^2)\cap W^{1,2}(\R^2)$ we have $\nabla f \parallel \nabla g$ a.e.
	Then for a.e. regular value $t$ of $g$ and for every $\gamma\in \cd^M\{g>t\}$ there exists a constant $c=c(t, \gamma)\in \R$
	such that $f(x) = c$ for all $x\in \gamma([0,1])$ that are Lebesgue points of $f$ {\rm(}in particular, for $\H^1$-a.e. $x\in \gamma([0,1])${\rm)}.
\end{lemma}

\begin{proof}
	For $\eps>0$ let $f_\eps = f * \omega_\eps$ denote the convolution of $f$ with the standard mollification kernel $\omega_\eps.$
	If $x\in \R^2$ is a Lebesgue point of $f$, then $f_\eps(x) \to f(x)$ as $\eps \to 0$.
	Since a.e. $x\in \R^2$ is a Lebesgue point of $f$, by the Coarea formula
	for a.e. $t\in \R$ we have that $\H^1$-a.e. $x\in \{g=t\}$ is a Lebesgue point of $f$.
	Hence for a.e. regular value $t$ of $g$ and for all $\gamma \in \cd^M \{g>t\}$ we have
	\begin{equation}
		f_\eps\circ \gamma \to f\circ \gamma\qquad
		\mbox{for }\H^1\mbox{-a.e. } x\in \gamma[0,1]. \label{convergence of mollifications on curve}
	\end{equation}
	as $\eps \to 0$.
	Then by lower semicontinuity of total variation (see e.g. \cite[Remark 3.5]{AFP})
	\begin{equation*}
		\TV(f\circ \gamma) \le \liminf_{\eps\to 0} \TV(f_\eps \circ \gamma).
	\end{equation*}
	Furthermore, if $x_0 = \gamma(s_0)$ and $x = \gamma(s)$ are Lebesgue points of $f$,
	and if $f(x_0) = c$, then
	\begin{equation*}
		|f(x) - c| = \liminf_{\eps\to 0} |f_\eps(\gamma(s)) - f_\eps(\gamma(s_0))| \le \liminf_{\eps\to 0} \TV(f_\eps \circ \gamma).
	\end{equation*}
	Hence
	\begin{equation*}
		\sup_{x} |f(x) - c| \le \liminf_{\eps\to 0} \TV(f_\eps \circ \gamma),
	\end{equation*}
	where the supremum is taken over all Lebesgue points $x$ of $f$ such that $x\in \gamma([0,1])$.
	
	Since
	\begin{equation*}
		\sum_{\gamma \in \cd^M \{g>t\}} \liminf_{\eps\to 0} \TV(f_\eps \circ \gamma)
		\le
		\liminf_{\eps\to 0} \sum_{\gamma \in \cd^M \{g>t\}} \TV(f_\eps \circ \gamma) =: F(t),
	\end{equation*}
	it is sufficient to prove that $F(t) = 0$ for a.e. $t$.
	
	The function $F$ is measurable as the lower limit of the sequence of functions
	\begin{equation*}
		F_\eps(t) = \sum_{\gamma \in \cd^M \{g>t\}} \TV(f_\eps \circ \gamma),
	\end{equation*}
	which are measurable by the Coarea formula.
	Then by Fatou lemma,
	\begin{equation*}
		I:= \int F(t) \, dt
		= \int \liminf_{\eps\to 0} F_\eps(t) \, dt
		\le \liminf_{\eps\to 0} \int F_\eps(t) \, dt.
	\end{equation*}
	Since $f_\eps$ is smooth, for any $\gamma \in \cd^M \{g>t\}$ we have
	\begin{equation*}
		\TV(f_\eps \circ \gamma) = \int_0^1 \left|\frac{d}{ds} f_\eps(\gamma(s))\right|\, ds
		=\int_0^1 |\nabla f_\eps(\gamma(s)) \cdot \gamma'(s)| \, ds =: J_\gamma.
	\end{equation*}
	By \eqref{generalized inner normal for Sobolev function}, \eqref{generalized normal and tangent vector},  and by the area formula we have
	\begin{equation*}
		J_\gamma = \int_0^1 \left|\nabla f_\eps(\gamma(s)) \cdot \frac{\nabla^\perp g(\gamma(s))}{|\nabla g (\gamma(s))|}\right| \cdot |\gamma'(s)| \, ds = \int \frac{|\nabla f_\eps \cdot \nabla^\perp g|}{|\nabla g|} d\H^1 \rest \gamma([0,1]).
	\end{equation*}
	Hence
	\begin{equation*}
		F_\eps(t)
		= \int \frac{|\nabla f_\eps \cdot \nabla^\perp g|}{|\nabla g|} \, d\H^1\rest \{g=t\}.
	\end{equation*}
	
	Therefore, again using the Coarea formula for $g$, we get
	\begin{equation*}
		\int F_\eps(t) \, dt \le \int \int \frac{|\nabla f_\eps \cdot \nabla^\perp g|}{|\nabla g|} \, d\H^1\rest \{g=t\} \, dt
		= \int_{\R^2} |\nabla f_\eps \cdot \nabla^\perp g| \, dx =: I_\eps.
	\end{equation*}
	Since $\nabla f_\eps \to \nabla f$ in $L^2$ as $\eps \to 0$, we have
	\begin{equation*}
		I_\eps \to \int_{\R^2} \nabla f \cdot \nabla^\perp g \, dx =0.
	\end{equation*}
	Thus $I=0$.
	Hence the equality $f(x)=c$ holds for $\H^1$-a.e. $x\in \gamma([0,1])$.
\end{proof}

Let us mention that Lemma~\ref{lemma-constancy-along-connected-components} will be considerably strengthened in Section~\ref{section-fine-properties}. Below we will use the following convenient agreement: 

\begin{defn}
	The constant $c(t,\gamma)$ in Lemma~\ref{lemma-constancy-along-connected-components} is called the \emph{essential value of $f$ on $\gamma$}.
\end{defn}

The condition $\nabla f \parallel \nabla g$ holds in particular when $|Df| \perp |Dg|$.
In this case we can also say that for almost all regular values $t$ of $g$
the essential values of $f$ on each $\gamma \in \cd^M \{g>t\}$ are not regular:

\begin{lemma}\label{lemma-mutual-weak-Sard-property}
	Suppose that for $f, g \in W^{1,1}(\R^2)\cap W^{1,2}(\R^2)$ we have $|Df| \perp |Dg|$.
	Then there exists a negligible set 
	$N \subset \reg g$
	such that for all $t\in \reg g \setminus N$ and 
    for all $\gamma \in \cd^M \{g>t\}$ the essential value of $f$ on $\gamma$
    does not belong to $\reg f$.
In particular,
	\begin{equation}
		\label{almost-no-regular-values}
		f(x) \notin \reg f \quad \text{for } \H^1\text{-a.e. } x\in \{g=t\}.
	\end{equation}
	We also have
	\begin{equation*}
		f_\# (\L^2 \rest \{\nabla g \ne 0\}) \perp \L^1
	\end{equation*}
	and
	\begin{equation}\label{nut-2}
		f_\# |Dg| \perp \L^1.
	\end{equation}
\end{lemma}

\begin{proof}
	By Lemma~\ref{lemma-constancy-along-connected-components}, there exists
	a negligible set $N_g \subset \R$ such that $N_g^c \subset \reg g$ and for all $t\in N_g^c$ and 
	for any $\gamma \in\cd^M \{g>t\}$ there exists a constant $c=c(t,\gamma)$
	such that $f(x) = c(t,\gamma)$ for $\H^1$-a.e. $x\in \gamma([0,1])$.
	
	Let $A$ denote the set of all $t\in N_g^c$ for which there exists $\gamma \in\cd^M \{g>t\}$
	such that $c(t,\gamma) \in \reg f$.
	It is easy to see that $A=A'$, where $A'$ is  set of all $t\in N_g^c$ such that $\H^1(\{g=t\}\cap f^{-1}(\reg f))>0$.
	Indeed, clearly $A \subset A'$. 
	On the other hand, up to $\H^1$-negligible subsets $\{g=t\}$ is the (at most countable) union of the sets $\gamma([0,1])$ for all $\gamma \in \cd^M\{g>t\}$. 
	So, if $s \in A'$, then there exists $\gamma \in \cd^M \{g>t\}$ such that $\H^1(\gamma([0,1])\cap f^{-1}(\reg f))>0$. But $f(x)=c(t, \gamma)$ holds for $\H^1$-a.e. $x\in \gamma([0,1])$, and in particular for $\H^1$-a.e. $x\in \gamma([0,1])\cap f^{-1}(\reg f)$.
	Then there exists $x\in \gamma([0,1])$ such that $f(x) = c(t, \gamma)$ and $f(x) \in \reg f$.
	Hence $A' \subset A$.
	
	By Definition~\ref{def-reg} of regular values, we have $\nabla f(x) \ne 0$ for $\H^1$-a.e. $x\in \{f=c\}$, hence for $\H^1$-a.e. $x\in \gamma([0,1])$.
	Then 
	\begin{equation}
		\H^1(\{\nabla f\ne 0\} \cap \{g=t\}) \ge \H^1(\{\nabla f \ne 0\} \cap \gamma([0,1]))=\H^1(\gamma([0,1]))>0.
	\end{equation}
	Hence, the set $A$ is contained in the set
	\begin{equation*}
		B:= \{t\in \reg g \;|\; \H^1(\{\nabla f\ne 0\} \cap \{g=t\})>0\}.
	\end{equation*}
	By the Coarea formula,
	\begin{equation*}
		\int \H^1(\{\nabla f \ne 0\} \cap \{g=t\}) \, dt = \int_{\{\nabla f\ne 0\}} |\nabla g| \, dx = 0,
	\end{equation*}
	since $|Df| \perp |Dg|.$
	Hence, $\L^1(B) = 0$ and, consequently, $\L^1(A) = 0.$
	
	Let $N:=N_g \cup A.$ 
	Then for any $t\in \reg g \setminus N$ on the set $\{g=t\}^*$ the function $f$ takes only non-regular values, up to $\H^1$-negligible subsets, i.e., the property~\eqref{almost-no-regular-values} is established.
	
	We need to check other two properties. Let $R \subset \reg f$ be a Borel set such that $\L^1(R^c) = 0$ and let $M = f^{-1}(R) \cap \{\nabla g \ne 0\}$. 
	By \eqref{almost-no-regular-values} for a.e. $t\in \R$ we have
	\begin{equation}
		\label{H1Mcapgs}
		\H^1(M\cap \{g=t\}) = 0.
	\end{equation}
	Then by the Coarea formula
	\begin{equation}
		\int_{M} |\nabla g| \, dx = \int_\R \H^1(M\cap \{g=t\}) = 0,
	\end{equation}
	hence $\L^2(M) = 0$. Then
	\begin{equation}
		f_\# (\L^2 \rest \{\nabla g \ne 0\})
		= f_\# (\L^2 \rest \{\nabla g \ne 0\} \setminus M) \perp \L^1,
	\end{equation}
	since $f(\{\nabla g \ne 0\} \setminus M) \subset R^c$ and $\L^1(R^c)=0$.
\end{proof}

Note that \eqref{almost-no-regular-values} implies that for a.e. $s\in \reg f$ and all $t\in \reg g$ we have
\begin{equation}
   	\label{almost-disjoint-levelsets}
   	\{f=s\} \cap \{g=t\} = \emptyset \mod \H^1,
\end{equation}
which has a clear geometric interpretation: almost all level sets of $f$ and $g$ are almost disjoint.
However \eqref{almost-no-regular-values} is slightly stronger than \eqref{almost-disjoint-levelsets}, in particular \eqref{H1Mcapgs} does not follow immediately from~\eqref{almost-disjoint-levelsets} (due to uncountable union of the level sets in the definition of $M$).

Let us mention that we will strengthen Lemma~\ref{lemma-mutual-weak-Sard-property} in Section~\ref{sec:wsp} (see Proposition~\ref{mutual weak Sard property}).

\section{Chain rule for the divergence operator and Nelson's conjecture}\label{sec:crp}

Throughout this section we assume that $f\in W^{1,2}(\R^2)$ and $\ve = \nabla^\perp f$
is compactly supported. (Consequently, $f\in W^{1,1}(\R^2)$ as well.)

\begin{prop}\label{div-decomp}
	Let $\{f_i\}$ be the decomposition of the stream function $f$ into monotone parts.
	Then $\rho\in L^\infty(\R^2)$ solves $\dive(\rho \ve) = 0$
	if and only if for any $i$
	\begin{equation}
		\dive (\rho \nabla^\perp f_i) = 0.
		\label{comp}
	\end{equation}
\end{prop}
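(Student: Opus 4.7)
The implication ``(\ref{comp}) for all $i$ $\Rightarrow$ $\dive(\rho\ve)=0$'' is immediate: by Theorem~\ref{thm:BT} the supports $S_i := \{\nabla f_i \ne 0\}$ are pairwise $\L^2$-disjoint and $\nabla f = \sum_i \nabla f_i$ a.e.\ and in $L^2(\R^2)$, so $\rho\ve = \sum_i \rho\ve_i$ in $L^2_\loc(\R^2)$; summing the equations (\ref{comp}) against any $\phi\in C_c^\infty(\R^2)$ then gives $\dive(\rho\ve) = 0$.

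For the converse, the key a.e.\ identity is $\ve\cdot\nabla f_i = \nabla^\perp f\cdot\nabla f_i = 0$, which holds because on $S_i$ the disjointness of supports forces $\nabla f = \nabla f_i$, whence the expression reduces to $\nabla^\perp f_i\cdot\nabla f_i = 0$, whereas off $S_i$ the factor $\nabla f_i$ itself vanishes. I plan to test the equation $\dive(\rho\ve) = 0$ against functions of the form $\phi\cdot\psi(f_i)$ with $\phi\in C_c^\infty(\R^2)$ and $\psi\in C_c^1(\R)$: these lie in $W^{1,2}(\R^2)$ with compact support (by the Sobolev chain rule), and since $\rho\ve\in L^2(\R^2)$ is compactly supported they are admissible by a standard mollification argument. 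The chain rule $\nabla(\phi\psi(f_i)) = \psi(f_i)\nabla\phi + \phi\psi'(f_i)\nabla f_i$ combined with the identity above kills the second term, leaving
\begin{equation*}
\int_{\R^2}\psi(f_i)\,\rho\ve\cdot\nabla\phi\,dx = 0 \qquad \forall\phi\in C_c^\infty(\R^2),\;\forall\psi\in C_c^1(\R).
\end{equation*}
Since the finite signed Borel measure $(f_i)_\#\bigl((\rho\ve\cdot\nabla\phi)\L^2\bigr)$ on $\R$ thus annihilates $C_c^1(\R)$, it must be zero, and the identity persists with $\psi$ replaced by $\1_A$ for every Borel set $A\subseteq\R$.

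The final task is to select $A\subseteq\R$ so that $\1_A(f_i) = \1_{S_i}$ modulo $\L^2$. For every $j\in I\setminus\{i\}$ the mutual singularity $|Df_i|\perp|Df_j|$ provided by Theorem~\ref{thm:BT}(iv) allows invoking Proposition~\ref{mutual weak Sard property} on the pair $(f_i,f_j)$: this produces a Borel $\L^1$-null set $C_{ij}\subseteq\R$ with $f_i(S_j)\subseteq C_{ij}$ up to $\L^2$-null sets. Put $C := \bigcup_{j\ne i}C_{ij}$ (still $\L^1$-null, as $I$ is at most countable) and $A := \R\setminus C$. Then $\1_A(f_i)$ vanishes $\L^2$-a.e.\ on each $S_j$ with $j\ne i$, while the coarea formula applied to $f_i$, combined with $|\nabla f_i|>0$ on $S_i$ and $\L^1(C)=0$, yields $\L^2(\{f_i\in C\}\cap S_i) = 0$, so $\1_A(f_i)=1$ a.e.\ on $S_i$. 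Since $\ve$ is concentrated on $\bigcup_k S_k$ and $\ve_i = \1_{S_i}\ve$ a.e., substituting $\psi=\1_A$ into the displayed identity delivers precisely $\int\rho\ve_i\cdot\nabla\phi\,dx = 0$. The delicate point of the argument is this very last step, the fabrication of the Borel set $A$: it is precisely here that the mutual weak Sard property (Proposition~\ref{mutual weak Sard property}) plays a crucial role, transferring the mutual singularity $|Df_i|\perp|Df_j|$ from $\R^2$ to the range of $f_i$; the rest of the proof reduces to the Sobolev chain rule and the coarea formula.
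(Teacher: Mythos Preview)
Your proof is correct and shares the paper's core ingredients: the test functions $\phi\,\psi(f_i)$, the pointwise identity $\ve\cdot\nabla f_i=0$ (from the disjointness of the supports $S_k$), and the crucial appeal to Proposition~\ref{mutual weak Sard property}. The organisation of the endgame, however, differs. The paper splits $\ve=\nabla^\perp f_i+\nabla^\perp(f-f_i)$, applies coarea to the first term and the pushforward argument to the second, and shows that the two resulting measures on $\R$ are mutually singular (one absolutely continuous, the other singular with respect to $\L^1$), forcing both to vanish; the equation for $f_i$ is then recovered by integrating the coarea identity back over $t$. You instead keep $\ve$ intact, deduce that the pushforward $(f_i)_\#\bigl((\rho\ve\cdot\nabla\phi)\L^2\bigr)$ is the zero measure, and then evaluate it on a single carefully built Borel set $A=\R\setminus C$ whose preimage under $f_i$ picks out exactly $S_i$ modulo $\L^2$. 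Your route is a touch more direct (no coarea disintegration at the final step), while the paper's route makes the two competing contributions visible and foreshadows the more elaborate splitting needed later in Proposition~\ref{div-wsp-rhs}; both arguments rest on precisely the same use of Proposition~\ref{mutual weak Sard property}.
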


\begin{proof}
	Sufficiency follows from linearity of the operator $\ve \mapsto \dive(\rho \ve)$ and passage to the limit. It remains to prove necessity. Let us fix $i$ and set $h:= f_i$ and $g:=f-h$.
	Since $\ve \in L^2(\R^2)$, we can write the distributional formulation of \eqref{div(rho v)=0}
	using compactly supported test functions from $W^{1,2}(\R^2).$
	In particular, we can consider test functions of the form
    \begin{equation}
		\phi(x) = \fhi(x) \psi(h(x)),
		\label{test-form-22}
\end{equation}	
	where $\fhi$ and $\psi$ are some compactly supported smooth functions. Since $\nabla^\perp f \cdot \nabla h = 0$ a.e., for any such test function $\phi$ we have
	\begin{equation}
		\int \rho \nabla^\perp h \cdot (\nabla \fhi) \psi(h)  \, dx
		+ \int \rho \nabla^\perp g \cdot (\nabla \fhi) \psi(h) \, dx = 0.
		\label{disint-div-eq}
	\end{equation}

	By the Coarea formula the first term in \eqref{disint-div-eq} can be written as
	\begin{equation}
		\int \rho \nabla^\perp h \cdot (\nabla \fhi) \psi(h) \, dx
		= \int a(t) \psi(t) \, dt,
	\end{equation}
	where
	\begin{equation*}
		a(t):=\int \rho \bm \tau \cdot \nabla \fhi
		\, d\H^1 \rest \{h = t\},\qquad \ \bm \tau:=\frac{\nabla^\perp h}{|\nabla h|}
	\end{equation*}
	Let us consider the second term in \eqref{disint-div-eq}.
	Let $\mu:= \rho \nabla^\perp g \cdot \nabla \fhi \L^2.$
	By the change of variables formula
	\begin{equation*}
		\int \rho \nabla^\perp g \cdot (\nabla \fhi) \psi(h) \, dx
		= \int_{\R^2} \psi(h) \, d\mu
		= \int_\R \psi(t) d (h_\# \mu)(t).
	\end{equation*}
	Therefore, \eqref{disint-div-eq} takes the form
	\begin{equation}
		\int a(t) \psi(t) \, dt + \int \psi(t) d\, (h_\# \mu)(t) = 0.
		\label{tmargin}
	\end{equation}
	Passing to the limit we note that \eqref{tmargin} holds for any compactly supported continuous  function $\psi.$ Hence by Riesz theorem
	\begin{equation}
		a \L^1 + h_\# \mu = 0.
		\label{Riesz-corollary}
	\end{equation}
	Since $|Dh| \perp |Dg|$, 
	by  Lemma~\ref{lemma-mutual-weak-Sard-property} 
	we have $h_\# |Dg| \perp \L^1.$
	By definition of $\mu$ we also have $|\mu| \ll |D g|.$
	Hence, $h_\# |\mu| \perp \L^1$ and, consequently, $h_\# \mu \perp \L^1.$
	In particular,  the measures $a \L^1$ and $h_\# \mu$ are mutually singular. Therefore, \eqref{Riesz-corollary}
	holds if and only if $a \L^1 = 0$ and $h_\# \mu = 0.$
	Hence, $a(t)=0$ for a.e. $t\in \R.$
	
	The above argument can be repeated for any test function $\fhi$ from some countable dense subset $D$ of $C_0^\infty(\R^2)$.
	Hence, there exists a negligible set $N \subset \R$ (which does not depend on $\fhi$)
	such that for all $t\in \R \setminus N$ and for all $\fhi \in D$ the equality $a(t)=0$ holds.
	Passing to the limit, for any such $t$ we also get the equality $a(t)=0$ for all $\fhi \in C_0^\infty(\R^2).$
	Then, from the identity $\int a(t) \, dt = 0$, by the Coarea formula we obtain \eqref{comp}.
\end{proof}

\begin{prop}\label{div-monotone-no-rhs}
	Suppose that $f\in W^{1,2}(\R^2)$ is monotone (and compactly supported).
	Then $\rho\in L^\infty(\R^2)$ solves
	\begin{equation}
		\div (\rho \nabla^\perp f) = 0
		\label{steady-contin}
	\end{equation}
	if and only if for a.e. $t$ the function $\rho$ is constant $\H^1$-a.e. on $\{f=t\}$.
\end{prop}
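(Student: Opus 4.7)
The plan is to disintegrate the equation onto the level sets and use the fact that, by monotonicity of $f$, almost every level set is (modulo $\H^1$) a single closed simple Lipschitz curve to which Proposition~\ref{steady continuity equation along curve} can be applied.

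For the sufficiency, assume $\rho$ is constant $\H^1$-a.e. on $\{f=t\}$ for a.e. $t$, fix a test function $\fhi\in C_0^\infty(\R^2)$, and apply the coarea formula:
\begin{equation*}
\int \rho\,\nabla^\perp f\cdot\nabla\fhi\,dx
=\int\!\Bigl(\int \rho\,\tfrac{\nabla^\perp f}{|\nabla f|}\cdot\nabla\fhi\,d\H^1\rest\{f=t\}\Bigr)dt.
\end{equation*}
For a.e.\ regular $t$, monotonicity of $f$ implies that $\{f>t\}$ is a simple set, and hence by Proposition~\ref{generalized-inner-normal-for-simple-set} the set $\{f=t\}^*=\d^M\{f>t\}$ (which, by Proposition~\ref{almost-all-level-sets-are-almost-essential}, agrees modulo $\H^1$ with $\{f=t\}$) is the image of a single closed simple Lipschitz curve $\gamma_t$ whose tangent is (up to sign) $\nabla^\perp f/|\nabla f|$. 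Since $\rho$ is constant along $\gamma_t$, the inner integral equals $\pm c(t)\int_0^1(\fhi\circ\gamma_t)'(s)\,ds=0$ by the closedness of $\gamma_t$, so $\dive(\rho\nabla^\perp f)=0$.

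For the necessity, use test functions of the form $\fhi(x)\psi(f(x))$ with $\fhi\in C_0^\infty(\R^2)$ and $\psi\in C_0^\infty(\R)$; since $\nabla^\perp f\cdot\nabla f=0$ a.e., these are admissible and yield
\begin{equation*}
0=\int \rho\,\nabla^\perp f\cdot\nabla\fhi\;\psi(f)\,dx
=\int \psi(t)\,a_\fhi(t)\,dt,
\qquad
a_\fhi(t):=\int \rho\,\tfrac{\nabla^\perp f}{|\nabla f|}\cdot\nabla\fhi\,d\H^1\rest\{f=t\},
\end{equation*}
by the coarea formula. Arbitrariness of $\psi$ gives $a_\fhi(t)=0$ for a.e.\ $t$; choosing $\fhi$ in a countable dense subset of $C_0^\infty(\R^2)$ and then passing to the limit, we obtain a single negligible set $N\subset\R$ off which $a_\fhi(t)=0$ holds for every $\fhi\in C_0^\infty(\R^2)$.

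Now fix a regular value $t\notin N$. Since $f$ is monotone, both $\{f>t\}$ and $\{f<t\}$ are indecomposable, so $\{f>t\}$ is simple; by Proposition~\ref{generalized-inner-normal-for-simple-set} there is a canonical parametrization $\gamma_t\colon[0,1]\to\R^2$ of $\d^M\{f>t\}$ such that $\bm\mu_{\gamma_t}=\pm\tfrac{\nabla^\perp f}{|\nabla f|}\H^1\rest\{f=t\}$ (the sign depending on the orientation convention). The identity $a_\fhi(t)=0$ for all $\fhi\in C_0^\infty(\R^2)$ is precisely the distributional statement $\dive(\rho\,\bm\mu_{\gamma_t})=0$ on $\R^2$, so Proposition~\ref{steady continuity equation along curve} yields that $\rho\circ\gamma_t$ is constant a.e.\ on $[0,1]$, i.e.\ $\rho$ is constant $\H^1$-a.e.\ on $\{f=t\}$, as required.

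The main technical point is the reduction, for a.e.\ $t$, of the level set to a single closed simple Lipschitz curve with controlled tangent vector; this rests crucially on the monotonicity of $f$ together with Propositions~\ref{generalized-inner-normal-for-simple-set} and~\ref{almost-all-level-sets-are-almost-essential}, and is what allows Proposition~\ref{steady continuity equation along curve} to be invoked.
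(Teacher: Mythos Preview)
Your proof is correct and follows essentially the same route as the paper: disintegrate via the coarea formula with test functions of the form $\fhi(x)\psi(f(x))$, use a countable dense family of $\fhi$'s to obtain a single exceptional null set, and then apply Proposition~\ref{steady continuity equation along curve} on each regular level set, which by monotonicity is a single closed simple Lipschitz curve. Your write-up is in fact more explicit than the paper's (which compresses the argument into a few lines and only parenthetically mentions the countable-density step), and you spell out the role of Propositions~\ref{generalized-inner-normal-for-simple-set} and~\ref{almost-all-level-sets-are-almost-essential} that the paper leaves implicit.
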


\begin{proof} We follow ideas from the previous proof. Again, we consider the distributional formulation of~\eqref{steady-contin} with test functions of the form~\eqref{test-form-22}, 
that means 
\begin{equation}
		\int \rho \nabla^\perp f \cdot (\nabla \fhi) \psi(f)  \, dx=0
		\label{disint-div-eq-new}
	\end{equation}
	for any compactly supported smooth functions $\fhi, \psi$.  (Note, that here we essentially use the assumption~$f\in W^{1,2}(\R^2)$.) 
	As we  already saw, by Coarea formula for any such pair $\fhi, \psi$ we have
	\begin{equation*}
		\int \rho \nabla^\perp f \cdot (\nabla \fhi) \psi(f) \, dx
		= \int \int \rho \bm \tau \cdot \nabla \fhi \, d\H^1 \rest \{f = t\} \psi(t) \, dt.
	\end{equation*}
	Therefore,  again by Riesz theorem \eqref{disint-div-eq-new} holds iff 
	\begin{equation*}
		\div (\rho \bm \tau \H^1 \rest \{f=t\}) = 0
	\end{equation*}
	for a.e. $t\in\R$.  (Strictly saying, here we need to consider a~countable dense family of functions $\fhi$.) By Proposition \eqref{steady continuity equation along curve}, the above equation holds if and only if $\rho$ is constant $\H^1$-a.e. on $\{f=t\}$.
\end{proof}

Since any compactly supported $\ve \in L^p(\R^2)$ with $p\ge 2$ belongs to $L^2(\R^2)$,
as a corollary of the two Propositions above we immediately obtain the second part of Theorem~\ref{chain-rule}.

To prove the first part of Theorem~\ref{chain-rule},
one can consider the following minor modification of the vector field constructed by E. Nelson (cf. \cite{Aizenman_1978}):

\begin{prop}\label{ex-n}
	There exists a compactly supported vector field $\ve \colon \R^2\to \R^2$
	such that $\dive \ve = 0$, furthermore, $\ve \in L^p(\R^2)$ for any $p\in[1,2)$, 
	and $\ve$ does not have the chain rule property.
\end{prop}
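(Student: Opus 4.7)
The plan is to produce an explicit counterexample by modifying Nelson's classical vortex (cf.\ \cite{Nelson_1963, Aizenman_1978} and \cite[p. 541--542]{DiPernaLions1989}). I would start from a compactly supported stream function $f\in W^{1,p}(\R^2)\setminus W^{1,2}(\R^2)$ whose gradient has an isolated logarithmic-type singularity at the origin, so that $|\nabla f(x)|\sim 1/|x|$ near $0$. The associated field $\ve=\nabla^\perp f$ is then compactly supported, distributionally divergence-free, and lies in $L^p(\R^2)$ for every $p<2$ but not in $L^2(\R^2)$.

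The heart of the matter is to exhibit $\rho\in L^\infty(\R^2)$ and $\beta\in C^1(\R)$ with $\dive(\rho\ve)=0$ but $\dive(\beta(\rho)\ve)\neq 0$. The reason such a construction is possible precisely for $p<2$ is visible in the proof of the positive direction of Theorem~\ref{chain-rule}: Proposition~\ref{div-decomp} crucially uses $\ve\in L^2$ to plug test functions of the form $\varphi(x)\psi(f(x))\in W^{1,2}(\R^2)$ into the weak formulation of \eqref{div(rho v)=0}, a step which enforces the decomposition of the equation into the pieces $\dive(\rho\nabla^\perp f_i)=0$ for each monotone component $f_i$ of $f$, and, via Proposition~\ref{div-monotone-no-rhs}, forces $\rho$ to be constant on almost every level set of each $f_i$. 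When $p<2$ the enlargement of the test-function class fails, and the equation leaves room for $\rho$ to be genuinely non-constant along the level structure of~$f$.

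I expect the concrete counterexample to be built either as a superposition of countably many vortex-type singularities at dyadic scales $|x_n|\sim 2^{-n}$ (in the spirit of \cite{Aizenman_1978}) or as a single carefully chosen vortex coupled with a non-radial density~$\rho$, designed so that the only obstruction to $\dive(\rho\ve)=0$ reduces to a distributional atom at the singular set whose mass is an angular (or multi-scale) average of~$\rho$. Choosing $\rho$ so that this average vanishes will yield $\dive(\rho\ve)=0$; for a generic nonlinear $\beta$ (for instance $\beta(s)=s^2$) the corresponding average of $\beta(\rho)$ will not vanish, producing a Dirac-type defect $\dive(\beta(\rho)\ve)=c\,\delta_0\neq 0$ and thus violating CRP$_\infty$.

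The hardest part will be the joint verification of three delicate constraints: $\ve$ must be divergence-free as a distribution on the whole plane (which rules out naive spiraling fields of the form $(\alpha x-y,\alpha y+x)/|x|^2$, whose distributional divergence contains a $2\pi\alpha\delta_0$ contribution at the origin); the identity $\dive(\rho\ve)=0$ must be achieved through an \emph{exact} cancellation at the singular set, which rigidly restricts the admissible profiles of~$\rho$; and, finally, $\beta\circ\rho$ must destroy this cancellation. The paper's description of the example as a \emph{minor modification} of Nelson's construction signals that a short and transparent such example does exist, but the simultaneous calibration of $\ve$, $\rho$ and $\beta$ is the delicate technical point of the proof.
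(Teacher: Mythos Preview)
Your plan is correct and follows essentially the same route as the paper: a single Nelson-type vortex with stream function $f(x,y)=\tfrac{\pi}{4}-\arctan(x/y)$ on a triangle (extended by reflection to make $\ve$ compactly supported with two singular points), a density $\rho\in\{-1,0,+1\}$ constant on angular sectors so that the Dirac contributions at the singularities cancel in $\dive(\rho\ve)$, and $\beta(s)=s^2$ so that $\rho^2$ loses this cancellation and produces $\dive(\rho^2\ve)=\delta_{(0,2)}-\delta_{(0,0)}$. One small simplification over your outline: since $f\in W^{1,1}_{\loc}(\R^2)$ is bounded, $\dive(\nabla^\perp f)=0$ holds automatically as a distribution on all of $\R^2$, so the delicate point you flag about avoiding a Dirac in $\dive\ve$ itself does not arise---the only Dirac bookkeeping is in $\dive(\rho\ve)$.
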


\begin{figure}[h]
	\begin{subfigure}[h]{0.37\linewidth}
	\begin{tikzpicture}[scale=2,every node/.style={scale=1}]
		\tikzmath{
			\sqrtthree=sqrt(3);
		}
		\fill[color=green!20] (0,0) -- (1/\sqrtthree,1) -- (0,2);
		\fill[color=green!20, pattern color=green!80!black!30!white, pattern=north east lines] (0,0) -- (1/\sqrtthree,1) -- (0,2);
		\fill[color=red!20] (0,0) -- (\sqrtthree,1) -- (0,2) -- (1/\sqrtthree,1);
		\fill[color=green!20, pattern color=red!80!black!30!white, pattern=north west lines] (0,0) -- (\sqrtthree,1) -- (0,2) -- (1/\sqrtthree,1);
		
		\draw[->, very thin] (-0.2,0) -- (2,0) node[above] {$x$};
		\draw[->, very thin] (0,0) -- (0,2.4) node[right] {$y$};
		\node[below] at (0,0) {$0$};
		\node[below] at (\sqrtthree,0) {$\sqrt{3}$};
		\node[above left] at (0,1) {$1$};
		\node[above left] at (0,2) {$2$};
		
		\draw[thin, densely dotted] (\sqrtthree,0) -- (\sqrtthree,1);
		\node[below] at (1/\sqrtthree,0) {$1/\sqrt{3}$};
		\draw[thin, densely dotted] (1/\sqrtthree,0) -- (1/\sqrtthree,1);
		\draw[thin, densely dotted] (0,1) -- (\sqrtthree,1);
		
		\begin{scope}[decoration={
				markings,
				mark=at position 0.5 with {\arrow{latex}}}
			]
			\tikzmath{
				let \muls{one} = 0.2; let \muls{two} = 0.45; let \muls{three} = 0.8;
			}
			\foreach \x in {one,two,three}{
				\tikzmath{
					\tgc=\muls{\x}/sqrt(3);
					\y=\muls{\x}*100;
					\xtgc=(sqrt(3) - \tgc)/(1+sqrt(3)*\tgc);
				}
				\draw[postaction={decorate}] (0,2)--(\tgc,1);
				\draw[postaction={decorate}] (0,2)--(\xtgc,1);
				\draw[postaction={decorate}] (\tgc,1)--(0,0);
				\draw[postaction={decorate}] (\xtgc,1)--(0,0);
			}
		\end{scope}
	\end{tikzpicture}
	\caption{The vector field $\ve$ and the density $\rho$. The sets $\{\rho=+1\}$ and $\{\rho=-1\}$ are hatched with green and red respectively.\label{stream-function1}}
	\end{subfigure}
	\qquad
	\begin{subfigure}[h]{0.45\linewidth}
		\begin{tikzpicture}[scale=2,every node/.style={scale=1}]
			\tikzmath{
				\sqrtthree=sqrt(3);
			}
			\draw[->, very thin] (-1.05*\sqrtthree,0) -- (1.05*\sqrtthree,0) node[above] {$x$};
			\draw[->, very thin] (0,0) -- (0,2.4) node[right] {$y$};
			\node[below] at (0,0) {$0$};
			\node[above left] at (0,1) {$1$};
			\node[above left] at (0,2) {$2$};
			\draw[thin, densely dotted] (-\sqrtthree,1) -- (\sqrtthree,1);
			\node[below] at (1/\sqrtthree,0) {\vphantom{$1/\sqrt{3}$}};
		\begin{scope}
			\tikzmath{
				let \cols{one} = red; let \cols{two} = green!40!black; let \cols{three} = blue;
				let \muls{one} = 0.9; let \muls{two} = 0.6; let \muls{three} = 0.3;
				let \alpha{one} = 30; let \alpha{two} = 60; let \alpha{three} = 90;
			}
			\foreach \idx in {one,two,three}{
				\tikzmath{
					\x=\muls{\idx}*sqrt(3);
					let \col=\cols{\idx}!\alpha{\idx}!white;
				}
				\draw[\col] (0,2)--(\x,1)--(0,0)--(-\x,1)--cycle;
			}
		\end{scope}
		\end{tikzpicture}
		\caption{Stream function $f$ of $\ve$. The level sets corresponding to different values of $f$ are depicted with different colors and intensities. \label{stream-function2}}
	\end{subfigure}
	\caption{Nelson's example (a minor modification).}
	\label{fig:no-chain-rule}
\end{figure}

\begin{proof}
	Let us construct the stream function $f\colon \R^2 \to \R$ as follows.
	If $x\ge0$ and $0\le y\le 1$, then we set
	\begin{equation}\label{Nelson-stream-function}
		f(x,y):=\begin{cases}
			\frac{\pi}{3}-\arctg \frac{x}{y}, & y\ge x/\sqrt{3}>0, \\
			0, & y<x/\sqrt{3}.
		\end{cases}
	\end{equation}
	For $x>0$ and $y\in (1,2)$ we extend the function $f$ by symmetry with respect to $y=1$:
	$f(x,y) := f(x,2-y)$. We also extend $f$ by symmetry with respect to $x=0,$
	i.e., $f(x,y):=f(-x,y)$ if $x<0$ and $y\in(0,2).$
	Ultimately, we set $f(x,y)=0$ if $y\notin(0,2)$.
	
	The vector field $\ve$ is then defined as $\ve = \nabla^\perp f$.
	If $x>0$ and $x<y<1$, then
	\begin{equation*}
		\ve(x,y) =
		\begin{bmatrix}
			-\d_y f\\
			\d_x f
		\end{bmatrix}
		=
		-
		\begin{bmatrix}
			\frac{x}{x^2+y^2} \\
			\frac{y}{x^2+y^2}
		\end{bmatrix},
		\qquad\ |\ve| = \frac1{\sqrt{{x^2} + {y^2}}}.
	\end{equation*}
	Therefore $\ve \in L^p(\R^2)$ (with $p>1$) iff $p<2$. 
	
	Next for any $0\le\alpha<\beta<+\infty$ we define $\rho_{\alpha, \beta}(x,y)$ as follows: if $x>0$ and $0<y<1$, then we set
	\begin{equation*}
		\rho_{\alpha,\beta}(x,y) :=
		\begin{cases}
			1, &\alpha y < x < \beta y, \\
			0, & \text{otherwise}.
		\end{cases}
	\end{equation*}
	Further, for $x>0$ and $1<y<2$ we extend $\rho_{\alpha,\beta}$ by symmetry  with respect to $y=1$, namely 
	$\rho_{\alpha,\beta}(x,y):=\rho_{\alpha,\beta}(x,2-y)$. Finally, we put $\rho_{\alpha,\beta}(x,y)=0$ for $x<0$ or $y\notin (0,2)$. 
	One can easily check that 
	\begin{equation*}
		\dive(\rho_{\alpha, \beta} \ve) = (\arctg \beta - \arctg \alpha) (\delta_{(0,2)} - \delta_{(0,0)}),
	\end{equation*}
	where $\delta_{(\xi, \eta)}$ is the Dirac delta at $(\xi, \eta)$.
	
	Now consider $\rho:= \rho_{0, 1/\sqrt{3}}-\rho_{1/\sqrt{3},\sqrt{3}}$. By linearity of the divergence operator
	we have $\dive (\rho \ve) = 0$, but $\rho^2 = \rho_{0,\sqrt{3}}$, and, consequently, $\dive(\rho^2 \ve) = \frac{\pi}{3}(\delta_{(0,2)} - \delta_{(0,0)}) \ne 0.$
\end{proof}

\begin{rem}
	By Propositions \ref{removable-singularity} and \ref{constancy-on-connected-sets},
	the situation described in the example above (see Figure~\ref{fig:no-chain-rule})
	is not possible for $f\in W^{1,2}(\R^2)$ (i.e., when $\ve \in L^2(\R^2)$).
\end{rem}

We conclude this section with the proof of Proposition~\ref{main-I}.

\begin{proof}[Proof of Proposition~\ref{main-I}]\label{Nelson-conj-2d}
	By \cite[section 4]{ABC_2013_LipSard}, there exists a Lipschitz function $f\colon \R^2 \to \R$
	not having the weak Sard property. 
	One can choose a smooth cutoff function $\zeta$ such that $\zeta \cdot f$ still does not have the weak Sard property (it is sufficient that $\zeta=1$ on a sufficiently large ball).
	Replacing $f$ with $\zeta \cdot f$, if necessary, we can achieve that $\nabla f$ is compactly supported.
	Then $\ve := \nabla^\perp f$ is divergence-free and has compact support. 
	By \cite[Theorem 4.7]{ABC_2014}, since $f$ does not have the weak Sard property, there exists a nontrivial
	bounded weak solution $\rho$ to \eqref{continuity-equation}--\eqref{continuity-equation-initial-condition}
	with zero initial condition. Since $\ve$ is compactly supported and $\rho|_{t=0}\equiv0$, we easily obtain that $\rho(t, \cdot)$
	is uniformly compactly supported for a.e. $t$ as well, and, consequently, $t\mapsto \|\rho(t, \cdot)\|_{L^2(\R^2)}$
	belongs to $L^\infty$. Then by \cite[Theorem 1.1]{Panov2015} the operator $\mathsf{A}(\rho) = \ve \cdot \nabla \rho$ is not essentially skew adjoint.
\end{proof}

\section{New fine properties of level sets}
\label{section-fine-properties}

We have already seen that for any Sobolev function $f\in W^{1,1}(\R^2)$
for a.e. $t\in \reg f$ the level sets $\{f=t\}$ coincide with the essential level sets $\{f=t\}^*$ up to $\H^1$-negligible subsets. 
In this section we study their relationship more carefully.

\begin{theorem}\label{levelset-subset-of-essential-levelset}
Suppose that $f\in W^{1,1}(\R^2)$ is monotone.
Then for a.e. regular value $t$ of $f$ the pointwise inclusion 
\begin{equation}\label{f-eq-t-subset-f-eq-t-star}
\{f=t\} \subset \{f=t\}^*
\end{equation}
holds.
\end{theorem}

\begin{proof}
Without loss of generality we may assume that $f$ is non-negative.
We claim that for a.e. regular value $t>0$ of $f$ the following properties hold:
\begin{enumerate}
 \item[(i)] $t$ is a Lebesgue point of the function $t\mapsto \H^1(\{f=t\})$.
 \item[(ii)] the $\L^2$-measure of $\{f=t\}$ is zero;
 \item[(iii)] the function $t\mapsto \L^2(\{f>t\})$ is continuous at $t$.
\end{enumerate}
Indeed, (i) follows immediately from the integrability of $t\mapsto \H^1(\{f=t\})$
(which we have by the Coarea formula).
Since the sets $\{f=t\}$ are disjoint for different values of $t$,
the set $\{t\in \R : \L^2(\{f=t\})>0\}$ is at most countable, so the property (ii) follows.
Ultimately, the property (iii) follows from the property (ii) by
continuity of Lebesgue measure and monotonicity of $t\mapsto \L^2(\{f>t\})$.

Take $b:= \inf\{t\le+\infty : f\le t \text{ a.e.}\}$.
By assumption that $f$ is precisely represented, $f\le b$ everywhere in $\R^2$. Hence,  it is sufficient to study the regular values $t\in (0,b)$ such that (i)--(iii) hold.
For any such value $t$ by the definition of $b$ we have $\L^2(\{f>t\})>0$ and $\L^2(\{f<t\})>0$ (since $f\in L^1(\R^2)$).

Let $\gamma_t \colon [0,1] \to \R^2$
denote the canonical parametrization of $\d^M \{f>t\}$. We can assume, without loss of generality, that $|\gamma_t'(s)| = \H^1(\{f=t\})$ for a.e. $s\in [0,1]$ (i.e., up to a scaling factor this is a natural parametrization of the curve).

By property (i) there exists a sequence $\{t_n^+\}_{n\in \N}$ of regular values of $f$ such that $t_n^+ \to t +0$
and $\H^1(\{f=t_n^+\}) \to \H^1(\{f=t\})$ as $n\to \infty$.
By Arzela--Ascoli theorem, passing to a subsequence if necessary, we may assume that $\gamma_{t_n^+}$
converges uniformly to some continuous function $\gamma\colon [0,1]\to \R^2$.
Let $\Gamma:= \gamma([0,1]).$

Consider the vector measures $\bm \mu_n := D \1_{\{f>t_n^+\}}$.
Since $|\bm \mu_n|(\R^2) = \H^1(\{f=t_n^+\})$ are uniformly bounded,
after passing to a subsequence we may assume that $|\bm \mu_n|$ converge weak* to some measure $\lam$ as $n\to \infty$.
By construction, $\lam$ is concentrated on $\Gamma$.
By lower semicontinuity of total variation
\begin{equation}
\label{tv-est}
\lam(\R^2) \le \H^1(\{f=t\}).
\end{equation}
On the other hand, by property (iii) the functions $\1_{\{f>t_n^+\}}$ converge in $L^1$ to $\1_{\{f>t\}}$ as $n\to \infty$.
Therefore, $\bm \mu_n$ converge weak* to $\bm \mu:= D \1_{\{f>t\}}$ as $n\to\infty$.
Then $|\bm \mu| \le \lam$ (see e.g. \cite[Proposition 1.62]{AFP}), hence
$0\le (\lam - |\bm \mu|)(\R^2) = \lam(\R^2) - \H^1(\{f=t\}) \le 0$ by \eqref{tv-est}.
Therefore, $\lam = |\bm \mu|=\H^1 \rest  \{f=t\}$. Since $\lam$ is concentrated on $\Gamma$, we have thus proved that
$\{f=t\} = \Gamma$ modulo $\H^1$. Then $\Gamma = \gamma_t([0,1])$.
In particular, the curves $\{f=t_n^+\}^*$
converge to the curve $\{f=t\}^*$ in the Hausdorff metric.

In the same way one can construct a sequence $\{t_n^-\}$ of regular values of $f$ such that $t_n^- \to t-0$ and
the curves $\{f=t_n^-\}^*$ converge to $\{f=t\}^*$ in the Hausdorff metric.

Suppose that $x\in \{f=t\} \setminus \gamma_t([0,1])$.
Then there exist $r>0$ and $N\in\N$ such that $B_r(x) \cap \{f=t_n^\pm\}^* = \emptyset$ for all $n>N$.
Let us fix such $n$.
Since $D\1_{\{f>t_n^\pm\}}$ is concentrated on $\{f=t_n^{\pm}\}^*$,
the function $\1_{\{f>t_n^\pm\}}$ is constant a.e. on $B_r(x)$, hence
either $B_r(x) \subset \{f>t_n^{\pm}\}$, or $B_r(x) \subset \{f\le t_n^\pm\}$ (modulo $\L^2$).
Because of equality $f(x)=t$ and by the assumption that $f$ is precisely represented, the inclusion $B_r(x) \subset \{f>t_n^+\}$ (modulo $\L^2$) is not possible.
Hence, $B_r(x) \subset \{f\le t_n^+\}$ and similarly $B_r(x) \subset \{f\ge t_n^-\}$ (modulo $\L^2$).
Therefore, for a.e. $y\in B_r(x)$ and for all $n>N$
we have $t_n^- \le  f(y) \le t_n^+$. Passing to the limit as $n\to \infty$, we get $f(y)=t$, hence $B_r(x) \subset \{f=t\}$ modulo $\L^2$.
This contradicts (ii).
\end{proof}

\begin{rem}
	The monotonicity assumption is essential for the inclusion \eqref{f-eq-t-subset-f-eq-t-star}. 
	If $f$ is not monotone, then even the weaker inclusion
	\begin{equation}
		\{f=t\}\setminus S_f  \subset \{f=t\}^* \label{essential-level-set-vs-standard-one-weak}
	\end{equation}
	may fail for some set of values of $t$ with positive measure (even if $f$ is Lipschitz), see Remark~\ref{importance-of-monotonicity} below.
	(Here $S_f$ is the approximate discontinuity set of $f$,  see the comments after~\eqref{app-lim}.)
\end{rem}

\begin{rem}[Nelson's example revisited]\label{Nelson-reprise}
The inclusion \eqref{f-eq-t-subset-f-eq-t-star} may be strict for all values of $t$ from some interval, even if the function $f$ is monotone and has $\nabla f \in L^p(\R^2)$ for all $p\in [1,2)$.
In particular, this happens for the stream function~$f$ from the Nelson's example, given by \eqref{Nelson-stream-function}:
for all $t \in (-\pi/3, \pi/3)$ the level sets $\{f=t\}$ are contained in the stripe $0<y<2$,
while the essential level sets $\{f=t\}^*$ always include the points $(0,0)$ and $(0,2)$, see Figure~\ref{stream-function2}. 
\end{rem}

As we will see, the phenomenon described in Remark~\ref{Nelson-reprise} does not occur for $\nabla f\in L^{p}_\loc$ with $p\ge 2$.
But first let us prove some auxiliary results.

\begin{lemma}\label{removable-singularity}
	Let $U\subset \R^2$ be an open set and $f\in W^{1,2}_\loc(U)$.
	Suppose that $x\in U$, $t\in \R$, and for any~$\eps > 0$ there exists $\delta > 0$ such that $B_\delta(x) \subset U$
	and for a.e. $r\in (0,\delta)$ there exists a~point $y_r\in\R^2$
	such that $|y_r - x| = r$ and $|f(y_r) - t| < \eps$.
	Then
	\begin{equation*}
		\lim_{r\to 0} \dashint_{B(x,r)} |f(y) - t| \, dy = 0,
	\end{equation*}
	i.e., $x$ is a Lebesgue point of $f$ and $f(x) = t$.
\end{lemma}

\begin{proof}
	Let
	\begin{equation*}
		S_r := \{y \in \R^2 \;|\; |x-y| = r\}
	\end{equation*}
	denote the circle with radius $r$ and center $x$.
	Let $\eps > 0$. Let $\delta$ be given by the assumptions of the lemma.
	Then by standard properties of Sobolev functions, for a.e. $r\in(0,\delta)$  the following assertions are fulfilled:
	\emph{all points $y\in S_r$ are the Lebesgue points of~$f$ and $\H^1$-almost all points $y\in S_r$ are the Lebesgue points of~$\nabla f$, moreover, $f$ is absolutely continuous on~$S_r$, 
	and its tangential derivative with respect to~$S_r$ coincides with $\nabla f(y)\cdot\bm\tau$ for $\H^1$-almost all $y\in S_r$, where $\bm\tau$ is the~corresponding tangential vector to~$S_r$}.

	Thus, for a.e. $r\in(0,\delta)$ and for all points $y \in S_r$ we have
	\begin{equation*}
		|f(y) - t| < |f(y) - f(y_r)| + \eps \le \int_{S_r} |\nabla f| \, d\H^1 + \eps.
	\end{equation*}
	Let $\tilde B_r(x) := B_r(x) \setminus B_{r/2}(x)$. Then for any $r\in (0,\delta)$
	\begin{equation}
		\begin{aligned}\label{wrk3}
			\int_{\tilde B_r(x)} |f(y) - t| \, dy
			&= \int_{r/2}^{r} \left( \int_{S_\rho} |f(y) - t| \, d\H^1 \right) \, d\rho \\
			&\le \int_{r/2}^{r}\left(\int_{S_\rho} |\nabla f| \, d\H^1 + \eps\right)   2\pi \rho\, d\rho
			\le 2\pi r \int_{\tilde B_r(x)} |\nabla f| \, dy + \eps \L^2(\tilde B_r(x)).
		\end{aligned}
	\end{equation}
	By Cauchy--Bunyakovsky inequality we have
	\begin{equation*}
		\int_{\tilde B_r(x)} |\nabla f| \, dy \le \left(\int_{\tilde B_r(x)} |\nabla f|^2 \, dy\right)^{1/2} \left(\int_{\tilde B_r(x)} 1 \, dy\right)^{1/2}
	\end{equation*}
	By the absolute continuity of the Lebesgue integral, for any $\eps > 0$ there exists $\delta_1 \in (0,\delta)$
	so that for all $r\in (0,\delta_1)$ we have $\left(\int_{B_r(x)} |\nabla f|^2 \, dy\right)^{1/2} < \eps$.
	Since $ r = \frac{2}{\sqrt{3\pi}} \sqrt{\L^2(\tilde B_r(x))}$, by \eqref{wrk3} we have
	\begin{equation}
		\int_{\tilde B_r(x)} |f(y) - t| \, dy \le \left(1 + 4\sqrt{\frac{\pi}{3}}\right) \eps \L^2(\tilde B_r(x)).
		\label{telescope}
	\end{equation}
	Replacing in \eqref{telescope} $r$ with $r_k:=2^{1-k}r$ and summing for all $k\in \N$ we obtain
	\begin{equation*}
		\int_{B_r(x)} |f(y) - t| \, dy \le \left(1 + 4\sqrt{\frac{\pi}{3}}\right) \eps \L^2(B_r(x)).
		\qedhere
	\end{equation*}
\end{proof}

Lemma~\ref{removable-singularity} implies that if $f\in W^{1,2}(\R^2)$ is constant on some connected set $C\subset \R^2$, then $f$ takes the same value on the closure of $C$:

\begin{theorem}\label{constancy-on-connected-sets}
	Let $U \subset \R^2$ be an open set and suppose
	that $f\in W^{1,2}_\loc(U)$. Then the following properties hold:
	
	\begin{enumerate}
		\item[(i)] Let $t\in \R$ and let $C \subset U$ be a connected set with $\diam C > 0$.
		If $f(x)=t$ for $\H^1$-a.e. $x\in C$, then any $x\in U\cap \overline{C}$
		is a Lebesgue point of $f$ and we have $f(x) = t$.
		
		\item[(ii)] Let $\gamma \colon [0,1] \to U$ be a closed simple Lipschitz curve, $s\in \R$, 
		and $f(x)=t$ for $\H^1$-a.e. $x\in \gamma([0,1])$. Then for all $s\in [0,1]$ the point $\gamma(s)$ is a Lebesgue point of $f$
		and $f(\gamma(s)) = t$.
		
		\item[(iii)] If $U=\R^2$ and, in addition, $f\in W^{1,1}(\R^2)$, then
		for all $t\in \reg f$ all points of the set $\{f=t\}^*$
		are Lebesgue points of $f$ and 
		\begin{equation}
			\{f=t\}^*\subset \{f=t\}.
		\end{equation}
	\end{enumerate}
\end{theorem}

\begin{proof}
	(i). Let $x\in U\cap \overline{C}$.
	Since $C$ is connected and $\diam C >0$, there exists $\delta>0$
	such that for any $r\in (0, \delta)$ the circle
	$S_r(x)= \{y \in \R^2 \;|\; |x-y| = r\}$
	intersects $C$ at some point $y_r$.  Let $\widetilde S_f$ denote the set of all such $y_r$ where $f(y_r)\ne t$.  By the theorem assumptions, $\H^1(\widetilde S_f)=0$. Since the~$\H^1$-measure of the orthogonal projection of a set does not exceed the $\H^1$-measure of the original set (see e.g. \cite[Proposition 2.49]{AFP}), we have $\L^1(N)=0$, where $N=\{r\in (0,\delta) : y_r\in \widetilde S_f\}$. In other words, for almost all $r\in (0, \delta)$ the circle
	$S_r (x)$
	intersects $C$ at some point $y_r$ with $f(y_r)=t$. Then 
	by Lemma~\ref{removable-singularity}
	we have $f(x) = t$. Clearly,~(i) implies (ii),  and (ii) implies (iii).
\end{proof}

Nelson's example \eqref{Nelson-stream-function} shows that the assumption $\nabla f \in L^2_\loc(U)$ cannot be replaced with a weaker assumption $\nabla f \in L^p_\loc(U)$ with $p<2$.

We have already mentioned in the Introduction, that the level sets of functions from $W^{1,2}(\R^2)$ are not necessarily closed.
However, their nontrivial connected components are:

\begin{cor}\label{cor-connected-components-are-closed}
	Let $f\in W^{1,2}_\loc(\R^2)$, then all  the nontrivial connected components of the level sets of $f$ are closed. 
\end{cor}

\begin{proof}
	Let $t\in \R$ and let $C$ be a connected component of $\{f=t\}$ with $\diam C>0$.
	By Theorem~\ref{constancy-on-connected-sets} we have $\overline{C} \subset \{f=t\}$.
	Since closure of a connected set is connected, by maximality of $C$ among the connected subsets of $\{f=t\}$ we have $\overline{C} \subset C$. Hence $C = \overline{C}$.
\end{proof}

Now we can easily improve Lemma~\ref{lemma-constancy-along-connected-components}:

\begin{theorem}\label{constancy-along-connected-components}
	Suppose that $f, g \in W^{1,1}(\R^2) \cap W^{1,2}(\R^2)$ and $\nabla f \parallel \nabla g$ a.e.
	Then for a.e. regular value $t$ of $g$ and for every $\gamma\in \cd^M\{g>t\}$ there exists a constant $c=c(t,\gamma)\in \R$
	such that for all $x\in \gamma([0,1])$ we have $f(x) = c$
	and $x$ is a Lebesgue point of $f$.
\end{theorem}

\begin{proof}
	By Lemma~\ref{lemma-constancy-along-connected-components} there exists a negligible set $N \subset \R$ such that $N^c \subset \reg g$ and for all $t\in N$
	for any $\gamma \in \cd^M \{g>t\}$ there exists $c \in \R$ such that
	$\H^1$-a.e. $x\in \gamma([0,1])$ is a Lebesgue point of $f$
	and the equality $f(x)=c$ holds.
	Then by Theorem~\ref{constancy-on-connected-sets}
	any $x\in \gamma([0,1])$ is a Lebesgue point of $f$ and $f(x)=c$.
\end{proof}

Furthermore, for $W^{1,2}$-functions we can refine Theorem~\ref{levelset-subset-of-essential-levelset} (a further refinement will given in Theorem~\ref{monotonicity-implies-continuity}):

\begin{prop}
\label{almost-all-level-sets-are-essential}
Let $f\in W^{1,1}(\R^2) \cap W^{1,2}(\R^2)$ be a monotone function.
Then for a.e. $t\in \reg f$ we have
\begin{equation}
\{f=t\}^* = \{f=t\}
\quad\text{and}\quad
\{f=t\} \cap S_f = \emptyset,
\label{essential-level-set-vs-standard-one}
\end{equation}
where $S_f$ is the approximate discontinuity set of $f$.
\end{prop}

\begin{proof}
By Theorem~\ref{levelset-subset-of-essential-levelset} for a.e. $t\in \reg f$ we have the inclusion
\begin{equation}\label{reggv3}
	\{f=t\} \subset \{f=t\}^*.
\end{equation}
By Theorem~\ref{constancy-on-connected-sets}(iii) for any such $t$
we have the opposite inclusion
\begin{equation}
	\{f=t\}^* \subset \{f=t\}.
\end{equation}
and, furthermore, $\{f=t\}^* \cap S_f = \emptyset$. 
\end{proof}

\begin{cor}\label{E-T-is-measurable}
Let $f\in W^{1,1}(\R^2) \cap W^{1,2}(\R^2)$ be a monotone  function.
Then there exists a negligible set $N\subset \R$ such that
for any Borel set $T \subset \reg f \setminus N$
the set $E_T := \bigcup_{t\in T} \{f=t\}^*$ is measurable.
\end{cor}

\begin{proof}
By Proposition~\ref{almost-all-level-sets-are-essential} there exists a negligible set $N\subset \R$
such that \eqref{essential-level-set-vs-standard-one}  holds for all $t\in \reg f \setminus N$.
Let $T \subset \reg f \setminus N$ be a Borel set. Then by \eqref{essential-level-set-vs-standard-one}
\begin{equation*}
E_T = f^{-1}(T).\end{equation*}
The set $f^{-1}(T)$ is measurable, since $T$ is Borel. 
\end{proof}

The Sobolev regularity exponent $p=2$ in Theorem~\ref{constancy-along-connected-components} and Proposition~\ref{almost-all-level-sets-are-essential} is sharp:

\begin{rem}[Nelson's example revisited once again]
	If $f\in W^{1,1}(\R^2)\cap W^{1,p}(\R^2)$ with $p<2$, then the essential level sets $\{f=t\}^*$ may intersect the approximate discontinuity set $S_f$ for some \emph{non-negligible} set of values $t$, in contrast to \eqref{essential-level-set-vs-standard-one}.
	This is demonstrated by the stream function $f$ from the Nelson's example \eqref{Nelson-stream-function}: the origin~$O$ belongs to $\{f=t\}^*$ for all $t\in (0,\frac{\pi}{3})$, and at the same time $O$ belongs to $S_f$. Indeed, for any $h\in (0,\frac{\pi}{3})$ the density of $\{f>h\}$ at $O$ belongs to the interval~$(0,1)$, and this implies that the identity~\eqref{app-lim} at $O$ does not hold, as one can show using, e.g., \cite[Proposition 3.65]{AFP}.
\end{rem}

\subsection{Connected components of the level sets}\label{sec-connected-components}

\begin{prop}\label{essential-level-sets-and-connected-components}
	Let $f\in W^{1,1}(\R^2) \cap W^{1,2}(\R^2)$.
	Then for a.e. $t\in \reg f$ we have $\{f=t\}^* = E_t^*$,
	where $E_t^*$ denotes the union of all connected components $C$ of the set $\{f=t\}$ such that $\H^1(C) >0$.
	
	Furthermore, for a.e. $t\in \reg f$ the corresponding curves $\gamma([0,1])$ {\rm(}with $\gamma \in \cd^M \{f>t\}${\rm)} are precisely the connected components of $E_t^*$; in particular, they are pairwise disjoint.
\end{prop}

In order to prove Proposition~\ref{essential-level-sets-and-connected-components}, let us recall that a set $Y\subset \R^d$ is called a \emph{triod} if there exist continuous curves $\gamma_{i} \colon [0,1] \to \R^d$, $i\in\{1,2,3\}$, such that $\gamma_1(0) = \gamma_2(0) = \gamma_3(0)$, the sets $\gamma_i((0,1])$ are pairwise disjoint and $Y = \bigcup_{i=1}^3 \gamma_i([0,1])$.
The following result is a corollary of \cite[Theorem 1]{Moore1928}, with an independent proof given in \cite[Lemma 2.15]{ABC_2013_LipSard}; see also \cite[Lemma 1]{Kronrod1950} for a related result:
\begin{lemma}\label{triod-lemma}
	Let $\mathscr{F}$ ba a family of pairwise disjoint triods in $\R^2$.
	Then $\mathscr{F}$ is at most countable.
\end{lemma}

\begin{proof}[Proof of Proposition~\ref{essential-level-sets-and-connected-components}]
	By Theorem~\ref{constancy-on-connected-sets}~(iii) there exists a negligible set $N_1 \subset \R$ such that $N_1^c \subset \reg f$ and for all $t\in N_1^c$
	the inclusion~$\{f=t\}^* \subset \{f=t\}$ holds. Since for any $\gamma \in \cd^M \{f>t\}$ we have $\H^1(\gamma([0,1]))>0$, we also have $\{f=t\}^* \subset E_t^*$.
	Now let us prove that for a.e. $t\in N_1^c$ the opposite inclusion holds.
	
	Let $N_2 \subset \R$ denote the family of all $t\in \R$ such that $\{f=t\}$ contains a triod $Y_t$. 
	By the definition of level set, the family $\{Y_t\}_{t\in N_2}$ is disjoint.
	Then by Lemma~\ref{triod-lemma} the set $N_1$ is at most countable.
	Let $N:=N_1\cup N_2$ and suppose that $t\in N^c$.
	
	Let $C$ be a connected component of $\{f=t\}$ with $0<\H^1(C)<\infty$.
	Let $x\in C$ and suppose that $x$ belongs to $\Gamma := \gamma([0,1])$
	for some $\gamma \in \cd^M \{f>t\}$.
	Since $C \subset \{f=t\}^*$ modulo $\H^1$ (see Definition~\ref{def-reg}), 
	such $\gamma$ exists for $\H^1$-a.e. $x\in C$.
	Since the connected sets $C$ and $\Gamma$ intersect and both of them are subsets of $\{f=t\}$, we have $\Gamma \subset C$.
	Let us prove that $C \subset \Gamma$.
	Suppose that, on the contrary, $y\notin\Gamma$ for some~$y\in C$.  
	Since $C$ is arcwise connected (see e.g. \cite[Lemma 3.12]{Falconer1985}),
	there exists an injective continuous function $\theta\colon[0,1]\to \R^2$
	such that $\theta(0)=x,$ $\theta(1)=y$ and $\theta([0,1])\subset C$. Let $m:=\sup\{s\in[0,1] : \theta(s) \in \Gamma\}$.
	By~continuity of $\theta$ we have $\theta(m)\in \Gamma$ (since $\Gamma$ is compact).
	By construction, $0<m<1$.
	Then, by definition of $m$, the sets $\Gamma$ and $\theta((m,1])$ are disjoint.
	Hence the set $\{f=t\}$ contains a triod,
	which is impossible since~$t\notin N_2$.
	Thus $C \subset \Gamma$, therefore, by the previous inclusion, $C=\Gamma$.
	Hence $C \subset \{f=t\}^*$ and, consequently, $E^*_t \subset \{f=t\}^*$.
	
	It remains to note that for any connected component $C$ of $E_t^*$
	for any $x\in C$ there exists at most one $\gamma \in \cd^M \{f>t\}$ such that $x\in \gamma([0,1])$. Suppose that $\theta \in \cd^M \{f>t\}$ is another such curve.
	Then by the above argument $\theta([0,1]) = C = \gamma([0,1])$.
	This clearly contradicts Corollary~\ref{structure-of-essential-boundary-2d}, according to which any two distinct curves $\gamma, \theta \in \cd^M\{f>t\}$ are disjoint up to $\H^1$-negligible subsets.
\end{proof}

In order to discuss the weak Sard property in the Sobolev setting, we need to generalize the Corollary~\ref{E-T-is-measurable}.

\begin{prop}\label{Borel-prop}
	If $f\in W^{1,2}(\R^2)$, then the union $E^*$ of all connected components $C$ of the $f$-level sets with $\H^1(C)>0$
	is Borel.
\end{prop}

\begin{proof}
Recall, that by our agreement, $f$ is a~precisely represented function (see Section~\ref{sec:np}\,).
	For any $\ell > 0$ let $M_\ell$ denote the union of all connected components $C$
	of the level sets of $f$ such that $\diam(C) \ge \ell$.
	Let us prove that $M_\ell$ is closed.
	
	Let $x_k \in M_\ell$ and $x_k\to x$ as $k\to \infty$, $k\in\N$.
	Put $t_k:= f(x_k)$ and let $\tilde C_k$ denote the connected component of the level set $\{f = t_k\}$
	such that $x_k\in \tilde C_k$ and $\diam \tilde C_k\ge \ell$. By Theorem~\ref{constancy-on-connected-sets}, every $\tilde C_k$ is a~closed set consisting of Lebesgue points of~$f$. 
	
	Put $R=2\ell$ and $B=\overline{B_R(x)}$. Denote by $C_k$ the connected component of the set $\tilde C_k\cap B$ containing $x_k$.
	Then by construction $C_k$ is a compact connected set in $B$ and $\diam C_k\ge \ell$ as well.
	
	By elementary arguments (related to the trace theorems for Sobolev mappings), the inequality 
	$$\int\limits_B\bigl(f^2+|\nabla f|^2\bigr) \, dx<\infty$$
	implies the uniform boundedness of~$t_k$. 
	Indeed, since there exists two points $x,y\in C_k$ with $|x-y|\ge \ell$, the length of the orthogonal projection of $C_k$ on some line is at least $\ell$. 
	Without loss of generality suppose that this line is parallel to $Ox_1$ and the projection of $C_k$ on $Ox_1$ contains $[0,\ell]$. 
	Let $Q = (0,\ell) \times (a,b)$, where $a$ and $b$ are such that $[0,\ell]\times \{a,b\} \cap B = \emptyset$. 
	Then for any $x_1\in [0,\ell]$ the segment $\{x_1\}\times [a,b]$ intersects $C_k$ at some point $(x_1,\tilde x_2)$.
	Let $g$ denote the trace of $f$ on the upper side of $Q$.
	Then
	\begin{align}
		|\ell t_k| &\le \int_0^\ell |t_k - g(x_1)| \, dx_1 + \int_0^\ell |g(x_1)| \, dx_1
		= \int_0^\ell |f(x_1, \tilde x_2) - g(x_1)| \, dx_1 + \int_0^\ell |g(x_1)| \, dx_1\\
		&\le \int_0^\ell \int_a^b |\d_{x_2} f| \, dx_2 \, dx_1 + \int_0^\ell |g(x_1)| \, dx_1 \le c\|f\|_{W^{1,1}(Q)},
	\end{align}
	where $c$ is some constant related to the standard estimate of the trace.
	Hence the sequence $\{t_k\}$ is bounded.
	Passing to a~subsequence, if necessary, we may assume, that the sequence $t_k$
	converges to some $t\in \R$ as $k\to \infty$.
	
	Let $\mathscr F$ denote the family of all nonempty compact sets $C \subset B$.
	We endow $\mathscr F$ with the Hausdorff metric.
	By the well-known Blaschke selection theorem
	(see, e.g., \cite[Theorems 3.16]{Falconer1985}), $\mathscr F$ is a compact metric space.
	Let $\mathscr F_c$ denote the family of all $C \in \mathscr F$ that are connected.
	Then $\mathscr F_c$ is closed with respect to the~convergence in the Hausdorff metric
	(see e.g. \cite[Theorems 3.18]{Falconer1985}).
	Hence, passing to a subsequence, we may assume that
	the sequence $C_k$ converges in the Hausdorff metric
	to some connected compact set $C_0$ with $\diam(C_0) \ge \ell$.
	Since $x_k \in C_k$ for all $k\in \N$ and $x_k \to x$ as $k\to \infty$, we have $x\in C_0$.
	It remains to prove that $t = f(z)$ for any $z\in C_0$. 
	
	Indeed, since $z$ is a limiting point for some $C_k\ni z_k\to z$ and $\diam(C_k) \ge \ell$,  for any $r\in (0, \ell/2)$
	there exists $N\in \N$ such that for all $k > N$
	the circle $S_r := \{y\in \R^2 : |y-z| = r\}$ intersects $C_k$.
	Hence, for any $\eps > 0$ on any such circle we may find a~point~$\xi\in C_k$ 
	such that $|f(\xi) - t| = |t_k - t| < \eps$.
	Then by Lemma~\ref{removable-singularity} we have $f(z) = t$.
\end{proof}

\begin{rem}
	For a generic function from $W^{1,1}(\R^2) \cap W^{1,2}(\R^2)$, the <<gap>> between the union of the level sets and the union of the essential level sets~$E^*$ is a subset of the critical set $\{\nabla f=0\}$ modulo $\L^2$. This follows directly from the Coarea formula applied to the complement $\R^2\setminus E^*$. \end{rem}

\subsection{Monotonicity and continuity}\label{sec-monotonicity-and-continuity}
In fact, Proposition~\ref{almost-all-level-sets-are-essential} can be strengthened even further:

\begin{theorem}\label{monotonicity-implies-continuity}
	Let $f\in W^{1,1}(\R^2) \cap W^{1,2}(\R^2)$ be a monotone function.
	Then there exist at most one point $x_{+} \in \R^2$
	such that $f(x_{+}) = +\infty$
	and at most one point $x_{-} \in \R^2$
	such that $f(x_{-}) = -\infty$.
	Furthermore, for any $x\in \R^2 \setminus \{x_{+}, x_{-}\}$
	the function $f$ is continuous at $x$, i.e., $f\in C(\R^2 \setminus \{x_{+}, x_{-}\})$. In particular, $S_f \subset \{x_{+}, x_{-}\}$. If, in addition, $f$ is bounded, then we have $f\in C(\R^2)$.
\end{theorem}

\begin{proof}
	Without loss of generality assume that $f \ge 0$. Denote $M=\sup f$.  
	By Proposition~\ref{almost-all-level-sets-are-essential},
	there exists a measurable set $T \subset (0,M)$ such that $T \subset \reg f$,
	$(0,M) \setminus T$ is negligible, and for all $t\in T$ we have $\{f=t\} = \{f=t\}^*\ne\emptyset$.
	For any $t\in T$ let $I_t$ is the bounded connected component of $\R^2 \setminus \{f=t\}^*$, given by Jordan curve theorem, and let $E_t$ denote the corresponding unbounded connected component. By Lemma~\ref{Jordan-interior} we have 
	\begin{equation}
		\label{I-and-E}
		I_t = \{f>t\} \mod \L^2
		\quad\text{and}\quad
		E_t = \{f\le t\} \mod \L^2.
	\end{equation}
	
	For any $x\in \R^2$  let us define
	\begin{equation}
		\overline{f}(x) := \begin{cases}\sup\,\{t \in T : x \in I_t \}, \ \ & \mbox{ if }f(x)>0;\\
		                 0,\ \ & \mbox{ if }f(x)=0.
		                 \end{cases}
	\end{equation}
		Now let $t\in T$ be such that $t<\overline{f}(x)$.
	Since $I_t$ is open, there exists $r>0$ such that $B_r(x) \subset I_t$.
	Hence, by \eqref{I-and-E} we have $f>t$ a.e. on $B_r(x)$.
	Then by the definition of precise representative $f\ge t$ \emph{everywhere} on $B_r(x)$.
	Consequently, $f(x) \ge \overline{f}(x)$ everywhere. 
	
	Suppose now that $s\in T$ is such that $\overline{f}(x) < s$.
	Then either $x\in \{f=s\}^*=\{f=s\}$ or $x \in E_s$.
	Slightly decreasing, if necessary, the value $s$, we can exclude the first option (but keep the inequality $\overline{f}(x) < s$).
	So without loss of generality $x\in E_s$.
	Since $E_s$ is open, there exists $\rho>0$ such that $B_\rho(x) \subset E_s$.
	Hence by \eqref{I-and-E} we have $f\le s$ a.e. on $B_\rho(x)$.
	Then, by the definition of precise representative, $f\le s$ \emph{everywhere} on $B_\rho(x)$.
	Consequently $f(x) \le \overline{f}(x)$, and, therefore, $f(x) = \overline{f}(x)$ everywhere on~$\R^2$.
	
	In the case $0<\overline{f}(x) < +\infty$ for any $y \in B_r(x) \cap B_\rho(x)$ we have
	\begin{equation}
		t \le f(y) \le s,
	\end{equation}
	where $t,s,r, \rho$ are from the above procedure.
	Since $t$ and $s$ may be chosen arbitrarily close to $\overline{f}(x)$,
	this implies that $f$ is continuous at $x$. The same argument works for the case $f(x)=0$ with an~evident simplification (i.e., we need only the~one-sided estimate from above for this case). 
	
	Suppose now that $\overline{f}(x) = \overline{f}(y) = +\infty$ for two distinct points $x$ and $y$. Then for all $t \in T$ we have $x,y \in I_t$, i.e., $x$ and $y$ are inside the bounded domain surrounded by the simple Jordan curve~$\{f=t\}^*$. 
	Then $\H^1(\{f=t\}^*) \ge  |x-y|$ and hence $\int_T \H^1(\{f=t\}) \, dt = +\infty$, which is impossible by Coarea formula.
\end{proof}

As we already mentioned, there exist some other definitions of monotonicity for functions from $\R^d$ to~$\R$. In one of them, which is common for continuous functions, the level sets $\{f=t\}$ are required to be connected for all $t\in \R$ (see e.g. \cite[p.~358]{Engelking1989}).
In \cite[Proposition 1]{BT} it was proved that for a compactly supported Lipschitz function $f$ this definition of monotonicity is equivalent to Definition~\ref{def-monotone-BT}.
Using Theorem~\ref{monotonicity-implies-continuity}, it is easy to generalize this result.
Let us state some auxiliary results.

\begin{lemma}[{\cite[Lemma 2.17]{ABC_2013_LipSard}}]
	\label{triod-free-closed-connected-set-with-finite-length}
	Let $E\subset \R^d$ be a closed, connected set with $\H^1(E)<\infty$.
	If~$E$ contains no triods, then it is a simple curve, possibly closed.
	More precisely, there exists a Lipschitz function $\gamma\colon[0,1]\to\R^d$, which is injective on $[0,1)$ and satisfying $E = \gamma([0,1])$.
\end{lemma}

\begin{lemma}\label{connected-open-set-with-connected-exterior}
	Suppose that the space~$\R^d$ is represented as a disjoint union $\R^d = A \sqcup B \sqcup C$, where $A$ and $B$ are connected open sets with common boundary $C$, and $d\ge 2$. Then $C$ is connected as well.
\end{lemma}

In \cite{CKL2011}\footnote{We would like to thank Roman Dribas for suggesting this reference.} it was proved that for any open set $U \subset \R^d$ ( $d\ge 2$) the boundary $\d U$ is connected if and only if~$U^c$ is connected. This result immediately implies Lemma~\ref{connected-open-set-with-connected-exterior}, since the closure of any connected open set in $\R^d$ is connected. We also give a self-contained proof of Lemma~\ref{connected-open-set-with-connected-exterior} in the Appendix~\ref{appendix-connectedness-lemma}.

Now we are in a position to prove the equivalence of the two definitions of monotonicity:

\begin{theorem}\label{alt-mono}
	Let $f$ be a~bounded function from $W^{1,1}(\R^2) \cap W^{1,2}(\R^2)$. Then $f$ is monotone if and only if for all nonzero $t\in \R$ the level set $\{f=t\}$ is connected.
\end{theorem}

\begin{proof}
	\emph{Necessity.} Without loss of generality assume that $f \ge 0$.
	Denote $M=\sup f$. If $M=0$, then $f\equiv0$ and nothing to prove. Now suppose $M>0$. Then by the theorem assumptions, $0< M<+\infty$. By Proposition~\ref{almost-all-level-sets-are-essential} for a.e. $t\in (0,M)$ the level set $\{f=t\}$ coincides with the curve $\{f=t\}^*$, hence it is connected.
	It remains to prove that $\{f=t\}$ is connected for \emph{all} $t\in (0,M]$, Denote $I:=(0,M)$.
	
	By Theorem~\ref{monotonicity-implies-continuity} we have $f\in C(\R^2)$, hence for all $t\in I$ the sets $\{f<t\}$ and $\{f>t\}$ are open. By the definition of monotonicity, for a.e. $t\in I$ these sets are indecomposable, hence connected (being open) by Theorem~\ref{indecomposablity-criterion-for-open-sets}.
	We claim that they are connected for \emph{all} $t\in I$.
	Suppose that, for instance, $\{f>t\}$ is disconnected for some $t\in I$.
	Then there exist disjoint open sets $U$ and $V$ such that $\{f>t\} \subset U \sqcup V$
	and the sets $\{f>t\} \cap U$ and $\{f>t\} \cap V$ are nonempty.
	By continuity of $f$, there exists $\delta>0$ such that for all $r\in (0,\delta)$ the sets $\{f>t+r\}\cap U$ and $\{f>t+r\}\cap V$ are nonempty. 
	Since $\{f>t+r\} \subset \{f>t\}\subset U \sqcup V$, if follows that the sets $\{f>t+r\}$ are disconnected.
	This is a contradiction, since for a.e. $r>0$ they are connected.
	
	Since for all $t\in I$ the sets $\{f>t\}$ and $\{f<t\}$ are connected and open, by Lemma~\ref{connected-open-set-with-connected-exterior} $\{f=t\}$ is connected as well. It remains to check the connectedness of the limiting level set $\{f=M\}$. Take an~increasing sequence or regular values $t_k<M$ such that $t_k\to M$ and the level set $\{f=t_k\}$ coincides with the simple Jordan curve $\{f=t_k\}^*$. Then by construction $E_k:=\{f\ge t_k\}$
	coincides with the compact connected set surrounded by this simple Jordan curve. Again by construction
	$$E_{k+1}\subset E_k, \qquad \{f=M\}=\bigcap\limits_{k\in\N}E_k.$$
Then the connectedness of the~set $\{f=M\}$ follows from the well-known fact that intersection of a~countable decreasing family of compact connected sets is compact and connected as well. 
	 
	 	\emph{Sufficiency.}
	Let $0\ne t\in \R$ and suppose that $\diam \{f=t\} > 0$.
	Since $\{f=t\}$ is connected, it is also closed by Corollary~\ref{cor-connected-components-are-closed}.
	(If $\diam\{f=t\}=0$, then $\{f=t\}$ is a single point or empty, thus also closed.)
	Hence the level sets of $f$ are closed.
	
	By the Coarea formula,  for a.e. $t\in \R$ we have $\H^1(\{f=t\}) < \infty$.
	Furthermore, by Lemma~\ref{triod-lemma} for a.e. $t\in \R$ the level set $\{f=t\}$ has no triods. Therefore, for a.e. $t\in \R$ the level set $\{f=t\}$ is a closed connected set with finite length and without triods. Then by Lemma~\ref{triod-free-closed-connected-set-with-finite-length} for all such $t$
	the level set $\{f=t\}$ is a closed simple Lischitz curve.
	But for a.e. $t$ we also have $\d^M \{f>t\} = \{f=t\} \mod \H^1$,
	so the essential boundaries of $\{f>t\}$ are also simple Lipschitz curves up to $\H^1$-negligible subsets. Hence by Theorem~\ref{boundary-of-simple-plane-sets} for a.e. $t$ the set $\{f>t\}$ is simple if $t>0$ and the set $\{f<t\}$ is simple if $t<0$. Thus, $f$ is monotone.
\end{proof}

\begin{rem}
The condition~$t\ne 0$ in assumptions of Theorem~\ref{alt-mono} is essential, for the corresponding counterexample one can take a~monotone function $f\in C^\infty(\R^2)\cap W^{1,1}(\R^2) \cap W^{1,2}(\R^2)$ of type 
$f(x,y)=\omega(x)e^{-y^2}$, where $0\ne\omega\in C^\infty_0(\R)$ is a nonnegative even function
nonincreasing on $[0,+\infty)$. By construction, $f>0$ on the axis $O_y=\{(0,y)\in\R^2: y\in\R\,\}$. 
 \end{rem}

\section{New features of the weak Sard property}\label{sec:wsp}

If $f\in C^2(\R^2)$, then by the classical Sard theorem $f(Z)$ is negligible,
where $Z=\{x\in \R^2 : \nabla f(x) = 0\}$ is the critical set of $f$.
Consequently, we have
\begin{equation}
	f_\# (\1_{Z} \cdot \L^2) \perp \L^1. \label{RSP}
\end{equation}
This property is well-defined also for Sobolev functions (for which the critical set $Z$ is understood as the set where the distributional derivative of $f$ vanishes), see also \cite{DribasGolovnevGusev}:
\begin{defn}\label{RSP-def}
	Let $Z$ denote the critical set of a function $f \in W^{1,1}(\R^2)$.
	If \eqref{RSP} holds, then we say that $f$ has the relaxed Sard property.
\end{defn}
A weaker version of this property was introduced in \cite{ABC_2014}:

\begin{defn}\label{WSP-old}
Let $f\in W^{1,1}(\R^2) \cap W^{1,2}(\R^2)$ and let~$E^*$ denote the union of all connected components $C$ of the level sets of $f$ such that $\H^1(C)>0$.
Let $Z$ be the critical set of $f$, i.e., the set of all points $x\in \R^2$ where either $f$ is not differentiable or $\nabla f(x)=0$. The~function~$f$ is said to have the \emph{weak Sard property}, if 
\begin{equation}
f_\# (\1_{Z\cap E^*} \cdot \L^2) \perp \L^1. \label{WSP}
\end{equation}
\end{defn}

For Lipschitz functions the left-hand side of \eqref{WSP} is well-defined, since the set $E^*$ is Borel, see \cite[Proposition 6.1]{ABC_2013_LipSard}.
By Proposition~\ref{Borel-prop} the set $E^*$ is measurable even if $f\in W^{1,2}(\R^2)$.
Examples of Lipschitz function which do not have weak Sard property were constructed in \cite[Section 5]{ABC_2013_LipSard}.

\begin{rem}\label{on-WSP-def}
The property \eqref{WSP}
is strictly weaker than \eqref{RSP}:
An example of Lipschitz function for which \eqref{WSP} holds but \eqref{RSP} does not hold
was constructed in \cite[Proposition 5.1]{Bon17a}
(a similar example in the class $C^{1+\alpha}$ is constructed in \cite{DribasGolovnevGusev}). There is a certain similarity with the famous counterexample by S.V.~Konyagin~\cite{Konyagin93}, who constructed a~$C^1$-smooth function $f:\R^2\to\R$ such that 
for any $t\in[-1,1]$ the preimage $f^{-1}(t)$ can {\it not} be covered by at most countable union of rectifiable curves, i.e., the~exceptional sets of $1$-measure zero are not negligible there as well. 
However, we will see in Corollary~\ref{pushforward-monotone-WSP} that
the properties \eqref{WSP} and \eqref{RSP} are equivalent if $f$ is monotone.
\end{rem}

\begin{rem}\label{importance-of-monotonicity}
In general \eqref{essential-level-set-vs-standard-one-weak} does not hold without the monotonicity assumption,
even if $f$ is Lipschitz. Indeed, consider the function $f$ mentioned in Remark~\ref{on-WSP-def} satisfying~\eqref{WSP}, but not~\eqref{RSP}.
By Proposition~\ref{essential-level-sets-and-connected-components}
for a.e. $t\in \R$ we have $\{f=t\}^* = E^*_t$.
Let $N$ denote the set of all such $t$ for which
the intersection $Z \cap \{f=t\} \setminus E^*_t$ contains a Lebesgue point of $f$.
If $\L^1(N) = 0$,  then $f_\# (\1_{Z\setminus E^*}\L^2) \perp \L^1$.
Then by \eqref{WSP} we have $f_\# (\1_Z \L^2) \perp \L^1$, which is a contradiction.
\end{rem}

In order to simplify the work with the weak Sard property within the framework of Sobolev spaces, we introduce the following definition (cf. \cite[Remark 4.3]{BBG2016}):

\begin{defn}
Let $f\in W^{1,1}(\R^2)$ and suppose that $M\subset \R^2$.
We say that \emph{the level sets of $f$ almost avoid the set $M$},
if there exist an $\L^2$-negligible set $X\subset \R^2$ and an $\L^1$-negligible set $N \subset \R$
such that for all regular values $t$ of $f$ satisfying $t\notin N$ we have
\begin{equation*}
\{f=t\}^* \cap (M\setminus X) = \emptyset.
\end{equation*}
\end{defn}

The definition above allows one to give an equivalent reformulation of the weak Sard property, which does not require explicitly the measurability of the set~$E^*$:

\begin{prop}\label{oldWSP=newWSP}
Let $f\in W^{1,1}(\R^2) \cap W^{1,2}(\R^2)$.
Then the level sets of $f$ almost avoid a measurable set $M\subset \R^2$ if and only if $f_\# (\1_{M\cap E^*} \L^2) \perp \L^1$.
Hence the level sets of $f$ almost avoid the critical set of $f$
if and only if $f$ has the weak Sard property.
\end{prop}

\begin{proof}
\emph{Necessity.} Let $N_1 \subset \R$ and $X\subset \R^2$ be negligible sets such that
for all $t\in N_1^c$ we have
\begin{equation*}
\{f=t\}^* \cap M \setminus X = \emptyset.
\end{equation*}
By Proposition~\ref{essential-level-sets-and-connected-components}, there exists a negligible set $N_2\subset \R$ such that $\{f=t\}^* = E_t^*$ for all $t\in N_2^c$.
Let $N:= N_1 \cup N_2$. Then for all $t\in N^c$ we have
\begin{equation*}
E_t^* \cap M  = \{f=t\}^* \cap M \subset X
\end{equation*}
hence $(E^* \cap M)\setminus f^{-1}(N) \subset X$, and, consequently, $f_\# (\1_{E^*\cap M} \L^2)$ is concentrated on $N$.

\emph{Sufficiency.} Let $N_1\subset \R$ be a negligible set on which $f_\#(\1_{M\cap E^*} \cdot \L^2)$ is concentrated. 
By Proposition~\ref{essential-level-sets-and-connected-components}, there exists a negligible set $N_2\subset \R$ such that $\{f=t\}^* = E_t^*$ for all $t\in N_2^c$.
Let $N:=N_1\cup N_2$.
Then the set $X:= (M\cap E^*)\setminus f^{-1}(N)$ is $\L^2$-negligible 
(since $f_\# (\1_X \L^2) = 0$)
and $(M\cap E^*) \setminus X \subset f^{-1}(N)$. Hence
for all $t\in N^c$ we have
\begin{equation*}
\{f=t\}^*\cap M\setminus X = E_t^* \cap (M\cap E^*) \setminus X \subset E_t^* \cap f^{-1}(N) \subset \{f=t\} \cap f^{-1}(N)= \emptyset.
\end{equation*}
\end{proof}

From \eqref{essential-level-set-vs-standard-one} and Proposition~\ref{oldWSP=newWSP} we get
\begin{cor}
\label{pushforward-monotone-WSP}
Let $f\in W^{1,1}(\R^2) \cap W^{1,2}(\R^2)$.
If $f$ is monotone, then $f$ has the weak Sard property~\eqref{WSP} if and only if \eqref{RSP} holds.
\end{cor}

Let $f,g \in W^{1,1}(\R^2) \cap W^{1,2}(\R^2)$ be such that
$|Df| \perp |Dg|$. 
Then $|Dg|$ is concentrated on $\{\nabla f=0\}$, in other words, 
$\{\nabla g \ne 0\} \subset \{\nabla f = 0\}$ modulo $\L^2$.
By Lemma~\ref{lemma-mutual-weak-Sard-property} we also have
\begin{equation}
	f_\# (\1_{\{\nabla g \ne 0\}}\L^2) \perp \L^1. \label{weak-mutual-wsp}
\end{equation}
Now we are going to strengthen it using the fine properties of the level sets, developed in the previous section.

\begin{prop}\label{mutual weak Sard property}
Let $f,g \in W^{1,1}(\R^2) \cap W^{1,2}(\R^2)$ with $|Df| \perp |Dg|$. 
Then there exists a negligible set $N\subset \R$
such that for all $x\in E_g := \bigcup_{t\notin N} \{g=t\}^*$
we have $f(x)\notin \reg f.$
In particular,
the level sets of $f$ almost avoid the set $E_g$. 
Consequently, the level sets of $f$ almost avoid the set $\{\nabla g\ne0\}$ and 
\begin{equation}\label{nut-1}
	f_\# (\1_{E_g}\L^2) \perp \L^1.
\end{equation}
\end{prop}

\begin{proof}
By Theorems \ref{constancy-along-connected-components} and \ref{constancy-on-connected-sets}, there exists
a negligible set $N_g \subset \R$ such that $N_g^c \subset \reg g$ and for all $t\in N_g^c$ and 
for any $\gamma \in\cd^M \{g>t\}$ there exists a constant $c=c(t,\gamma)$
such that $f(x) = c(t,\gamma)$ on $\gamma([0,1])$. (We emphasize, that the last equality holds {\it everywhere} on $\gamma([0,1])$, and {\it not only almost everywhere}, as in the claim of previous Lemma~~\ref{lemma-mutual-weak-Sard-property}.)
Let $A$ denote the set of all $t\in N_g^c$ for which there exists $\gamma \in\cd^M \{g>t\}$
such that $c(t,\gamma) \in \reg f$. By Lemma~\ref{lemma-mutual-weak-Sard-property} we have $\L^1(A) = 0.$
Let $N:=N_g \cup A.$ Then on the set $E_g:=\bigcup_{s\notin N} \{g=t\}^*$ the function $f$ takes only non-regular values. But the set of non-regular values of~$f$ has $1$-measure zero (by Proposition~\ref{reg-vals}\,). This implies, that the level sets of $f$ almost avoid the set~$E_g$ and also the assertion~\eqref{nut-1}. Finally, by Coarea formula the residual set $E_g\setminus \{\nabla g\ne0\}$ has the~$\L^2$-measure zero. Therefore, the level sets of $f$ almost avoid the set~$\{\nabla g\ne 0\}$ as well.  
\end{proof}

\begin{prop}\label{avoidance-criterea}
Suppose that $M\subset \R^2$, $f\in W^{1,1}(\R^2) \cap W^{1,2}(\R^2)$, and let $\{f_i\}_{i\in I}$ be a decomposition of $f$ given by Theorem~\ref{thm:BT}.
Then the level sets of $f$ almost avoid $M$ if and only if for all $i\in I$
the level sets of $f_i$ almost avoid $M$.
\end{prop}

\begin{proof}
\emph{Necessity.}
Fix $i \in I$ and let $g:=f_i$.
Let $N_f\subset \R$ and $X\subset \R^2$ be negligible sets such that $N_f^c\subset\reg f$ and for all $s\in \R \setminus N_f$
we have $\{f=s\}^* \cap M \setminus X = \emptyset$.
Without loss of generality we may assume,  that $N_f$ is a~$G_\delta$-set and hence $f^{-1}(N_f)$ is measurable.

By Theorem \ref{constancy-along-connected-components} and by monotonicity of $g$, there exists a negligible set $N_g\subset \R$ such that for all $t\in \reg g \setminus N_g$ there exists a constant $c=c(t)$ satisfying the inclusion 
\begin{equation*}
 \{g=t\}^* \subset \{f=c(t)\}.
\end{equation*}

Suppose that $\{g=t\}^*$ intersects $M\setminus X$. If $c(t)\notin N_f$, then also $\{f=c(t)\}^*$ intersects $M\setminus X$, which is not possible.
Hence $c(t) \in N_f$ and $\{g=t\}^* \subset f^{-1}(N_f)$. Then $\H^1(f^{-1}(N_f) \cap \{g=t\})>0$.
Thus
\begin{equation*}
A:= \{t\in \reg g \setminus N_g \;:\; \{g=t\}^* \cap M\setminus X \ne \emptyset\}
\end{equation*}
is a subset of
\begin{equation*}
B:= \{t \in \reg g \setminus N_g \;:\; \H^1(f^{-1}(N_f) \cap \{g=t\})>0\}.
\end{equation*}

By the Coarea formula for $g$ we have
\begin{equation*}
\int_{\R} \H^1(f^{-1}(N_f) \cap \{g=t\}) \, dt
=\int_{f^{-1}(N_f)} |\nabla g| \, dx \le \int_{f^{-1}(N_f)}|\nabla f| \, dx,
\end{equation*}
since $|\nabla g| \le |\nabla f|$.
On the other hand, by the Coarea formula for $f$ we have
\begin{equation*}
\int_{f^{-1}(N_f)}|\nabla f| \, dx = \int_{N_f} \H^1(\{f=t\}) \, dt = 0.
\end{equation*}
Hence,
\begin{equation*}
\int_{\R} \H^1(f^{-1}(N_f) \cap \{g=t\}) \, dt = 0.
\end{equation*}
This means that $\L^1(B)=0$ and, consequently, $\L^1(A) = 0$.

\emph{Sufficiency.}
For every $i\in I$ let $N_i\subset \R$ and $X_i \subset \R^2$ be negligible sets such that
$N_i^c \subset \reg f_i$ and for all $s\in N_i^c$ we have $\{f_i=s\} \cap (M\setminus X_i) = \emptyset$. Without loss of generality we may assume, that  $N_i$ is a~$G_\sigma$-set and hence the~preimage $f_i^{-1}(N_i)$ is measurable.
Put $X:= \bigcup_{i\in I} X_i$.

For every $i\in I$ by Theorem~\ref{constancy-along-connected-components} 
there exists a negligible set $Y_i \subset \R$
such that $Y_i^c \subset \reg f$ and for all $t\in Y_i^c$ and for all $\gamma \in \cd^M\{f>t\}$ there exists $c_i=c_i(t, \gamma)$ such that $f_i=c_i$ on $\gamma([0,1])$. Let $Y:= \bigcup_{i\in I} Y_i$.

Take $t\in Y^c$ and suppose that $\{f=t\}^*$ intersects $M\setminus X$.
Then there exists $\gamma \in \cd^M\{f>t\}$ such that $\Gamma:=\gamma([0,1])$ intersects $M\setminus X$.
If $c_i=c_i(t,\gamma) \in N_i^c$, then $\{f_i = c_i\}^*$ intersects $M\setminus X,$ which is not possible.
Hence $c_i\in N_i$. Thus $\Gamma \subset G$, where
\begin{equation*}
G:= \bigcap_{i\in I} f_i^{-1}(N_i)
\end{equation*}
Since $\H^1(\Gamma)>0$, we also have $\H^1(G\cap \{f=t\}^*)>0$.
Therefore, the set
\begin{equation*}
A := \{t \in Y^c : \{f=t\}^* \cap M\setminus X \ne \emptyset\}
\end{equation*}
is a subset of the set
\begin{equation*}
B := \{t \in \reg f : \H^1(G\cap\{f=t\})>0\}.
\end{equation*}
By the Coarea formula for all $i\in I$ we have
\begin{equation*}
\int_{G} |\nabla f_i|\, dx \le \int_{N_i} \H^1(G \cap \{f_i = s\}) \, ds = 0,
\end{equation*}
hence $\int_{G}|\nabla f|\, dx = \sum_{i\in I}\int_{G} |\nabla f_i| \, dx = 0.$
On the other hand, by the Coarea formula
\begin{equation*}
\int_{G} |\nabla f|\, dx = \int_{B} \H^1(G\cap\{f=t\}) \, dt.
\end{equation*}
Since $\H^1(G\cap\{f=t\})>0$ for all $t\in B$, we have $\L^1(B)=0$.
Consequently, $\L^1(A)=0$.
\end{proof}

\begin{rem}\label{roles}
We emphasize, that the Coarea formula itself is not sufficient to prove the assertions of Proposition~\ref{avoidance-criterea}. The essential role here is played also by Theorem~\ref{constancy-along-connected-components}, which fails in general for functions from~$W^{1,p}(\R^2)$ with $p<2$. 
\end{rem}

\begin{rem}\label{avoidance-of-countable-union}
Suppose that $f\in W^{1,1}(\R^2)$ with $\nabla f \in L^2(\R^2)$ and $\{M_i\}_{i\in I}$ is an at most countable family of sets $M_i \subset \R^2$. If for all $i\in I$ the level sets of $f$ almost avoid $M_i$,
then the level sets of $f$ almost avoid $M:=\bigcup_{i\in I} M_i$.
\end{rem}

Indeed, let $N_i\subset \R$ and $X_i \subset \R^2$ be negligible sets such that
all $t\in N_i^c$ are regular values of $f$ and for all such $t$ we have $\{f=t\}^* \cap M\setminus X_i = \emptyset$.
Then the sets $N:=\bigcup_{i\in I} N_i$ and $X:=\bigcup_{i\in I} X_i$ are negligible,
all $t\in N^c$ are regular values of $f$, and for all such $t$ we have $\{f=t\}^* \cap M\setminus X = \emptyset$.

\begin{prop}\label{WSP-for-monotone-components}
Suppose that $f\in W^{1,1}(\R^2)$ with $\nabla f \in L^2(\R^2)$.
Let $\{f_i\}_{i\in I}$ be a decomposition of $f$ given by Theorem~\ref{thm:BT}.
Then $f$ has the weak Sard property if and only if for all $i\in I$ the function $f_i$ has the weak Sard property.
\end{prop}

\begin{proof}
\emph{Necessity.}
Suppose that the level sets of $f$ almost avoid the set $\{\nabla f = 0\}$.
Then by Proposition~\ref{avoidance-criterea} for all $i\in I$ the level sets of $f_i$
almost avoid the set $\{\nabla f = 0\}$. Let us fix $i\in I$.

By Proposition~\ref{mutual weak Sard property} for all $j\in I \setminus\{i\}$
the level sets of $f_i$ almost avoid the set $\{\nabla f_j \ne 0\}$.
Therefore, by Remark~\ref{avoidance-of-countable-union} the level sets of $f_i$
almost avoid the set
\begin{equation*}
\{\nabla f = 0\} \cup \bigcup_{j\ne i} \{\nabla f_j \ne 0\}
=\left(\{\nabla f\ne 0\} \cap \bigcap_{j\ne i} \{\nabla f_j = 0\}\right)^c
=\left(\{\nabla f_i \ne 0\}\right)^c = \{\nabla f_i = 0\},
\end{equation*}
since $|\nabla f| = \sum_{i\in I} |\nabla f_i|$.

\emph{Sufficiency.}
Suppose that for all $i\in I$ the level sets of $f_i$ almost avoid the set $\{\nabla f_i=0\}$.
Since $|\nabla f| = \sum_{i\in I} |\nabla f_i|$, we have $\{\nabla f = 0\} = \bigcap_{i\in I} \{\nabla f_i = 0\}$. Hence, for all $i$ the level sets of $f_i$ almost avoid the set $\{\nabla f = 0\}$.
By Proposition~\ref{avoidance-criterea} this implies that the level sets of $f$ almost avoid $\{\nabla f = 0\}$.
\end{proof}

\section{Uniqueness of weak solutions of the continuity equation}\label{sec:uniq}

As in the previous section, we consider a compactly supported divergence-free vector field ${\ve\colon \R^2 \to \R^2}$
such that $\ve \in L^2(\R^2)$.
In this section we assume, that $f\in W^{1,2}(\R^2)$ is the stream function of~$\ve$
and $\{f_i\}_{i\in I}$ is a decomposition of $f$ into monotone $W^{1,2}$ functions given by Theorem~\ref{thm:BT}.
Recall, that by our agreement $f$ and each of $f_i$ are precisely represented.

\subsection{Proof of Theorem~\ref{uniqueness-criterea}: sufficiency of the weak Sard property.}
\label{un-sect-1}

First we prove that if $f$ has the weak Sard property, then \eqref{div(rho v)=0}
with a source term can be splitted into similar equations with the vector fields $\nabla^\perp f_i$
(this important fact implies a certain locality property for the divergence operator, cf. \cite[Proposition 6.1]{BBG2016}):

\begin{prop}\label{div-wsp-rhs}
Suppose that $f$ has the weak Sard property.
Then $R \in L^\infty(\R^2)$ and $Q \in L^\infty(\R^2)$ satisfy
\begin{equation}
\label{divergence-equation-with-rhs}
\dive(R \nabla^\perp f) = Q.
\end{equation}
if and only if for all $i\in I$
\begin{equation}
\label{equation-for-nonzero-components}
\dive(R \nabla^\perp f_i) = \1_{\{\nabla f_i \ne 0\}} Q
\end{equation}
and
\begin{equation}
\label{equation-for-zero-component}
Q=0 \quad\text{a.e. on } \quad \{\nabla f = 0\}.
\end{equation}
\end{prop}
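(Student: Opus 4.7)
Summing \eqref{equation-for-nonzero-components} over $i \in I$ yields \eqref{divergence-equation-with-rhs}. Indeed, Theorem~\ref{thm:BT}(ii),(iv) rephrased via $Df_i = \nabla f_i\,\L^2$ implies that the sets $\{\nabla f_i \ne 0\}$ are pairwise $\L^2$-disjoint, $\nabla f = \sum_i \nabla f_i$ a.e., and $|\nabla f| = \sum_i |\nabla f_i|$; in particular $\sum_i \1_{\{\nabla f_i \ne 0\}} = \1_{\{\nabla f \ne 0\}}$ a.e. Hence the sum of the right-hand sides of \eqref{equation-for-nonzero-components} equals $\1_{\{\nabla f \ne 0\}}Q$, which together with \eqref{equation-for-zero-component} equals $Q$. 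Convergence of the series of divergences in $\mathscr{D}'$ follows from $\sum_i \|\nabla f_i\|_2^2 \le \|\nabla f\|_2^2$ (pointwise Pythagoras from the disjointness) and $R \in L^\infty$.

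\textbf{Necessity, setup.} Fix $i \in I$ and set $h := f_i$, $g := f - h$. The disjointness of the gradient supports gives $\nabla^\perp f \cdot \nabla h = 0$ a.e.\ and $|Dh| \perp |Dg|$. I plan to test \eqref{divergence-equation-with-rhs} against $\phi = \fhi\,\psi(h)$ with $\fhi \in C_c^\infty(\R^2)$ and $\psi \in C_c^\infty(\R)$ (an admissible test function, since $\phi \in W^{1,2}$ with compact support, $R\nabla^\perp f \in L^2$ and $Q\in L^\infty$). Expanding $\nabla\phi$, the $\psi'(h)$-term drops by orthogonality. Splitting $\nabla^\perp f = \nabla^\perp h + \nabla^\perp g$, then applying the coarea formula to the $\nabla^\perp h$ piece and the change of variables $t = h(x)$ to the remaining integrals yields, after varying $\psi$ in a countable dense family, the measure-valued identity on $\R$
\begin{equation*}
a\,\L^1 + h_\#\mu = -h_\#\nu,
\end{equation*}
where $a(t) := \int_{\{h=t\}} R\,\bm\tau_h \cdot \nabla\fhi\, d\H^1$, $\mu := R\,\nabla^\perp g \cdot \nabla\fhi\,\L^2$, $\nu := Q\fhi\,\L^2$, and $\bm\tau_h := \nabla^\perp h / |\nabla h|$ on $\{\nabla h \ne 0\}$.

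\textbf{Necessity, the singular/absolutely continuous split.} Write $\nu = \nu^0 + \nu^1$ with $\nu^0 := \nu\rest\{\nabla h = 0\}$ and $\nu^1 := \nu\rest\{\nabla h \ne 0\}$. By coarea, $h_\#\nu^1 = b\,\L^1$ with $b(t) := \int_{\{h=t\}\cap\{\nabla h \ne 0\}} \tfrac{Q\fhi}{|\nabla h|}\,d\H^1$. Since $|Dh|\perp|Dg|$, Proposition~\ref{mutual weak Sard property} (applied with $h$, $g$ in place of $f$, $g$) gives $h_\#(\L^2\rest\{\nabla g \ne 0\}) \perp \L^1$; as $|\mu|$ has bounded density with respect to $\L^2\rest\{\nabla g \ne 0\}$, this yields $h_\#|\mu|\perp \L^1$. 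For $\nu^0$, the WSP of $f$ descends to the monotone component $h$ by Proposition~\ref{WSP-for-monotone-components}, so Corollary~\ref{pushforward-monotone-WSP} gives $h_\#(\L^2\rest\{\nabla h = 0\}) \perp \L^1$, whence $h_\#|\nu^0|\perp \L^1$. Rewriting the identity as $(a+b)\L^1 = -(h_\#\mu + h_\#\nu^0)$ and separating the absolutely continuous and singular parts with respect to $\L^1$ forces $a(t) + b(t) = 0$ for $\L^1$-a.e. $t$. Choosing $\fhi$ in a countable dense subset of $C_c^\infty(\R^2)$ fixes an exceptional null set of $t$ independent of $\fhi$, and approximation extends the identity to every $\fhi \in C_c^\infty(\R^2)$. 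Integrating $a(t)+b(t)=0$ in $t$ and reading the coarea formula backwards produces \eqref{equation-for-nonzero-components} tested against $\fhi$; by arbitrariness of $\fhi$ the distributional identity \eqref{equation-for-nonzero-components} holds. Finally, summing \eqref{equation-for-nonzero-components} over $i \in I$ gives $\dive(R\nabla^\perp f) = \1_{\{\nabla f \ne 0\}}Q$, and comparison with \eqref{divergence-equation-with-rhs} yields \eqref{equation-for-zero-component}.

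\textbf{Main obstacle.} The essential new ingredient compared to the homogeneous case (Proposition~\ref{div-decomp}) is the singular part $h_\#|\nu^0|$ coming from the portion of $Q$ supported over the critical set of $h$. It is only the weak Sard property, transported to the monotone components through Proposition~\ref{WSP-for-monotone-components} and then flattened to the pointwise singularity in Corollary~\ref{pushforward-monotone-WSP}, that forces $h_\#|\nu^0|\perp \L^1$. Without WSP, the pushforward $h_\#\nu^0$ could develop an absolutely continuous component and corrupt the a.c./singular dichotomy on which the whole separation argument rests.
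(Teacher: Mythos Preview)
Your proof is correct and follows essentially the same approach as the paper's: test against $\fhi\,\psi(h)$, use Proposition~\ref{mutual weak Sard property} for the $\nabla^\perp g$-term, and invoke the weak Sard property of the monotone component $h$ (via Proposition~\ref{WSP-for-monotone-components} and Corollary~\ref{pushforward-monotone-WSP}) to make the critical-set contribution $h_\#\nu^0$ singular, then separate the absolutely continuous and singular parts. The paper packages things slightly differently (absorbing your $b$ into its $a$ and your $\nu^0$ into its $\mu$), but the substance is identical; one small imprecision is that $|\mu|$ does not have \emph{bounded} density with respect to $\L^2\rest\{\nabla g\ne 0\}$, only an $L^2$ density, but absolute continuity is all you need and use.
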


\begin{proof}
By Proposition~\ref{WSP-for-monotone-components} we know that for all $i\in I$
the function $f_i$ has the weak Sard property.

Then arguing as in the proof of Proposition~\ref{div-decomp}
we deduce that \eqref{equation-for-nonzero-components} holds for all $i$.
Indeed, \eqref{disint-div-eq} now takes the form
\begin{equation}
\int R \nabla^\perp h \cdot (\nabla \fhi) \psi(h)  \, dx
+ \int R \nabla^\perp g \cdot (\nabla \fhi) \psi(h) \, dx =-  \int Q \fhi \psi(h) \, dx
\label{disint-div-eq2}
\end{equation}
for the same $h:= f_i$ and $g:=f-h$.
We decompose the right hand side of~\eqref{disint-div-eq2} using the identity $1 = \1_{\{\nabla h = 0\}} + \1_{\{\nabla h \ne 0\}}$ a.e.
And similar to the proof of Proposition~\ref{div-decomp} we introduce the measure
\begin{equation*}
\mu = R \nabla^\perp g \cdot \nabla \fhi \cdot \L^2 + Q \fhi \1_{\{\nabla h = 0\}} \cdot \L^2 =: \mu_1 + \mu_2
\end{equation*}
(since $\ve$ is compactly supported, we can actually restrict $\L^2$ to some set with finite measure, so $\mu$ is finite).
Then \eqref{disint-div-eq2} can be written as
\begin{equation*}
\int R \nabla^\perp h \cdot (\nabla \fhi) \psi(h)  \, dx + \int_{\{\nabla h \ne 0\}} Q \fhi \psi(h)\, dx
+ \int \psi(h) \, d\mu = 0.
\end{equation*}
As before, by \eqref{nut-2} we have $h_\# |Dg| \perp \L^1.$
Hence $h_\# \mu_1 \perp \L^1$.
On the other hand, by the weak Sard property $h_\# \mu_2 \perp \L^1$ (see Corollary~\ref{pushforward-monotone-WSP}).
Then we can obtain the required~\eqref{equation-for-nonzero-components} just following the last part of the proof of Proposition~\ref{div-decomp} taking now 
\begin{equation*}
a(t):=\int \left(R \frac{\nabla^\perp h}{|\nabla h|} \cdot \nabla \fhi
+\frac{Q\fhi}{|\nabla h|}\right) \, d\H^1 \rest \{h = t\}.
\end{equation*}
Finally, summing the corresponding equations \eqref{equation-for-nonzero-components} for all $i$,  we obtain $\dive (R \nabla^\perp f) = \1_{\{\nabla f \ne 0\}} Q$. Then \eqref{divergence-equation-with-rhs} implies \eqref{equation-for-zero-component}.
\end{proof}

\begin{prop}\label{div-wsp-rhs-mono}
Suppose that $f\in W^{1,2}(\R^2)$ is monotone.
Then functions $R \in L^\infty(\R^2)$ and $Q \in L^\infty(\R^2)$ satisfy
\begin{equation}
\label{steady-contin-rhs}
\dive(R \nabla^\perp f) = \1_{\{\nabla f \ne 0\}}Q.
\end{equation}
iff for a.e. ${y}\in \reg f$ we have
\begin{equation}
\label{div-along-curve}
(R\circ \gamma)' = a \cdot Q\circ \gamma,
\end{equation}
where $\gamma\colon [0,1]\to \R^2$ is a canonical parametrization of $\{f={y}\}^*$
and
\begin{equation}
\label{a-def}
a := \frac{|\gamma'|}{|\nabla f|\circ \gamma}
\end{equation}
belongs to $L^1([0,1])$.
\end{prop}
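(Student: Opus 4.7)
The plan is to disintegrate \eqref{steady-contin-rhs} along the level sets of $f$ via the coarea formula, reducing it to a family of one-dimensional divergence equations on the essential level sets $\{f=z\}^*$, to which Proposition~\ref{steady continuity equation along curve} applies directly. Monotonicity of $f$ ensures that for a.e.\ regular value $z$ the superlevel set $\{f>z\}$ is simple, so by Proposition~\ref{generalized-inner-normal-for-simple-set} the family $\mathfrak{cd}^M\{f>z\}$ consists of a single canonical parametrization $\gamma_z$ of $\{f=z\}^*$; by Remark~\ref{rem-gen-inner-normal} the unit tangent $\gamma_z'/|\gamma_z'|$ is a unit multiple of $\nabla^\perp f/|\nabla f|$ at $\H^1$-a.e.\ point of $\{f=z\}^*$.

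For the forward direction I would test \eqref{steady-contin-rhs} against $\phi(x)=\fhi(x)\psi(f(x))$ with $\fhi\in C_c^\infty(\R^2)$ and $\psi\in C_c^\infty(\R)$ (admissible because $R\ve\in L^2$ and $\phi$ is a compactly supported $W^{1,2}$-function). Since $\nabla^\perp f\cdot\nabla f=0$ a.e., the LHS reduces to $\int R\,\nabla^\perp f\cdot\nabla\fhi\cdot\psi(f)\,dx$; applying the coarea formula to both sides and dividing by $|\nabla f|$ on $\{\nabla f\ne 0\}$ yields
\begin{equation*}
\int\psi(t)\left[\int_{\{f=t\}}\frac{R\,\nabla^\perp f\cdot\nabla\fhi-Q\fhi}{|\nabla f|}\,d\H^1\right]dt=0.
\end{equation*}
By arbitrariness of $\psi$ the inner bracket vanishes for a.e.\ $t$, and choosing $\fhi$ in a countable dense subset of $C_c^\infty(\R^2)$ I would get a single null set outside of which this holds for every smooth $\fhi$. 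Restricting further to those $t$ which are regular values satisfying \eqref{essential-level-set-vs-standard-one} and lie outside the weak-Sard exceptional set (so that $\H^1$-a.e.\ point of $\{f=t\}^*$ lies in $\{\nabla f\ne 0\}$, by Corollary~\ref{pushforward-monotone-WSP}), the vanishing of the bracket for all $\fhi$ reads $\dive(R\,\bm\mu_{\gamma_t})=\frac{Q}{|\nabla f|}\H^1\rest\gamma_t([0,1])$ up to a sign, and Proposition~\ref{steady continuity equation along curve} together with the change-of-variables formula transforms this into precisely \eqref{div-along-curve}. Integrability $a\in L^1[0,1]$ for a.e.\ $t$ follows from $\int\bigl(\int_{\{f=t\}^*}|\nabla f|^{-1}\,d\H^1\bigr)dt=\L^2(\{\nabla f\ne 0\})<\infty$ by coarea and compactness of the support of $\nabla f$.

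The converse is the same chain read backwards: Proposition~\ref{steady continuity equation along curve} promotes \eqref{div-along-curve} to the slice-wise distributional equation on $\{f=z\}^*$ for a.e.\ $z$, and integrating against $\psi(z)$ by coarea reassembles $\int R\,\nabla^\perp f\cdot\nabla\fhi\cdot\psi(f)\,dx=\int\1_{\{\nabla f\ne 0\}}Q\,\fhi\,\psi(f)\,dx$; density of the tensor products $\fhi\,\psi(f)$ in the space of admissible test functions then gives \eqref{steady-contin-rhs}. The main subtlety I expect is the careful use of the weak Sard property: it is precisely what ensures $f_\#(\1_{\{\nabla f=0\}}\L^2)\perp\L^1$, so that in both directions the critical level sets contribute nothing and no defect measure concentrates on critical level values when the slice-wise equations are integrated back up.
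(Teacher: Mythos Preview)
Your approach is correct and essentially identical to the paper's: test against $\fhi(x)\psi(f(x))$, apply coarea on both sides, deduce the slice equation $\dive(R\bm\mu_{\gamma_z})=\frac{Q}{|\nabla f|}\H^1\rest\{f=z\}$ for a.e.\ $z$, and invoke Proposition~\ref{steady continuity equation along curve}; the $L^1$ bound on $a$ via coarea is also the same.

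Two minor remarks. First, the weak Sard property is not actually used in the paper's proof of this proposition (nor is it needed in yours): the right-hand side of \eqref{steady-contin-rhs} already carries the factor $\1_{\{\nabla f\ne 0\}}$, so coarea applies to it directly, and by Definition~\ref{def-reg}/Remark~\ref{rem-gen-inner-normal} one has $\nabla f\ne 0$ $\H^1$-a.e.\ on $\{f=z\}$ for every regular value $z$ without invoking WSP. Second, for the converse you do not need any density of products $\fhi\,\psi(f)$; simply apply coarea to a \emph{general} test function $\Phi\in C_c^\infty(\R^2)$ on both sides of \eqref{steady-contin-rhs} and use that the slice equation (tested against $\Phi$) holds for a.e.\ $z$.
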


\begin{proof}
We argue as in the proof of Proposition~\ref{div-monotone-no-rhs}. By Coarea formula for any compactly supported smooth functions $\fhi, \psi$
\begin{equation*}
\int R \nabla^\perp f \cdot (\nabla \fhi) \psi(f) \, dx
= \int \int R \bm \tau \cdot \nabla \fhi \, d\H^1 \rest \{f = {y}\} \psi({y}) \, d{y}.
\end{equation*}
and
\begin{equation*}
\int \1_{\{\nabla f\ne 0\}} Q \fhi \psi(f) \, dx
= \int \int \frac{Q}{|\nabla f|} \fhi \, d\H^1 \rest \{f = {y}\} \psi({y}) \, d{y}.
\end{equation*}
Hence \eqref{steady-contin-rhs} holds if and only if for a.e. $t$
\begin{equation*}
\div (R \bm \tau \H^1 \rest \{f={y}\}) = \frac{Q}{|\nabla f|}\H^1 \rest \{f = {y}\}.
\end{equation*}
Let $\gamma$ be a canonical parametrization of $\{f={y}\}^*$.
By the area formula, we have
\begin{equation*}
\gamma_\# \left(\frac{|\gamma'|}{|\nabla f|\circ \gamma}\cdot (Q\circ \gamma) \L^1\right) = \frac{Q}{|\nabla f|}\H^1 \rest \{f={y}\}.
\end{equation*}
Hence by Proposition~\ref{steady continuity equation along curve} we obtain
\eqref{div-along-curve}.

It remains to note that by the area formula
\begin{equation*}
\int_0^1 a(s)\, ds \le \|Q\|_\infty \int\limits_{\{f={y}\}} \frac{1}{|\nabla f|} \, d\H^1,
\end{equation*}
which is finite (for a.e. ${y}$) by the Coarea formula.
\end{proof}

Below denote by $\T$ the 1-dimensional torus (or, equivalently, the~unit circle).

\begin{lemma}\label{1d-renorm}
Suppose that $T>0$, $a\in L^1(\T)$, and
$\rho \colon [0,T]\times \T\to \R$ is a bounded measurable solution to
\begin{equation}
\label{1d-transport}
\d_t (a(s) \rho(t,s)) + \d_s \rho(t,s) = 0
\end{equation}
with bounded initial condition $\rho|_{t=0} = \rho_0$.
Then for any~$\beta \in C^1(\R)$ the function $\beta(\rho)$ solves \eqref{1d-transport}
with the initial condition $\beta(\rho_0)$.
\end{lemma}

\goodbreak

\begin{proof}
Indeed, mollifying $\rho$ with respect to the variable $t$,
we get
\begin{equation}\label{mol1}
\d_t \bigl(a(s) \rho_\eps(t,s)\bigr) + \d_s \bigl(\rho_\eps(t,s)\bigr).
\end{equation}
By construction, $\partial_t\rho_\eps\in L^\infty([0,T]\times \T)$. Then, from equation~\eqref{mol1} we conclude, that $\partial_s\rho_\eps\in L^1([0,T]\times \T)$. 
Hence, $\rho_\eps\in W^{1,1}([0,T]\times \T)$. Therefore, the distributional equation \eqref{mol1} can be understood in the classical sense (pointwise a.e. in~$[0,T]\times \T$\,). 
Then $\beta(\rho_\eps)\in W^{1,1}([0,T]\times \T)$ as well, and by usual chain rule property we get
\begin{equation*}
\d_t (a \beta(\rho_\eps)) + \d_s \beta(\rho_\eps) = 0.
\end{equation*}
It remains to pass to the limit as $\eps \to 0$.
\end{proof}

\begin{rem} Propositions~\ref{div-wsp-rhs-mono} and \ref{1d-renorm} illustrate, that the main obstacle for the renormalization is the critical set $\{\nabla f=0\}$. The above Proposition~\ref{div-wsp-rhs} allows us to remove this critical set from the consideration, and this plays the key role in the arguments below. 
\end{rem}

Now we are going to show that the weak Sard property of $f$ is necessary and sufficient
for $\ve = \nabla^\perp f$ to have the renormalization property (and consequently for uniqueness of bounded weak solutions of the Cauchy problem for the continuity equation).
First we prove the following analog of Proposition \ref{div-wsp-rhs}:

\begin{prop}\label{reduction-of-CE-to-mono}
Suppose that $f$ has the weak Sard property.
Then $\rho \in L^\infty(0, T; L^\infty(\R^2))$ solves \eqref{continuity-equation}
with the initial condition $\rho_0$ if and only if:
\begin{enumerate}
 \item for any $i\in I$ the function $\rho_i := \rho \cdot \1_{\{\nabla f_i \ne 0\}}$
 solves
 \begin{equation*}
  \d_t \rho_i + \dive (\rho_i \nabla^\perp f_i) = 0
 \end{equation*}
 with the initial condition $\rho_i|_{t=0}=\rho_0 \cdot \1_{\{\nabla f_i \ne 0\}}$;
 \item for a.e. $t$ and we have $\rho(t, \cdot )\1_{\{\nabla f =0\}} = \rho_0 \1_{\{\nabla f =0\}}$.
\end{enumerate}
\end{prop}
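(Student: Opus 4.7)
The natural strategy is to reduce the time-dependent statement to the stationary one of Proposition~\ref{div-wsp-rhs} by integrating in time. Set $R(t,x):=\int_0^t \rho(s,x)\,ds$; this is bounded and compactly supported in $x$ uniformly in $t$. Testing \eqref{continuity-equation}--\eqref{continuity-equation-initial-condition} against $\varphi(x)\eta(s)$ with $\varphi\in C_c^\infty(\R^2)$ and $\eta\in C_c^\infty([0,T))$ approximating $\1_{[0,t]}$, we get, for a.e. $t\in[0,T]$, the steady divergence identity
$$\dive\bigl(R(t,\cdot)\,\nabla^\perp f\bigr) \;=\; \rho_0 - \rho(t,\cdot) \;=:\; Q(t,\cdot)$$
in $\mathscr{D}'(\R^2)$. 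Since $f$ has the weak Sard property, Proposition~\ref{div-wsp-rhs} applies directly and yields, for every $i\in I$,
$$\dive\bigl(R(t,\cdot)\,\nabla^\perp f_i\bigr) \;=\; \1_{\{\nabla f_i\ne 0\}}\bigl(\rho_0-\rho(t,\cdot)\bigr),$$
together with $Q(t,\cdot)=0$ a.e. on $\{\nabla f=0\}$. The second conclusion is precisely assertion~(2).

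For assertion~(1) the key observation is that $\nabla^\perp f_i=0$ a.e. on $\{\nabla f_i=0\}$, so the multiplication by $\nabla^\perp f_i$ is unaffected by inserting the characteristic factor:
$$R(t,\cdot)\,\nabla^\perp f_i \;=\; \bigl(\1_{\{\nabla f_i\ne 0\}}R(t,\cdot)\bigr)\nabla^\perp f_i \;=\; R_i(t,\cdot)\,\nabla^\perp f_i\quad\text{a.e.,}$$
where $R_i(t,x):=\int_0^t\rho_i(s,x)\,ds$. Hence $R_i(t,\cdot)$ itself satisfies the stationary equation with right-hand side $\rho_0\1_{\{\nabla f_i\ne 0\}}-\rho_i(t,\cdot)$, and reversing the time integration (again by testing against $\varphi(x)\eta(s)$ for arbitrary smooth $\eta$ compactly supported in $[0,T)$) produces exactly the weak formulation of the continuity equation for $\rho_i$ with initial datum $\rho_0\1_{\{\nabla f_i\ne 0\}}$.

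For the converse, sum the equations in~(1) over $i$. By Theorem~\ref{thm:BT}(iv) the sets $\{\nabla f_i\ne 0\}$ are pairwise disjoint modulo $\L^2$, hence $\sum_i\1_{\{\nabla f_i\ne 0\}}=\1_{\{\nabla f\ne 0\}}$, and combined with $\nabla f=\sum_i\nabla f_i$ the sum of the equations in~(1) reads
$$\d_t\bigl(\rho\,\1_{\{\nabla f\ne 0\}}\bigr)+\dive(\rho\,\ve)=0,\qquad \bigl(\rho\,\1_{\{\nabla f\ne 0\}}\bigr)\bigr|_{t=0}=\rho_0\,\1_{\{\nabla f\ne 0\}}.$$
Adding the trivial identity $\d_t(\rho\,\1_{\{\nabla f=0\}})=0$ with initial datum $\rho_0\,\1_{\{\nabla f=0\}}$, which is precisely~(2), restores \eqref{continuity-equation}--\eqref{continuity-equation-initial-condition}.

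The only nontrivial step is the single invocation of Proposition~\ref{div-wsp-rhs}, where the weak Sard property of $f$ (and, through Proposition~\ref{WSP-for-monotone-components}, of every $f_i$) is indispensable; everything else is routine passage between the time-integrated and time-differentiated forms of the continuity equation together with the linear algebra of the monotone decomposition. The main technical care is in verifying that the approximation $\eta\to \1_{[0,t]}$ is legitimate for a.e. $t$, which follows from the standard fact that bounded weak solutions admit a representative $t\mapsto\rho(t,\cdot)$ which is weakly-$*$ continuous into $L^\infty(\R^2)$, so that pointwise-in-$t$ statements make sense.
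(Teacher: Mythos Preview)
Your proof is correct and follows essentially the same strategy as the paper: reduce the time-dependent continuity equation to the stationary divergence identity of Proposition~\ref{div-wsp-rhs} by integrating in time, then reassemble. The only cosmetic difference is that the paper pairs $\rho$ against a general $\psi\in C_c^\infty([0,T))$ to produce $R(x)=\int_0^T\rho(t,x)\psi(t)\,dt$ and $Q(x)=\psi(0)\rho_0(x)+\int_0^T\rho(t,x)\psi'(t)\,dt$, so that both directions come out as clean ``iff'' statements without needing the weak-$*$ continuity of $t\mapsto\rho(t,\cdot)$ or a separate reversal step; your choice $\eta\approx\1_{[0,t]}$ amounts to a specific family of such $\psi$ and leads to the same conclusion with slightly more bookkeeping.
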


\begin{proof}
For any $\psi \subset C_c^\infty([0,T))$ let us define
\begin{equation}\label{R-and-Q}
 R(x):=\int_0^T \rho(t,x) \psi(t)\,dt, \qquad
 Q(x):=\psi(0) \rho_0(x) + \int_0^T \rho(t,x) \psi'(t)\, dt.
\end{equation}
Then \eqref{continuity-equation}--\eqref{continuity-equation-initial-condition} holds if and only if
\begin{equation}
\label{RQ}
\dive(R \nabla^\perp f) = Q
\end{equation}
for all such $\psi \subset C_c^\infty([0,T))$.
By Proposition~\ref{div-wsp-rhs}
for each such $\psi$ the above equation holds if and only if
for all $i\in I$ we have
\begin{equation}
\label{RQ-i}
\dive(R \nabla^\perp f_i) = \1_{\{\nabla f_i \ne 0\}} Q
\end{equation}
and
\begin{equation*}
\1_{\{\nabla f = 0\}}Q=0.
\end{equation*}
The latter equality by arbitrariness of $\psi$ is equivalent to $\d_t (\1_{\{\nabla f = 0\}}\rho)=0$.
And again by arbitrariness of $\psi$ identity \eqref{RQ-i} is equivalent to
$\d_t \rho_i + \dive (\rho_i \nabla^\perp f_i) = 0$ for all $i\in I$.
\end{proof}

\begin{prop}\label{renormalization-mono}
If $f$ is monotone and has the weak Sard property, then $\ve$ has the renormalization property
{\rm(}and uniqueness holds for bounded weak solutions to \eqref{continuity-equation}--\eqref{continuity-equation-initial-condition}{\rm)}.
\end{prop}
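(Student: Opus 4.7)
The plan is to reduce the 2D continuity equation to a family of one-dimensional transport equations along the essential level sets of $f$, where the classical 1D renormalization from Lemma~\ref{1d-renorm} applies directly. Fix a bounded weak solution $\rho$ of \eqref{continuity-equation}--\eqref{continuity-equation-initial-condition} and $\beta\in C^1(\R)$. For each $\psi \in C_c^\infty([0,T))$, define $R_\psi, Q_\psi$ by \eqref{R-and-Q}, and similarly $\tilde R_\psi, \tilde Q_\psi$ with $\rho, \rho_0$ replaced by $\beta(\rho), \beta(\rho_0)$. Testing the continuity equation in time, the weak formulation of \eqref{continuity-equation}--\eqref{continuity-equation-initial-condition} is equivalent to the stationary identity $\dive(R_\psi \nabla^\perp f) = Q_\psi$ for every $\psi$, and our goal is to derive the analogous identity $\dive(\tilde R_\psi \nabla^\perp f) = \tilde Q_\psi$ for every $\psi$.

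Apply Proposition~\ref{div-wsp-rhs} with the trivial monotone decomposition $\{f\}$: the stationary identity is equivalent to $\dive(R_\psi \nabla^\perp f) = \1_{\{\nabla f \ne 0\}} Q_\psi$ together with $Q_\psi = 0$ a.e. on $\{\nabla f = 0\}$. Running $\psi$ through a countable dense family $\{\psi_n\} \subset C_c^\infty([0,T))$, the second condition yields $\rho(t,x) = \rho_0(x)$ for a.e. $(t,x)$ with $\nabla f(x) = 0$; hence also $\beta(\rho(t,\cdot)) = \beta(\rho_0)$ a.e. on $\{\nabla f = 0\}$, so $\tilde Q_\psi$ vanishes there for every $\psi$. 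By Proposition~\ref{div-wsp-rhs-mono}, the first condition is equivalent to the ODE $(R_\psi \circ \gamma_z)' = a_z (Q_\psi \circ \gamma_z)$ on $\T$ for a.e.\ regular value $z$, where $\gamma_z$ is the canonical parametrization of $\{f=z\}^*$ and $a_z := |\gamma_z'|/(|\nabla f|\circ \gamma_z) \in L^1(\T)$. Intersecting the null sets over $\{\psi_n\}$ gives a single full-measure set of regular $z$ for which the ODE holds for every $\psi_n$, hence by density for every $\psi \in C_c^\infty([0,T))$.

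A straightforward unwinding shows that this family of ODEs, quantified over $\psi$, is precisely the weak formulation on $[0,T]\times \T$ of $\partial_t(a_z(s)\tilde\rho(t,s)) + \partial_s \tilde\rho(t,s) = 0$ with $\tilde\rho|_{t=0} = \rho_0 \circ \gamma_z$, where $\tilde\rho(t,s) := \rho(t, \gamma_z(s))$. Lemma~\ref{1d-renorm} then provides the analogous 1D equation for $\beta(\tilde\rho)$ with initial data $\beta(\rho_0 \circ \gamma_z)$, which unpacks via the same computation into $(\tilde R_\psi \circ \gamma_z)' = a_z (\tilde Q_\psi \circ \gamma_z)$ for every $\psi$ and for a.e.\ $z$. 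Running the equivalences of Propositions~\ref{div-wsp-rhs-mono} and~\ref{div-wsp-rhs} in the reverse direction, combined with the already-established $\tilde Q_\psi|_{\{\nabla f = 0\}} = 0$, yields $\dive(\tilde R_\psi \nabla^\perp f) = \tilde Q_\psi$ for every $\psi$, which is exactly the renormalization property for $\ve$.

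Uniqueness follows from the renormalization property by the standard argument: given two bounded solutions $\rho_1, \rho_2$ with the same initial data, apply renormalization with $\beta(s) = s^2$ to the difference and test against a spatial cutoff $\phi$ equal to $1$ on a neighborhood of the support of $\ve$; since $\nabla\phi$ vanishes where $\ve \ne 0$, the integral $\int (\rho_1 - \rho_2)^2 \phi\,dx$ is constant in $t$ and hence identically zero, so $\rho_1 = \rho_2$ on the support of $\ve$, and outside that support the equation collapses to $\partial_t(\rho_1 - \rho_2) = 0$. The main technical obstacle is the coupling between the $\psi$ and $z$ quantifiers needed to assemble a single 1D continuity equation on each curve $\gamma_z$ from a family of ODEs indexed by $\psi$; this is handled cleanly by the countable-dense argument. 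Everything else is packaging via the equivalences already encoded in Propositions~\ref{div-wsp-rhs}--\ref{div-wsp-rhs-mono} and Lemma~\ref{1d-renorm}.
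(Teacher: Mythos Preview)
Your proof is correct and follows essentially the same approach as the paper: reduce to the stationary equation \eqref{RQ} via \eqref{R-and-Q}, use the weak Sard property to split off the set $\{\nabla f=0\}$, apply Proposition~\ref{div-wsp-rhs-mono} together with a countable dense family of test functions $\psi$ to obtain the 1D equation on each essential level curve, invoke Lemma~\ref{1d-renorm}, and run the equivalences backwards. The only cosmetic differences are that you invoke Proposition~\ref{div-wsp-rhs} with the trivial decomposition $\{f\}$ where the paper cites Proposition~\ref{reduction-of-CE-to-mono} (these coincide when $f$ is monotone), and you spell out explicitly both the step $\tilde Q_\psi=0$ on $\{\nabla f=0\}$ and the standard uniqueness argument, which the paper leaves implicit.
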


\begin{proof} Let $\rho \in L^\infty(0, T; L^\infty(\R^2))$ solves \eqref{continuity-equation}
with the initial condition $\rho_0$.
Let $\Psi \subset C_c^\infty([0,T))$ denote a countable set of
test functions which is dense with respect to $L^\infty$ norm.
For each $\psi \in \Psi$ let $R$ and $Q$ be defined by \eqref{R-and-Q}.
Then by definition of weak solution we have \eqref{RQ}.
Note, that by item (2) of Proposition~\ref{reduction-of-CE-to-mono} we have $Q = 0$ a.e. on $\{\nabla f = 0\}$.
Then by Proposition~\ref{div-wsp-rhs-mono} from \eqref{RQ} we deduce that
for a.e. ${y} \in \reg f$ we have
\begin{equation*}
(R\circ \gamma)' = a \cdot Q\circ \gamma,
\end{equation*}
where $\gamma\colon [0,1) \to \R^2$ is a canonical parametrization of $\{f = {y}\}^*$ and $a$ is given by \eqref{a-def}.

Since $\Psi$ is countable, we may assume that the above equation holds for all $\psi \in \Psi$.

Let $\tilde R:= R\circ \gamma$ and $\tilde Q := Q\circ \gamma$.
Then for all $1$-periodic smooth test functions $\fhi$ we have
\begin{equation*}
\int_0^1 (\tilde R(s) \fhi'(s) \,ds + a(s)\tilde Q(s) \fhi(s)) \, ds = 0.
\end{equation*}
Denote $\tilde \rho(t,s):= \rho(t, \gamma(s))$. Then the last equality can be rewritten in the form
\begin{equation*}
\int_0^1\tilde \rho_0(s) a(s) \psi(0) \fhi(s)\,ds+\int_0^1 \left(\int \tilde \rho(t,s)\psi(t) \fhi'(s)  + \tilde \rho(t,s) a(s) \psi'(t) \fhi(s)\right) \, dt\,ds = 0.
\end{equation*}
By density of $\Psi$ this means that
\begin{equation*}
\d_t (a(s) \tilde \rho) + \d_s \tilde \rho = 0
\quad\text{ and } \quad \tilde \rho|_{t=0} = \tilde \rho_0.
\end{equation*}
By Lemma~\ref{1d-renorm}, the function $\beta(\rho)$ solves
$\d_t (a(s) \beta( \tilde \rho)) + \d_s \beta(\tilde \rho)$
with the initial condition $\beta(\tilde\rho) |_{t=0} = \beta(\tilde \rho_0)$.
Then by Proposition~\ref{div-wsp-rhs-mono} we conclude,
that for any $\psi \in \Psi$ the functions
\begin{equation*}
 R_\beta(x):=\int_0^T \beta(\rho(t,x)) \psi(t)\,dt, \qquad
 Q_\beta(x):=\psi(0) \beta(\rho_0(x)) + \int_0^T \beta(\rho(t,x)) \psi'(t)\, dt
\end{equation*}
solve $\dive (R_\beta \nabla^\perp f) = Q_\beta$.
By arbitrariness of $\psi$, this implies that
$\beta(\rho)$ solves \eqref{continuity-equation}
with the initial condition $\beta(\rho_0)$.
Thus, $\ve = \nabla^\perp f$ has the renormalization property.
\end{proof}

\begin{proof}[Proof of Theorem~\ref{uniqueness-criterea} {\rm(}sufficiency of {\rm(}i{\rm)} for {\rm(}ii{\rm)} and {\rm(}iii{\rm)}{\rm)}.]
Let $\beta\in C^1(\R)$. Without loss of generality we may assume that 
\begin{equation}
\label{beta00}
\beta(0)=0
\end{equation}
(since shift by a constant is not essential in the proof of the renormalization property). 
Now suppose that 
$\rho \in L^\infty((0,T)\times \R^2)$ solves \eqref{continuity-equation}--\eqref{continuity-equation-initial-condition}.
By Proposition~\ref{reduction-of-CE-to-mono} we have
\begin{equation*}
\d_t (\1_{\{\nabla f =0\}} \rho) = 0
\end{equation*}
and for all $i\in I$
the function $\rho_i = \1_{\{\nabla f_i \ne 0\}} \rho$ solves
\begin{equation*}
\d_t (\rho_i) + \dive (\rho_i \nabla^\perp f_i) = 0.
\end{equation*}
Then
\begin{equation*}
\d_t (\1_{\{\nabla f = 0\}}\beta(\rho)) = 0
\end{equation*}
and, since $f_i$ are monotone and have the weak Sard property,
by Proposition~\ref{renormalization-mono} we have
\begin{equation*}
\d_t (\beta(\rho_i)) + \dive (\beta(\rho_i) \nabla^\perp f_i) = 0
\end{equation*}
and $\beta(\rho_i)|_{t=0} = \beta(\rho_0)\1_{\{\nabla f_i \ne 0\}}$.
Note, that by construction and by~\eqref{beta00} we have
\begin{equation*}
\beta(\rho_i) = \beta(\rho) \1_{\{\nabla f_i \ne 0\}}.
\end{equation*}
Then again, using Proposition~\ref{reduction-of-CE-to-mono},
we conclude that $\beta(\rho)$
solves the continuity equation with the initial condition $\beta(\rho_0)$.
Hence, $\ve$ has the renormalization property. Consequently, the~uniqueness of bounded weak solutions
to \eqref{continuity-equation}--\eqref{continuity-equation-initial-condition} holds as well.

\end{proof}

In order to apply the sufficiency part of Theorem~\ref{uniqueness-criterea}, recall one of the strongest versions of the Morse--Sard theorem in the plane proved
by J.~Bourgain, M.~Korobkov and J.~Kristensen \cite{BKK13}.
\begin{theorem}[\cite{BKK13}]\label{MS-plane}
Let $f\in \BV_2(\R^2)$, i.e., the second distributional derivative of $f$ is a Radon measure. Then $f$ has the strict Sard property, i.e., the range of the $f$-critical set\footnotemark{}
has $1$-measure zero.
\end{theorem}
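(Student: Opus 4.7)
The plan is to combine a Lusin-type $C^2$ approximation of $f$ with a quantitative oscillation estimate at critical points that uses the fact that the second distributional derivative $D^2 f$ is a finite Radon measure.

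The first step is to obtain a Lusin-type approximation in the $\BV_2$ setting: for every $\eps>0$ there exist $g\in C^2(\R^2)$ and a closed set $E\subset\R^2$ with $\L^2(\R^2\setminus E)<\eps$ and $|D^2f|(\R^2\setminus E)<\eps$, such that $f=g$ and $\nabla f=\nabla g$ pointwise on $E$. Such an approximation can be produced by truncating the maximal function of $|D^2 f|$ and invoking Whitney's extension theorem, generalizing the classical Lusin approximation for $C^k$ functions. Applying the classical Morse--Sard theorem to the $C^2$ map $g$ yields $\H^1\bigl(g(Z_g)\bigr)=0$, and consequently $\H^1\bigl(f(Z_f\cap E)\bigr)=0$, since every $x\in Z_f\cap E$ is also a critical point of $g$.

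The second step handles the remainder $Z_f\setminus E$. For $x\in Z_f$ at which $\nabla f$ has a Lebesgue value equal to $0$, the Poincar\'e inequality applied to $\nabla f\in\BV(\R^2)$ gives a Taylor-type oscillation bound
\begin{equation*}
\operatorname{osc}_{B_r(x)} f \;\le\; C\,|D^2 f|(B_{2r}(x))
\end{equation*}
for all sufficiently small $r>0$. A Vitali-type covering of $Z_f\setminus E$ by such balls, combined with this bound, yields $\H^1\bigl(f(Z_f\setminus E)\bigr) \le C\,|D^2 f|(V_\eps)$ for a small neighborhood $V_\eps$ of $\R^2\setminus E$; letting $\eps\to 0$ then gives the conclusion. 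The $\H^1$-negligible set of non-Lebesgue points of $\nabla f$ (which contains all points where $f$ fails to be classically differentiable) is absorbed into the exceptional set.

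The main obstacle is the Taylor-type oscillation estimate above: although morally it is a standard Taylor expansion at a critical point, it is delicate when $D^2 f$ is only a measure and may carry a non-trivial singular part. In the Bourgain--Korobkov--Kristensen argument this is handled through a plane-specific analysis of the level curves of $f$ together with Gauss--Green integration on sets of finite perimeter, exploiting that in $\R^2$ the level sets of Sobolev and BV functions admit a rich structural description (in the spirit of Section~\ref{sec:fine-pr}). Two-dimensionality is essential here, as the result genuinely fails at the analogous regularity threshold in higher dimensions.
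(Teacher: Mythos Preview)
The paper does not contain a proof of this theorem: it is stated with a citation to \cite{BKK13} and used as a black box, so there is no ``paper's own proof'' to compare your proposal against.

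On the merits of your sketch: the overall architecture --- Lusin-type $C^2$ approximation, classical Sard on the good part, and an oscillation/covering argument on the remainder --- is indeed the standard route to Morse--Sard theorems at borderline regularity, and is broadly in line with \cite{BKK13}. But as written there are real gaps. First, the displayed oscillation bound is not what Poincar\'e gives you: Poincar\'e controls integral averages of $f - f(x)$ or of $\nabla f$, not the pointwise oscillation on a ball, and turning that into an estimate on $\H^1\bigl(f(Z_f\cap B_r(x))\bigr)$ is precisely the hard step. You acknowledge this yourself in the last paragraph and defer to the plane-specific level-set analysis of \cite{BKK13}, which means the proposal is an outline rather than a proof. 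Second, your handling of the ``bad'' points $Z_3$ (those where $\nabla f$ has no Lebesgue or half-plane value, cf.\ the footnote) is inadequate: saying that this set is $\H^1$-negligible in the domain and can be ``absorbed'' does not show that its \emph{image} under $f$ has $\H^1$-measure zero --- a Sobolev or $\BV_2$ function need not map $\H^1$-null sets to $\L^1$-null sets without further argument. This part of the critical set requires its own treatment.
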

Note, that for the locally Lipschitz functions $f\in\BV_2(\R^2)$ the strong Sard property was proved in \cite{PZ06}. An~independent proof of the weak Sard property for this case was given also in~\cite[Theorem 8.4]{BG16}).
See the recent survey \cite{FKK24} for different extensions of this result to multidimensional cases, etc.

\footnotetext{The critical set $Z_f=Z_1\cup Z_2\cup Z_3$ for $BV_2$ functions consists of three parts:
$Z_1=\{x\in\R^2:\nabla f(x)$ is well-defined and $\nabla f(x)=0\}$; \ $Z_2=\{x\in\R^2:$ both <<half-plane>> values of $\nabla f$ are well-defined and at least one of them is zero$\}$; \
$Z_3$ consists of <<bad points>>, where even half-plane values of $\nabla f$ are not well-defined, see \cite{BKK13} for details.}

In the two-dimensional setting
Theorems~\ref{uniqueness-criterea} and \ref{MS-plane} provide an independent proof
of the well-known result of L.~Ambrosio \cite{Ambrosio2004}:

\begin{cor}
Let $\ve\in \BV(\R^2)$ be a compactly supported divergence-free vector field.
Then $\ve$ has the renormalization property RP$_\infty$ and uniqueness holds for bounded weak solutions to \eqref{continuity-equation}--\eqref{continuity-equation-initial-condition}.
\end{cor}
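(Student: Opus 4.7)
The plan is to reduce the corollary to Theorem~\ref{uniqueness-criterea} via Theorem~\ref{MS-plane}. First, since $\ve \in \BV(\R^2)$ is compactly supported, the Sobolev embedding $\BV(\R^2)\hookrightarrow L^{2}(\R^2)$ (see, e.g., \cite[Theorem~3.47]{AFP}) gives $\ve \in L^{2}(\R^2)$. Because $\dive \ve = 0$ and $\ve$ has compact support, standard arguments yield a compactly supported stream function $f\colon \R^{2}\to\R$ with $\ve = \nabla^{\perp}f$, and since $\nabla f = -\ve^{\perp}\in L^{2}(\R^{2})$ we have $f \in W^{1,2}(\R^{2})$. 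Moreover $\nabla f \in \BV(\R^{2})$, so the second distributional derivative of $f$ is a Radon measure, i.e.\ $f\in \BV_{2}(\R^{2})$.

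Next I would invoke Theorem~\ref{MS-plane}: $f$ has the strict Sard property, i.e.\ $\L^{1}(f(Z_{f}))=0$ for $Z_{f}=Z_{1}\cup Z_{2}\cup Z_{3}$. The heart of the argument is to deduce from this the weak Sard property in the sense of Definition~\ref{WSP-old}. Let $Z\subset\R^{2}$ be the critical set used in that definition, namely the union of the non-differentiability set of $f$ with $\{\nabla f=0\}$. I claim that
\begin{equation*}
\L^{2}(Z\setminus Z_{f})=0.
\end{equation*}
Indeed, $\{\nabla f=0\}\subset Z_{1}\subset Z_{f}$, and a point of classical non-differentiability of $f$ either has no well-defined half-plane values of $\nabla f$ (hence lies in $Z_{3}$) or has two distinct half-plane values (hence lies on the jump set $J_{\nabla f}$ of the $\BV$ vector $\nabla f$). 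Standard fine properties of $\BV$ functions give that $J_{\nabla f}$ is $\H^{1}$-rectifiable and in particular $\L^{2}$-negligible; off $J_{\nabla f}$ the precise representative of $\nabla f$ is approximately continuous, so that $f$ is (approximately, hence classically after fixing the precise representative) differentiable there. Consequently
\begin{equation*}
f_{\#}(\1_{Z\cap E^{*}}\L^{2})=f_{\#}(\1_{Z_{f}\cap E^{*}}\L^{2})
\end{equation*}
is concentrated on $f(Z_{f})$, which has $\L^{1}$-measure zero. Therefore $f$ has the weak Sard property.

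Finally, since $p=2\ge 2$ and $f\in W^{1,2}(\R^{2})$ is a stream function of $\ve$, the implication $(i)\Rightarrow(ii)\wedge(iii)$ of Theorem~\ref{uniqueness-criterea} gives both $(\mathrm{RP}_{\infty})$ for $\ve$ and the uniqueness of bounded weak solutions to \eqref{continuity-equation}--\eqref{continuity-equation-initial-condition}. The only real obstacle is the step identifying the WSP critical set $Z$ with the BKK critical set $Z_{f}$ modulo $\L^{2}$-null sets; once this inclusion is in hand, the strict Sard property of \cite{BKK13} upgrades mechanically to the weak Sard property and the result follows.
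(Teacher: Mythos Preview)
This is precisely the paper's route: from $\ve\in\BV(\R^2)$ one obtains a compactly supported stream function $f\in W^{1,2}(\R^2)\cap\BV_2(\R^2)$, Theorem~\ref{MS-plane} furnishes the strict Sard property $\L^1(f(Z_f))=0$, this is upgraded to the weak Sard property, and the implication $(i)\Rightarrow(ii)\wedge(iii)$ of Theorem~\ref{uniqueness-criterea} finishes the proof. Your dichotomy for the non-differentiability points of $f$ omits one case---a point at which both half-plane values of $\nabla f$ exist and \emph{coincide} may still fail to be a point of classical differentiability of $f$, and such a point lies in neither $Z_3$ nor $J_{\nabla f}$---but this does not affect the conclusion: the pushforward $f_\#(\1_{Z\cap E^*}\L^2)$ depends only on the $\L^2$-equivalence class of $Z$, and for $f\in\BV_2(\R^2)$ the set of non-differentiability is $\L^2$-null (this is part of the fine differentiability theory underlying \cite{BKK13}), so in fact $Z=\{\nabla f=0\}\subset Z_1\subset Z_f$ modulo $\L^2$ and the argument goes through without the case analysis.
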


\begin{rem}
The results on the sufficiency of the Sard's property allow for further amplifications and refinements, e.g., in Appendix~\ref{uni-uni} we 
show that if $\ve$ is bounded and the stream function of $\ve$ has the weak Sard property, then the~uniqueness of weak solutions holds not only in the class of bounded solutions, but also in a slightly wider class $L^\infty(0,T; L^1(\R^2))$.
\end{rem}

\subsection{Proof of Theorem~\ref{uniqueness-criterea}: necessity of the weak Sard property.}
\label{un-sect-3}

Before turning to the necessity part of Theorem~\ref{uniqueness-criterea},
let us recall that a family of Borel measures $\{\mu_y\}_{y\in \R}$ on $\R^d$ is Borel
if for any Borel set $A\subset \R^d$ the function $y\mapsto \mu_y(A)$ is Borel.

\begin{theorem}[{{Disintegration theorem; see, e.g., \cite[Example~10.4.11]{bogachev}}}]
Let $\nu$ be a finite Borel measure on $\R^d$
and let $f\colon \R^d \to \R$ be a Borel function.
Let $\mu$ be a Borel measure on $\R$ such that $f_\# \nu \ll \mu$.
Then there exists a Borel family $\{\nu_y\}_{y\in \R}$
such that
$\nu_y$ is concentrated on $\{f=y\}$ for $\mu$-a.e. $y$ and
\begin{equation*}
\nu = \int_{\R} \nu_y \, d\mu(y)
\end{equation*}
in the sense that $\nu(A) = \int_{\R} \nu_y(A)\, d\mu(y)$
for all Borel sets $A\subset \R^d$.
If $\{\tilde \nu_y\}_{y\in \R}$ is another such family, then $\tilde \nu_y = \nu_y$ for $\mu$-a.e. $y$.
\end{theorem}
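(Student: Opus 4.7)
The plan is to follow the standard Radon--Nikodym plus countable-algebra construction. First I would fix a countable algebra $\mathscr A = \{A_n\}_{n\in\N}$ of Borel subsets of $\R^d$ that generates the Borel $\sigma$-algebra (for example, dyadic boxes with rational corners together with their finite Boolean combinations). For each $n$, define a finite signed (in fact positive) Borel measure on $\R$ by
\begin{equation*}
\lambda_n(B) := \nu(A_n \cap f^{-1}(B)), \qquad B \in \mathscr B(\R).
\end{equation*}
Since $\lambda_n(B)\le \nu(f^{-1}(B)) = (f_\#\nu)(B)$, the hypothesis $f_\# \nu \ll \mu$ implies $\lambda_n \ll \mu$. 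By the Radon--Nikodym theorem there exists a Borel density $g_n := d\lambda_n / d\mu$.

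The heart of the argument is to extract from the collection $\{g_n\}$ a single Borel family of measures. Because the $g_n$'s are only defined up to $\mu$-null sets, I would choose specific Borel representatives and then discard a $\mu$-null set $N \subset \R$ off of which the set-function $A_n \mapsto g_n(y)$ is finitely additive, non-negative, bounded by the total mass of $\nu$, and continuous from above at $\emptyset$ along decreasing sequences in $\mathscr A$. These countably many almost-sure identities (finite additivity on pairs, etc.) hold $\mu$-a.e. because each is an equality of Radon--Nikodym derivatives of equal measures. For $y\notin N$, Carathéodory's extension theorem produces a unique Borel measure $\nu_y$ on $\R^d$ with $\nu_y(A_n) = g_n(y)$ for all $n$; for $y\in N$ I set $\nu_y := 0$. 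Borel measurability of $y\mapsto \nu_y(A)$ for a general Borel $A$ then follows by a monotone class / $\pi$-$\lambda$ argument: the family of $A$ for which this holds is a monotone class containing $\mathscr A$.

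Next I would verify the two required properties. For concentration, fix $y\notin N$ and consider the measure $\nu_y$ of a neighbourhood $U$ of $\R\setminus\{y\}$ written as a countable union of basic sets in $\mathscr A$ disjoint from $f^{-1}(\{y\})$; a direct computation using the definition of the $g_n$ via $\lambda_n$ shows $\int_E \nu_y(f^{-1}(E\setminus\{y\}))\, d\mu(y) = 0$ for every Borel $E$, hence $\nu_y$ is concentrated on $\{f=y\}$ for $\mu$-a.e.\ $y$. For the integral representation, the identity
\begin{equation*}
\nu(A_n) = \int_{\R} g_n(y)\, d\mu(y) = \int_{\R} \nu_y(A_n)\, d\mu(y)
\end{equation*}
holds on $\mathscr A$ by construction (after integrating with $\1_{\R}$ in the definition of $\lambda_n$ and using $f_\#\nu\ll\mu$), and extends to all Borel $A$ by monotone convergence and the monotone class argument.

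Uniqueness is then straightforward: if $\{\tilde\nu_y\}$ is another such family, then for every $A\in\mathscr A$ and every Borel $B\subset\R$ we obtain
\begin{equation*}
\int_B \nu_y(A)\, d\mu(y) = \nu(A\cap f^{-1}(B)) = \int_B \tilde\nu_y(A)\, d\mu(y),
\end{equation*}
so $\nu_y(A) = \tilde\nu_y(A)$ for $\mu$-a.e.\ $y$; taking a common exceptional null set over the countable algebra $\mathscr A$ gives $\nu_y = \tilde\nu_y$ as Borel measures for $\mu$-a.e.\ $y$. The main obstacle is the bookkeeping in the second paragraph: one must arrange a \emph{single} $\mu$-null set $N$ outside of which countably many a.e.\ identities (finite additivity, non-negativity, $\sigma$-continuity at $\emptyset$) simultaneously hold, so that Carathéodory extension can be invoked pointwise; the rest is essentially soft measure theory.
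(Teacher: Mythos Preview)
The paper does not prove this theorem; it is quoted as a classical result with a reference to Bogachev's monograph, so there is no in-paper argument to compare against. Your sketch is the standard Radon--Nikodym construction and is essentially correct, but one step is a genuine gap rather than mere bookkeeping.

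You list $\sigma$-continuity at $\emptyset$ among ``countably many almost-sure identities'', but a countable algebra admits \emph{uncountably} many decreasing sequences with empty intersection, so a single exceptional $\mu$-null set cannot be obtained just by intersecting over all such sequences. This is precisely where the Polish structure of $\R^d$ enters: since $\nu$ is tight, for each $A_n\in\mathscr A$ and each $j\in\N$ one may fix a compact $K_{n,j}\subset A_n$ with $\nu(A_n\setminus K_{n,j})<2^{-j}$; enlarging $\mathscr A$ to the (still countable) algebra generated by the $A_n$ and the $K_{n,j}$, a Borel--Cantelli argument yields, off a single $\mu$-null set, $g(y,A_n)=\sup_j g(y,K_{n,j})$ for every $n$. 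For such $y$ and any $A_{n_k}\downarrow\emptyset$ in $\mathscr A$, choose $j_k$ with $g(y,A_{n_k})-g(y,K_{n_k,j_k})<\eps\,2^{-k}$; the compacts $K_{n_k,j_k}$ have empty intersection, hence some finite subfamily does, and finite additivity then forces $g(y,A_{n_M})<\eps$. Without this tightness step (or an equivalent compactness device) the pointwise Carath\'eodory extension is unjustified; on a general measurable space disintegrations of this form can fail to exist, so the omission is not cosmetic.
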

The family $\{\nu_y\}$ given by the theorem above is called \emph{the disintegration of $\nu$ with respect to $f$}.

\begin{proof}[Proof of Theorem~\ref{uniqueness-criterea} {\rm(}necessity of {\rm(}i{\rm)} for {\rm(}ii{\rm)} and {\rm(}iii{\rm)}{\rm)}.]

\ {\sc Step 1.} We start with a reduction to the case of monotone stream function.
Let $f = \sum_{i\in I} f_i$ be a decomposition of $f$ into monotone $W^{1,2}$ functions given by Theorem~\ref{thm:BT}.
Suppose that $f$ does not have the weak Sard property.
Then by Proposition~\ref{WSP-for-monotone-components} there exists $i\in I$
such that $g:= f_i$ does not have the weak Sard property. Put $h := f - g$.
By Proposition~\ref{mutual weak Sard property}, there exists a negligible set $N_1 \subset \R$
such that $\{\nabla h \ne 0\} \subset g^{-1}(N_1)$ modulo $\L^2$.
Let $N_2 \subset \R$ be a negligible set given by Corollary~\ref{E-T-is-measurable} for the function $g$, and 
let $V \subset \reg g \setminus (N_1 \cup N_2)$ be a $\sigma$-compact set with negligible complement
(such set $V$ exists by the inner regularity of the Lebesgue measure).
By Corollary~\ref{E-T-is-measurable}, the set
\begin{equation}
\label{def-E}
E:= \bigcup_{y\in V} \{g=y\}^*
\end{equation}
is measurable. By the choice of $V$ and Proposition~\ref{mutual weak Sard property} we also have
\begin{equation}
\{\nabla h\ne 0\} \cap E = \emptyset \mod \L^2.
\label{v-hat-zero-on-E}
\end{equation}

Suppose that $\rho \in L^\infty((0,T)\times \R^2)$ vanishes outside of $E$, i.e., for a.e. $t\in (0,T)$ we have
\begin{equation*}
\{\rho(t, \cdot) \ne 0\} \subset E.
\end{equation*}
Let $\mathbf b$ be a Borel vector field such that $\mathbf b = \nabla^\perp g$ a.e.
By \eqref{v-hat-zero-on-E} we have $\rho \nabla^\perp f = \rho \mathbf b$ a.e.
Then $\rho$ solves \eqref{continuity-equation} if and only if
\begin{equation}
\d_t \rho + \dive (\rho \mathbf b) = 0. \label{CE-component}
\end{equation}
Thus, it is sufficient to construct a nontrivial solution $\rho$ of \eqref{CE-component} which vanishes outside of $E$.
This solution can be constructed in the same way as in the proof of \cite[Theorem 3.10]{ABC_2014},
which was stated for the bounded vector fields.
In the remaining part of the present proof we reproduce the essential steps from \cite[Theorem 3.10, Step 2]{ABC_2014}
and show that in our setting we do not need to assume that $\mathbf b$ is bounded.

\medskip
{\sc Step 2.} Now we foliate \eqref{CE-component} into a family of equations on the level sets.
Let $N_3\subset \R$ be a negligible Borel set on which the singular part of $g_\# (\L^2 \rest E)$ is concentrated.
Replacing $V$ with $V\setminus N_3$, if necessary, we may assume that $g_\# (\L^2 \rest E) \ll \L^1$.
By the disintegration theorem, there exists a measurable family of Borel measures $\{\nu_y\}_{y\in V}$
such that for a.e. $y$ the measure $\nu_y$ is concentrated on $\{g=y\}$ and
\begin{equation}\label{L2-at-b=0}
\L^2 \rest (E \cap \{\mathbf b=0\}) = \int_V \nu_y \, dy.
\end{equation}

On the other hand, by the Coarea formula
\begin{equation}
\L^2 \rest (E \cap \{\mathbf b\ne 0\}) = \int_V \frac{1}{|\mathbf b|} \H^1 \rest \{g=y\}\, dy
\label{disintegration-of-Lebesgue-measure-on-E}
\end{equation}
(recall that for a.e. $y\in V$ we have $\{g=y\}^* = \{g=y\}$).
Thus the disintegration of $\L^2 \rest E$ with respect to $g$ has the form
\begin{equation}\label{disint-1}
\L^2 \rest E = \int_V \mu_y \, dy, \qquad \text{where} \qquad
\mu_y = \nu_y + \frac{1}{|\mathbf b|} \H^1 \rest \{g=y\} \qquad \text{for a.e. $y\in V$}.
\end{equation}
Since $\nu_y$ is concentrated on $\{\mathbf b = 0\}$ and the inequality $\mathbf b \ne 0$ holds $\H^1$-a.e. on $\{g=y\}$,
we have
\begin{equation}
\nu_y \perp \H^1 \rest \{g=y\}\qquad\mbox{ for a.e. }y.
\label{nu-y-perp-H1}
\end{equation}

Let us write the distributional formulation of \eqref{CE-component} with the test function of the form
$\Phi(t,x) = \psi(g(x_1, x_2)) \fhi(t, x_1,x_2)$:
\begin{equation*}
\int_V \psi(y) \int_0^T \int_{\{g=y\}} (\d_t \fhi + \mathbf b \cdot \nabla \fhi) \rho \, d\mu_y \, dt \, dy = 0.
\end{equation*}
Hence (by arbitrariness of $\psi$) $\rho$ solves \eqref{CE-component} if and only if for a.e. $y$
\begin{equation}
\d_t (\rho \mu_y) + \dive (\rho \mathbf b \mu_y) = 0
\label{CE-on-the-level-set}
\end{equation}
(in the sense of distributions). So far, we have proved the following important analog of Proposition~\ref{div-wsp-rhs-mono}:

\underline{\bf Foliation claim}. {\sl Suppose that $\rho \in L^\infty((0,T)\times \R^2)$ vanishes outside of $E$ defined by~\eqref{def-E}. Then~$\rho$ is a~solution to \eqref{CE-component} iff $\rho$ is a~solution to~\eqref{CE-on-the-level-set} for a.e. $y\in V$. }

\medskip
{\sc Step 3.}
By Step 1 the function $g$ does not have the weak Sard property, hence the measure in both sides of \eqref{L2-at-b=0} {\it is nontrivial}. 
In other words, there exists a Borel set $H \subset V$ with positive measure such that $\nu_y\ne0$ for any $y \in H$. 
For any such $y$ let us construct a nontrivial solution $\rho=\rho_y$ to \eqref{CE-on-the-level-set} with zero initial condition.

Let $\gamma \colon [0, L) \to \R^2$ be an injective absolutely continuous parametrization of $\{g=y\}^*$
with $\gamma'\ne 0$ a.e., which satisfies \eqref{good-parametrization-f}, i.e., 
\begin{equation}
\label{good-parametrization}
\gamma'(s) = \nabla^\perp g(\gamma(s)) \qquad\text{for a.e. } s\in[0,L).
\end{equation}
For convenience, we denote $\L := \L^1\rest[0,L)$.
Then by the area formula
\begin{equation*}
\gamma_\# \left(\L\right) =
\gamma_\# \left(\frac{|\gamma'|}{|\mathbf b(\gamma)|}\L\right) =
\frac{1}{|\mathbf b|} \H^1 \rest \{g=y\}.
\end{equation*}
Let $\lambda_y$ be a Borel measure on $[0,L)$ such that $\gamma_\# \lambda_y = \nu_y$.
Then
\begin{equation}
\gamma_\# \left(\L + \lambda_y\right) = \mu_y.
\label{mu-y-preimage}
\end{equation}
By \eqref{nu-y-perp-H1} we have
\begin{equation}
\lambda_y \perp \L.
\label{lambda-y-perp-L1}
\end{equation}

For any function $\psi=\psi(t,x)$ let us denote $\widetilde{\psi}(t,s) := \psi(t, \gamma(s))$.
We consider $\gamma$ and $\widetilde{\psi}$ as $L$-periodic functions with respect to the variable $s$.
If~$\widetilde{\rho}$ solves
\begin{equation}
\d_t \bigl(\widetilde{\rho} \cdot (1 + \lambda_y)\bigr) + \d_s \widetilde{\rho} = 0
\label{CE-one-the-level-set-param}
\end{equation}
in the sense of distributions, then $\rho$ solves \eqref{CE-on-the-level-set}.
Indeed, writing the distributional formulation of \eqref{CE-one-the-level-set-param} with the test function of the
form $\Phi(t,s) = \fhi(t, \gamma(s))$ and using \eqref{good-parametrization} we have
\begin{equation*}
\int\int \widetilde{\d_t \fhi} \widetilde{\rho} \, d(\L+ \lambda_y) \, dt
+ \int\int \widetilde{\mathbf b} \cdot \widetilde{(\nabla_x \fhi)} \tilde \rho \, d\L \, dt.
\end{equation*}
Since $\nu_y$ is concentrated on $\{\mathbf b=0\}$, $\lambda_y$ is concentrated on $\{\widetilde{\mathbf b} = 0\}$.
Therefore, the equality above can be written as
\begin{equation*}
\int \int \left(\widetilde{(\d_t \fhi)} + \widetilde{\mathbf b} \cdot \widetilde{\nabla_x \fhi})\right) \tilde \rho
\, d (\L + \lambda_y) \, dt = 0.
\end{equation*}
Changing the variables in the inner integral by \eqref{mu-y-preimage}, we obtain the distributional formulation of \eqref{CE-on-the-level-set}.

It was proved in \cite[Lemma 4.5]{ABC_2014} that for any nontrivial measure $\lambda_y$,
which is singular with respect to $\L$, the equation \eqref{CE-one-the-level-set-param}
has a nontrivial bounded Borel solution $\theta_y$ with zero initial data.
Since $\gamma$ is a homeomorphism between a circle of length $L$ and the essential level set $\{g=y\}^*$,
the function
\begin{equation*}
\rho_y(t,x) :=
\begin{cases}
\theta_y(t, \gamma^{-1}(x)), & x\in \{g=y\}^*, \\
0 & \text{otherwise}
\end{cases}
\end{equation*}
is Borel and, furthermore, $\widetilde{\rho_y} = \theta_y$
solves \eqref{CE-one-the-level-set-param}.
Then, by the previous paragraph, $\rho_y$ solves \eqref{CE-on-the-level-set}.

We have proved that there exists a Borel set $H \subset V$ with positive measure
such that for any $y \in H$ there exists a~nontrivial bounded Borel solution $\rho_y$ of \eqref{CE-on-the-level-set} with zero initial data.

\medskip
{\sc Step 4.} Now our goal is to <<glue together>> the solutions $\rho_y$ constructed in the previous step in such a way that the resulting function $\rho$ is measurable.

By Lusin's theorem there exists a compact $K \subset H$ with positive measure such that the function $y \mapsto \mu_y(\R^2)$ is continuous on $K$, hence there exists a constant $C>0$ such that $\mu_y(\R^2) \le C$ for all $y\in K$. Furthermore, the compact $K$ can be chosen in such a way that
\begin{equation}
	|\mathbf b| \in L^1(\mu_y)
\end{equation}
for \emph{all} $y\in K$. Indeed, by the coarea formula
\begin{equation*}
	\int_\R \int_{\R^2} |\mathbf b| \, d\mu_y \, dy = \int_{\R^2} |\mathbf b| \, dx \le C \|\mathbf b\|_2 < \infty,
\end{equation*}
since $\mathbf b$ has compact support. Hence $\mathbf b\in L^1(\mu_y)$ for a.e. $y$,
and it remains to remove from $K$ an open set with small measure.

Let us fix some natural number $m$.  Denote by $\mathscr{M}_m$ the set of all Borel measures $\eta$ on the <<spacetime>> $[0,T) \times \R^2$ such that 
\begin{equation}
	|\eta|(\R^2) \le mCT.\label{mu-bound}
\end{equation}
We endow $\mathscr{M}_m$ with the weak-star topology.
This topology is metrizable, since $\mathscr{M}_m$ is a norm-bounded subset of the~dual to the space of bounded continuous functions on $[0,T)\times \R^2$. Moreover, $\mathscr{M}_m$ endowed by this metric is a~complete separable metric space, i.e., it is {\it a~Polish space}. 
Furthermore,  we consider $X_m:= K \times \mathscr{M}_m$ as a product of metric spaces, i.e., as a~complete metric space as well. 
 
For each $y \in K$ denote
\begin{equation}\label{def-zeta}
	\zeta_y := \L^1 \rest [0,T) \otimes \mu_y,
\end{equation}
where $\otimes$ denotes the product of measures. It is easy to see that $\zeta_y \in \mathscr{M}_m$.
Following \cite{ABC_2014}, let us prove that the set
\begin{equation*}
	X^*_m:=\{(y,\eta) \in X_m : |\eta| \le m \zeta_y \}
\end{equation*}
is a Borel subset of $X_m$.
Indeed, $(y,\eta) \in X^*_m$ if and only if for each $\fhi \in C_c([0,T)\times \R^2)$ we have $\Lambda_\fhi(y, \eta) \le 0$, where
\begin{equation*}
	\Lambda_\fhi(y, \eta) := \int \fhi \, d\eta - m \int |\fhi| \, d\zeta_y.
\end{equation*}
The first term is a continuous function of $\eta$ (by the definition of the topology on $\mathscr{M}_m$), and the second term is a Borel function of $y$, since the family $[0,1] \mapsto \mu_y$ is Borel.
Hence for each $\fhi \in C_c([0,T)\times \R^2)$ the functional $\Lambda_\fhi$ is Borel on $X_m$.
Let $D$ denote some countable dense set in $C_c([0,T)\times \R^2)$.
Then clearly $X^*_m = \{(y,\eta) \in X_m : \Lambda_\fhi(y,\eta) \le 0 \ \forall \fhi \in D\}$.
Hence the set $X^*_m$ is Borel.

Let $\mathscr{C}_m \subset X^*_m$ be the set of all pairs $(y, \eta)$ with $y\in K$ and $\eta \in \mathscr{M}_m$ such that $\eta = \rho \cdot \zeta_y$,
where $\rho$ is a Borel solution of \eqref{CE-on-the-level-set} with zero initial data
satisfying $|\rho| \le m$. 
We also denote
\begin{equation*}
	\mathscr{C}_m^* := \{(y, \eta)\in \mathscr{C}_m : \eta \ne 0\}.
\end{equation*}

If $\mathbf b$ was bounded, then we could use the following results from \cite{ABC_2014}:

\emph{Proposition A (\cite[Proposition 7.1]{ABC_2014}).}
The set $\mathscr{C}_m$ is a Borel subset of $X^*_m$.

\emph{Proposition B (\cite[Corollary 7.2]{ABC_2014}).}
Let $G_m$ denote the projection of $\mathscr{C}_m^*$ on $K$, i.e. the image of $\mathscr{C}_m^*$ under the map $\pi(y,\eta) = y$.
Then $G_m$ is Lebesgue measurable and
there exist
a Borel set $G_m' \subset G_m$ and
a Borel family $\{\eta_y : y\in G_m'\}\subset \mathscr{M}_m$
such that
$\L^1(G_m \setminus G_m') = 0$
and
$(y, \eta_y)\in \mathscr{C}_m^*$
for all $y\in G_m'$.

We claim that the above results holds in our setting, even though $\mathbf b$ may be unbounded.
Indeed, in the proof of Proposition A
relied on the fact that for any $\fhi\in C_c^\infty([0,T);\R^2)$
for any bounded Borel vector field $\mathbf w$ the functional
$\Lambda_{\mathbf w, \fhi}\colon X_m \to \R$ given by
\begin{equation*}
	\Lambda_{\mathbf w, \fhi}(y,\eta) = \int (\d_t \fhi + \mathbf w \cdot \nabla_x \fhi) \, d\eta
\end{equation*}
is Borel. 
In our setting $\mathbf w = \mathbf b$ may be unbounded, but $\mathbf b\in L^1(\mu_y)$ for all $y\in K$.
For any $n\in \N$ let
\begin{equation*}
	\mathbf b_n := \begin{cases}
		\mathbf b, & \text{if } |\mathbf b| \le n\\
		0, & \text{otherwise}.
	\end{cases}
\end{equation*}
Then $\mathbf b_n \to \mathbf b$ pointwise as $n\to \infty$.
By our definitions for every~$(y,\eta) \in X^*_m$ we have $|\eta| \le m \zeta_y$,
hence by the dominated convergence theorem
$\Lambda_{\mathbf b_n, \fhi}(y,\eta) \to \Lambda_{\mathbf b, \fhi}(y,\eta)$ as $n\to\infty$ .
Thus $\Lambda_{\mathbf b}$ is still Borel on $X^*_m$, being a pointwise limit of Borel functionals.
This shows that Proposition A holds in our setting.
Consequently, Proposition~B also holds in our setting, since it depends on $\mathbf b$
through Proposition A only (indeed, Proposition~B is a simple consequence of Proposition~A and the classical {\it von Neumann's selection theorem}, cf. \cite[Corollary~5.5.8]{Sriv98}).

We claim that the number $m$ can be chosen in such a way that $G_m$ has a~strictly positive measure. Indeed, 
\begin{equation*}
	\bigcup_{m=1}^\infty G_m = \bigcup_{m=1}^\infty \pi(\mathscr{C}_m^*) = \pi\left(\bigcup_{m=1}^\infty \mathscr{C}_m^*\right) = K,
\end{equation*}
since for any $y\in K$ there exists a nontrivial bounded Borel solution $\rho_y$ of \eqref{CE-on-the-level-set} with zero initial data, and for any such solution there exists $m\in \N$ such that $\rho_y \in \mathscr{C}_m^*$.

The rest of the proof of Theorem~\ref{uniqueness-criterea} directly follows \cite[Theorem 3.10]{ABC_2014}.
Namely, we fix $m\in \N$ in such a way that $G_m$ (and, consequently, $G_m'$) has strictly positive measure.
Let $\{(y, \eta_y)\}$ be given by Proposition B.
Let $\rho_y$ be such that $\eta_y = \rho_y \zeta_y$.
Since $|\eta_y| \le m \zeta_y$,
the measure $$\eta:= \int_{G_m'} \eta_y \, dy$$ is absolutely continuous with respect to $\L^1\rest[0,T) \otimes \L^2$.
Let $\rho=\rho(t,x)$ denote the density of $\eta$ with respect to $\L^1\rest[0,T) \otimes \L^2$
(without loss of generality we may assume that $\rho$ is Borel).
By the disintegration theorem for a.e. $y\in G_m'$ and a.e. $t\in[0,T)$ we have
\begin{equation*}
	\rho(t,x) = \rho_y(t,x)
\end{equation*}
for $\mu_y$-a.e. $x\in \R^2$. 
Thus for a.e.  $y \in G_m'$ the function $\rho$
solves \eqref{CE-on-the-level-set} with zero initial data. By construction (see also \eqref{disint-1}, \eqref{def-zeta}\,) the Borel function~$\rho$ vanishes outside of~$E$, moreover, for a.e. $y\in V\setminus G_m'$ we have  $\rho(t,x) = 0$ for $\mu_y$-a.e. $x\in \R^2$. Then by Foliation Claim of Step~2 $\rho$ 
is a~solution to \eqref{CE-component}, and, consequently, to~(\ref{continuity-equation}).

For any $y \in K$ we have $\int_0^T \int_{\R^2} |\rho_y| \, d\mu_y \, dt > 0$, since $\eta_y$ is nontrivial.
Consequently,
\begin{equation*}
	\int_0^T \int_{\R^2} |\rho(t,x)| \, dx \, dt = \int_{G_m'} \int_0^T \int_{\R^2} |\rho| \, d\mu_y \, dt \, dy > 0,
\end{equation*}
since $G_m'$ has strictly positive measure.
Hence $\rho$ is nontrivial.

The solution $\rho$ constructed above clearly is not renormalized, since
if it was, it must have been trivial.
\end{proof}

\medskip

{\bf Data availability statement.} {\sl Data sharing not applicable to this article as no datasets were generated or analyzed during the current study.}

\medskip

{\bf Conflict of interest statement}. {\sl The authors declare that they have no conflict of interest.}

\medskip

\section{Acknowledgements}\label{sec:ack}
The work of N.G. was supported by the RSF project 24-21-00315.
N.G. is grateful to Stefano Bianchini and Eugeny Panov for the discussion of the present work.
The authors are also grateful to the anonymous referee for the constructive criticism of the previous version of this text, which was very useful.

\appendix
\section{Uniqueness of integrable solutions}\label{uni-uni}

As in the previous sections, here we consider a compactly supported divergence-free vector field ${\ve\colon \R^2 \to \R^2}$.
In this section we show that if $\ve$ is bounded and the corresponding stream function~$f$ has the weak Sard property, then the~uniqueness of weak solutions holds not only in the class of bounded solutions, but also in a slightly wider class $L^\infty(0,T; L^1(\R^2))$. 
First of all, since $f$ is compactly supported, we can establish the following version of Proposition~\ref{div-wsp-rhs}:

\begin{prop}\label{div-wsp-rhs-plus}
Suppose that compactly supported  $f$ has the weak Sard property and $f\in W^{1,\infty}(\R^2)$.
Then functions $R \in L^1(\R^2)$  and $Q \in L^1(\R^2)$ satisfy
\begin{equation}
\label{divergence-equation-with-rhs}
\dive(R \nabla^\perp f) = Q
\end{equation}
iff for all $i\in I$ the identities \eqref{equation-for-nonzero-components}\,--\,\eqref{equation-for-zero-component} hold.
\end{prop}

Indeed, in order to use $\psi(h)$ as a test function in the distributional formulation of\ \eqref{steady-contin-rhs}, it is sufficient to prove that the product $R |\nabla f| |\nabla(\psi \circ h)|$ is integrable, which is immediate since $|\nabla f|$ and, consequently, $|\nabla h|$ are essentially bounded.

A similar version of Proposition~\ref{div-wsp-rhs-mono} holds.
For convenience, we formulate it using suitable parametrizations from Proposition~\ref{existence-of-good-parametrization} generated by $\nabla f$:

\begin{prop}\label{div-wsp-rhs-mono-plus}
Suppose that compactly supported $f\in W^{1,\infty}(\R^2)$ is monotone and has the weak Sard property.
Then functions $R \in L^{1}(\R^2)$, $q\ge1$, and $Q \in L^1(\R^2)$ satisfy \eqref{steady-contin-rhs}
iff for a.e. $y\in \reg f$ we have
\begin{equation}
\label{div-along-curve}
(R\circ \gamma)' = Q\circ \gamma,
\end{equation}
where $\gamma\colon [0,L_y]\to \R^2$ is an absolutely continuous parametrization of $\{f=y\}^*$ satisfying 
\eqref{good-parametrization-f} and the functions $R\circ \gamma$ and $Q\circ \gamma$ belong to $L^1[0,L_y]$.
\end{prop}

The proof of the last proposition repeats verbatim the proof of Proposition~\ref{div-wsp-rhs-mono} with some evident simplifications, since now we have a~suitable parametrization satisfying~\eqref{good-parametrization-f}. Furthermore, since for the above parametrizations $a \equiv 1$ (where $a$ is given by \eqref{a-def}), it is easy to see that Lemma~\ref{1d-renorm} holds in a wider class:

\begin{lemma}\label{1d-renorm-plus}
Suppose that $T>0$, $\ell > 0$, $I = (\R/\ell\Z)$ and
$\rho \in L^\infty(0,T; L^1(I))$ solves
\begin{equation}
\label{1d-transport-plus}
\d_t \rho + \d_s \rho = 0
\end{equation}
with the initial condition $\rho|_{t=0} = \rho_0 \in L^1(I)$.
Then for any~$\beta \in C^1(\R)$ with $\beta(0)=0$ and $\beta' \in L^\infty(\R)$ the function $\beta(\rho)$ solves \eqref{1d-transport-plus} with the initial condition $\beta(\rho_0)$.
\end{lemma}

Thus we have the following version of Theorem~\ref{uniqueness-criterea}:

\begin{theorem}\label{aaa4}
Let $\ve \in L^\infty(\R^2)$ be a compactly supported divergence-free vector field
and let $f\in W^{1,\infty}(\R^2)$ be a stream function of $\ve$.
If $f$ has the weak Sard property, then uniqueness holds for \eqref{continuity-equation}--\eqref{continuity-equation-initial-condition} in the class $\rho \in L^\infty(0,T; L^1(\R^2))$.
Furthermore, any solution in this class is renormalized.
\end{theorem}

\begin{rem}
In fact, uniqueness holds in the class $L^\infty(0,T; L^q(\R^2))$ for any $q\in [1,+\infty]$.
Indeed, since $\ve$ is compactly supported, any solution $\rho\in L^\infty(0,T; L^q(\R^2))$ of \eqref{continuity-equation} with zero initial condition vanishes outside of some fixed ball in $\R^2$, and therefore belongs to $L^1(0,T; L^1(\R^2))$. Hence it vanishes by above Theorem~\ref{aaa4}.
\end{rem}

\section{Proofs of some standard technical facts}\label{appendix-connectedness-lemma}

This section contains the proof of some simple technical facts that are widely known and often used, and thus belong to a kind of mathematical folklore. However, as often happens, it is sometimes difficult to provide a simple precise reference to the formulation of these facts in classical textbooks. Therefore, for the reader's convenience, we present the corresponding accurate proofs here. These proofs are collected in the appendix, as they in no way claim to have any novelty.

We start from the proof of the connectedness Lemma~\ref{connected-open-set-with-connected-exterior}, which is essentially the same as the one given in \cite{Govc13}.

\begin{proof}[Proof of Lemma~\ref{connected-open-set-with-connected-exterior}]
	If $C$ is disconnected, then it can be written as a disjoint union of two nonempty closed sets, $C_1$ and~$C_2$.
	
	Let us define the map $\fhi\colon \R^d \to (-\pi, \pi]$ by
	\begin{equation}
		\fhi(x) = (\1_A(x) + \1_{C_2}(x) - \1_B(x))\frac{\dist(x, C_1)}{\dist(x, C_1) + \dist(x, C_2)} \cdot \pi.
	\end{equation}
	It is clear that $\fhi$ is well-defined and
	\begin{equation}
		\fhi(C_1) = \{0\}, \quad
		\fhi(A) \subset (0, \pi), \quad
		\fhi(C_2) = \{\pi\}, \quad
		\fhi(B) \subset (-\pi, 0).
	\end{equation}
	Let $S^1$ denote the unit circle on the complex plane.
	Then the map $f\colon \R^d \to S^1$ given by
	\begin{equation}
		f(x) = e^{i\fhi(x)}
	\end{equation}
	is continuous.
	Let us fix $x\in C_1$ as the basepoint of $\R^d$, and let us fix $1$ as the basepoint of $S^1$. 
	Let $f_*\colon \pi_1(\R^d, x) \to \pi_1(S^1, 1)$ denote the homomorphism between fundamental groups of the corresponding spaces, induced by $f$.
	Since $\pi_1(\R^d, x)$ is trivial, it is sufficient to prove that $f_*(\pi_1(\R^d, x))$ is nontrivial.
	
	Let us fix $y\in C_2$.
	Since $C_1 \cap C_2 = \emptyset$, there exists $r>0$ such that $B_r(x) \cap C_2 = B_r(y) \cap C_1 = \emptyset$, and furthermore $\fhi(B_r(x))\subset (-\pi/4, \pi/4)$ and $\fhi(B_r(y))\subset [-\pi, -3\pi/4] \cup [3\pi/4, \pi)$ (because $f$ is continuous).
	Since $C$ is the common boundary of $A$ and $B$, there exist points $x_A \in B_r(x)\cap A$, $x_B \in B_r(x)\cap B$, $y_A \in B_r(y)\cap A$ and $y_B \in B_r(y) \cap B$.
	Since $A$ and $B$ are connected, there exist curves $\gamma_A\colon [0,1]\to A$ and $\gamma_B \colon [0,1]\to B$ such that $\gamma_A(0) = x_A$, $\gamma_A(1)=y_A$, $\gamma_B(0) = y_B$ and $\gamma_B(1) = x_A$.
	Line segment $\theta_{AB}$ connecting $y_A$ with $y_B$ intersects $C_2$ (but not $C_1$), and similarly line segment $\theta_{BA}$ connecting $x_B$ with $x_A$ intersects $C_1$ (but not~$C_2$).
	Let $\gamma$ denote the composition of $\gamma_A$, $\theta_{AB}$, $\gamma_B$ and $\theta_{BA}$. Then it is easy to see that $f \circ\gamma$ is a loop on $S^1$ which is not homotopic (relative to $1$) to a constant path.
	Hence $f_*(\pi_1(\R^d, x))$ is nontrivial.
\end{proof}

\begin{proof}[Proof of Proposition~\ref{constancy on reduced boundary}]
	By \cite[Lemma 3.2]{MSZ_2003} there exist disjoint measurable sets $E_i \subset \R^d$
	such that $\R^d \setminus \bigsqcup_i E_i$ is negligible and for any $i\in \N$ the function $f$ is Lipschitz on $E_i$. Let $N \subset \R^d$ be a negligible Borel set such that for any $i$
	all points of $\tilde E_i := E_i \setminus N$ are density points of $E_i.$
	
	By Coarea (for BV)
	\begin{equation*}
		\int |D \1_{\{f>t\}}|(N) \, dt = |Df|(N) = 0
	\end{equation*}
	hence for a.e. $t\in\R$ the measure $D \1_{\{f>t\}}$ is concentrated on $\bigsqcup_i \tilde E_i.$
	Let us fix such $t$.
	
	By Theorem~\ref{thm:de_giorgi_federer}, for any set $E \subset \R^d$ of finite perimeter the measure $D\1_E$ is concentrated on the set~$E^{1/2}$. 
	Let $x\in \{f>t\}^{1/2} \cap \tilde E_i.$
	Since $\{f>t\}$ has density $1/2$ at $x$ and $\tilde E_i$ has density $1$ at $x,$
	for any $\eps > 0$ the ball $B_\eps(x)$ contains points $y,z \in \tilde E_i$ such that $f(y) > t$
	and $f(z) \le t.$ Then by continuity of $f$ on $\tilde E_i$ we have $f(x) = t.$
	Hence for a.e. $t \in \R$ the measure $D \1_{\{f>t\}}$ is concentrated on $\bigsqcup_i \tilde E_i \cap \{f=t\}.$
	
	For any test functions $\fhi,\psi\in C_0^\infty(\R^d)$ now we can write
	\begin{equation*}
		\int \fhi(x) \psi(f(x)) d |Df|(x)
	\end{equation*}
	using the Coarea formula for BV and for Sobolev functions:
	\begin{equation*}
		\int \int \fhi(x) \psi(f(x)) d |D \1_{\{f>t\}}|(x) \, dt
		= \int \int \fhi(x) \psi(f(x)) d\H^1\rest \{f=t\}(x) \, dt
	\end{equation*}
	Since $D \1_{\{f>t\}}$ is concentrated on $\{f=t\},$ we can rewrite the equality above as
	\begin{equation*}
		\int \left(\int \fhi(x) d|D \1_{\{f>t\}}|(x)\right) \, \psi(t) dt
		= \int \left(\int \fhi(x) d\H^1\rest \{f=t\}(x)\right) \, \psi(t) \, dt
	\end{equation*}
	By arbitrariness of $\fhi$ and $\psi$ we conclude that
	\begin{equation*}
		|D \1_{\{f>t\}}| = \H^1\rest \{f=t\}
	\end{equation*}
	for a.e. $t$. By \eqref{Gauss-Green} we have $|D\1_{\{f>t\}}| = \H^1 \rest \d^M \{f>t\},$
	so \eqref{f is constant on the reduced boundary of superlevelset} follows.
\end{proof}

\begin{proof}[Proof of Proposition~\ref{gen-inner-normal}]
	By the Coarea formula
	\begin{equation*}
		0=\int_{\R^2} \1_{\{\nabla f =0\}} |\nabla f| \, dx
		=\int \H^1(\{\nabla f=0\}\cap \{f=t\}) \, dt,
	\end{equation*}
	hence $\H^1(\{\nabla f=0\}\cap \{f=t\}) = 0$ for a.e. $t$.
	
	Next, for any $x\in \R^d$ we have the following equality:
	\begin{equation}
		f(x) = \int_\R \chi_t(x) \, dt,
		\label{layer-cake}
	\end{equation}
	where
	\begin{equation}
		\chi_t(x) = \1_{\{f>t\}}(x) - \1_{(-\infty,0)}(t).
	\end{equation} 
	(If $f$ is non-negative, then simply $\chi_t(x) = \1_{[0,+\infty)}(t)\cdot\1_{\{f>t\}}(x)$.)
	
	For a.e. $t\in \R$ we have
	\begin{equation}
		\label{derivative-of-generalized-superlevel}
		D \chi_t = \bm\nu_{\{f>t\}} \H^1 \rest \{f=t\}.
	\end{equation}
	Indeed, for a.e. $t$ the set $\{f>t\}$ has finite perimeter, hence $D \chi_t = \bm\nu_{\{f>t\}} \H^1 \rest \d^* \{f>t\}$ by \eqref{Gauss-Green}. Furthermore, by Proposition~\ref{constancy on reduced boundary} for a.e. $t$ we have $\d^* \{f>t\} = \{f=t\}$ $\mod \H^1$.
	
	Let $\bm\nu\colon \R^d \to \R^d$ be a Borel vector field such that
	\begin{equation}
		\bm\nu(x)=
		\begin{cases}
			\frac{\nabla f(x)}{|\nabla f(x)|}
			&\text{for a.e. $x\in \{\nabla f \ne 0\}$;}\\
			0
			&\text{otherwise.}
		\end{cases}
	\end{equation}
	
	Let $\bm\fhi \in C^1_c(\R^d; \R^d)$. First, by the Coarea formula for Sobolev map, we have
	\begin{equation}
		-\int_{\R^2} f \dive \bm \fhi \, dx = \int_{\R^2} \bm \fhi \cdot \nabla f \, dx
		=\int_\R \int_{\{f=t\}} \bm\fhi \cdot \bm \nu \, d\H^1 \, dt
	\end{equation}
	Second, by \eqref{layer-cake}, Fubini theorem and \eqref{derivative-of-generalized-superlevel}
	we have
	\begin{equation}
		-\int_{\R^2} f \dive \bm \fhi \, dx = -\int_{\R} \int \chi_t \dive \bm \fhi \, dx \, dt
		=\int_\R \langle D \chi_t, \bm\fhi \rangle \, dt
		=\int_\R \int_{\{f=t\}} \bm\fhi \cdot \bm \nu_{\{f>t\}} \, d\H^1 \, dt.
	\end{equation}
	Hence
	\begin{equation}
		\label{grad-vs-generalized-inner-normal}
		\int_\R \int_{\{f=t\}} \bm\fhi \cdot \bm \nu \, d\H^1 \, dt
		=\int_\R \int_{\{f=t\}} \bm\fhi \cdot \bm \nu_{\{f>t\}} \, d\H^1 \, dt
	\end{equation}
	Passing to the (pointwise) limit, by dominated convergence we deduce that
	\eqref{grad-vs-generalized-inner-normal} holds for any bounded Borel vector field $\bm \fhi\colon \R^d \to \R^d$. In particular, after substituting $\bm \fhi = \bm \nu$ we obtain
	\begin{equation}
		\int_\R \int_{\{f=t\}} (|\bm \nu|^2 - \bm \nu \cdot \bm \nu_{\{f>t\}}) \, d\H^1 \, dt = 0. \label{coarea-for-normals}
	\end{equation}
	For a.e. $t$ and for $\H^1$-a.e. $x \in \{f=t\}$ we have 
	$
	|\bm \nu(x)| = |\bm \nu_{\{f>t\}}(x)| = 1.
	$
	Hence we can rewrite \eqref{coarea-for-normals} as 
	\begin{equation}
		\int_\R \int_{\{f=t\}} (|\bm \nu| \cdot |\bm \nu_{\{f>t\}}| - \bm \nu \cdot \bm \nu_{\{f>t\}}) \, d\H^1 \, dt = 0. \label{coarea-for-normals-2}
	\end{equation}
	
	By Cauchy--Bunyakovsky inequality the integrand in \eqref{coarea-for-normals-2} is non-negative. Since the integral \eqref{coarea-for-normals-2} vanishes, the integrand vanishes as well:
	for a.e.~$t$ we have $\bm \nu(x) =\bm \nu_{\{f>t\}}(x)$ for $\H^1$-a.e. $x\in \{f=t\}$.
\end{proof}

\begin{proof}[Proof of Lemma~\ref{Jordan-interior}]
	\emph{Necessity.}
	Let us prove that $\d^M I_\gamma = \Gamma \mod \H^1$.
	Since $I_\gamma$ and $E_\gamma$ are open, it is clear that $I_\gamma \subset I_\gamma^1$ and $E_\gamma \subset I_\gamma^0$, hence
	\begin{equation}
		\d^M I_\gamma \subset \Gamma.
	\end{equation}
	On the other hand, by \cite[Lemma~4]{ACMM_2001} we have $\H^1(\Gamma) = P(I_\gamma)$,
	while $P(I_\gamma) = \H^1(\d^M I_\gamma)$ by~\eqref{Gauss-Green}.
	Hence $\H^1(\Gamma \setminus \d^M I_\gamma) = \H^1(\Gamma) - \H^1(I_\gamma) = 0$ and therefore \eqref{essential-boundary-is-curve} holds.
	
	\emph{Sufficiency.}
	For any $x\in \R^2 \setminus \gamma([0,1])$ there exists $r>0$ such that $B_r(x) \subset I_\gamma$.
	By the relative isoperimetric inequality (see e.g. \cite[Eq. (3.43)]{AFP}) there exists a constant $C>0$ such that
	\begin{equation}
		\min(\L^2(B_r(x) \cap E), \L^2(B_r(x) \setminus E)) \le C \H^1(\d^M E \cap B_r(x))^2.
	\end{equation}
	By \eqref{essential-boundary-is-curve} the right-hand side is zero, since $B_r(x) \cap \gamma([0,1]) = \emptyset$. So either $B_r(x) \subset E \mod \L^2$, or $B_r(x) \subset E^c \mod \L^2$. In the first case we set $g(x):=1$, and in the second case we set $g(x):=0$.
	
	We claim that $g$ is constant on $I_\gamma$ (and on $E_\gamma$).
	Indeed, $\{g=1\}$ and $\{g=0\}$ are open subsets of $\R^2 \setminus \gamma([0,1])$.
	Let $x, y\in I_\gamma$ and let $\theta \colon[0,1] \to I_\gamma$
	be a continuous curve such that $\theta(0) = x$ and $\theta(1) = y$.
	Covering $\theta([0,1])$ by finitely many open balls, lying in $I_\gamma$,
	we note that on the adjacent balls $g$ takes the same value. Hence $g(x) = g(y)$.
	The same argument shows that $g$ is constant on $E_\gamma$.
	
	Suppose that $g=1$ on $E_\gamma$. Then $E_\gamma \subset E \mod \L^2$, which is impossible since $\L^2(E) < \infty$. Thus $g=0$ on $E_\gamma$ (recall that on $\R^2 \setminus \gamma([0,1])$ the function $g$ takes only the values 0 and 1).
	If $g=0$ on $I_\gamma$, then the set of all density points of $E$ is negligible, and hence $E$ is negligible, which is not possible by the assumptions. Thus $g=1$ on $I_\gamma$.
	Therefore $E = I_\gamma \mod \L^2$, and hence $\d^M E = \d^M I_\gamma = \Gamma \mod \H^1$, as we have already proved in the previous part (necessity).
\end{proof}

\begin{proof}[Proof of Proposition~\ref{generalized-inner-normal-for-simple-set}]	
	Let $\gamma$ be a Lipschitz parametrization of $\d^* E$ given by Theorem~\ref{boundary-of-simple-plane-sets}.
	(Recall that by Theorem~\ref{thm:de_giorgi_federer} the sets $\d^M E$ and $\d^* E$ are equal up to $\H^1$-negligible subsets.)
	
	On the one hand, by De Giorgi's Theorem \ref{thm:de_giorgi}, for $\mathscr H^1$-a.e. $x \in \d^* E$ we have 
	\begin{equation}\label{eq:tan_de_giorgi}
		\Tan(\d^* E, x) = \mathsf{span} (\nu^\perp_E(x)) 
	\end{equation}
	where $\nu_E(x)$ is the generalized inner normal to $E$.
	
	On the other hand, since $\d^* E = \gamma([0,1]) \mod \H^1$  by Proposition \ref{prop:tang_smooth} for $\H^1$-a.e. $x\in \d^* E$ we have
	\begin{equation}\label{eq:tan_non_de_giorgi}
		\Tan(\gamma([0,1]), x) = \mathsf{span}(\gamma'(\gamma^{-1}(x))). 
	\end{equation}
	Since the approximate tangent space is a one dimensional vector space and since $\nu_E(x)$ is unit vector for $\H^1$-almost every $x \in \d^*E$, equalities \eqref{eq:tan_de_giorgi} and \eqref{eq:tan_non_de_giorgi} force that for $\H^1$-a.e. $x \in \d^*E$
	\begin{equation}\label{eq:equality_x}
		\nu^\perp_E(x) = \sigma(x) \frac{\gamma'(\gamma^{-1}(x))} {|\gamma'(\gamma^{-1}(x)), |} \qquad \text{ for }\sigma(x) \in \{\pm 1\}. 
	\end{equation}
	Then for the vector measure $\bm \mu_\gamma$ induced by $\gamma$ we have
	\begin{equation}
		\dive(\sigma \bm \mu_\gamma) = \dive(\nu^\perp_E \H^1 \rest \gamma([0,1]))=0.
	\end{equation}
	Then by Proposition~\ref{steady continuity equation along curve}, we conclude that $\sigma$ is constant $\H^1$-a.e. on $\gamma$. Consequently,
	\begin{equation*}
		\exists \bar\sigma \in \{\pm 1\}: \qquad  \nu_E^\perp(\gamma(s)) = \bar \sigma \cdot \frac{\gamma'(s)}{|\gamma'(s)|} \qquad \text{ for } \L^1\text{-a.e. } s \in [0,1].
	\end{equation*}
	Reversing the parametrization of $\gamma$, if necessary, one can achieve that $\bar \sigma = 1$. Finally, taking the corresponding natural reparametrization of~$\gamma$, we can get  $|\gamma'(s)| = \H^1(\d^* E)$ for a.e. $s\in [0,1]$.
\end{proof}

\begin{proof}[Proof of Proposition~\ref{derivative of indicator of a set of finite perimeter}]
By isoperimetric inequality (see e.g. \cite[Theorem 3.46]{AFP}) without loss of generality we may assume that $\meas{E} < \infty.$

Let $\{E_i\}_{i\in I}$ be the family of all $M$-connected components of $E.$
By Decomposition theorem, Proposition~\ref{indec-prop} and \eqref{Gauss-Green}
\begin{equation*}
\1_E = \sum_{i} \1_{E_i}, \qquad |D \1_E| = \sum_{i} |D \1_{E_i}|.
\end{equation*}
Consequently, $D \1_E = \sum_{i} \1_{E_i}.$

Let us fix some $i \in I.$ The set $F:=E_i$ is indecomposable.
Let $\{Y_{j}\}_{j\in J}$ denote the family of its holes.
Then by Proposition~\ref{saturation-and-holes}
\begin{equation*}
\1_F = \1_{\sat(F)} - \sum_{j} \1_{Y_j}, \qquad |D \1_F| = |D \1_{\sat(F)}| + \sum_{j} |D \1_{Y_j}|.
\end{equation*}
Consequently, $D\1_F = D\1_{\sat(F)} - \sum_{j} D\1_{Y_j}.$
It remains to note that the sets $\sat(F)$ and $Y_j$ are simple by Proposition~\ref{holes-of-indecomposable-set}.
Hence Proposition~\ref{generalized-inner-normal-for-simple-set} can be applied.
\end{proof}

\begin{proof}[Proof of Proposition~\ref{existence-of-good-parametrization}]

We need two classical elegant facts from one-dimensional Real Analysis: 

\begin{lemma}[see, e.g., Theorem~6.7 of Chapter~VII in~\cite{Saks}]\label{raf-1}
{\sl A~continuous monotone function $h:[0,1]\to\R$ is absolutely continuous iff it has the~Luzin $N$-property, i.e., $\L^1(h(E))=0$ whenever $\L^1(E)=0$ for $E\subset[0,1]$.} 
\end{lemma}

\begin{lemma}[see, e.g., Theorem~6.5 of Chapter~VII in~\cite{Saks}]\label{raf-2}
{\sl Let $G\subset\R$ and a function $h:G\to\R$ is differentiable (in classical sense) on $G$. Then $h$ has the~Luzin $N$-property on~$G$, i.e., $\L^1(h(E))=0$ whenever $\L^1(E)=0$ with $E\subset G$.} 
\end{lemma}

Now let $f\in W^{1,1}(\R^2)$ be a~monotone function. Without loss of generality we may assume that $f\ge 0$. Fix arbitrary small $\delta>0$. 
Consider the set $B=\{f>\delta\}\cap\{|\nabla f|>0\}$. Of course, $B$ is measurable and 
$$\L^2(B)<\infty.$$ 
By Coarea formula  \eqref{coarea-fhi} (applied to the measurable nonnegative function~$\varphi(x)=\frac1{|\nabla f(x)|}$) we have
$$\L^2(B)=\int\limits_\delta^\infty\int\limits_{\{f=y\}\cap B}\frac1{|\nabla f(x)|}\,d \H^{1}_x \, dy=\int\limits_\delta^\infty\int\limits_{\{f=y\}}\frac1{|\nabla f(x)|}\,d \H^{1}_x \, dy=\int\limits_\delta^\infty\int\limits_{\{f=y\}^*}\frac1{|\nabla f(x)|}\,d \H^{1}_x \, dy$$
(the first equality follows from~\eqref{gradient does not vanish}, and the second equality follows from Definition~\ref{def-reg} of regular values and Proposition~\ref{reg-vals}). By arbitrariness of~$\delta>0$ this implies that 
\begin{equation}
\label{good-p01}
\frac1{|\nabla f|}\in \L^1\bigl(\H^1\rest\{f=y\}^*\bigr)\qquad\text{for a.e. } y\in \reg f.
\end{equation}
Take $y\in \reg f$, and let $\tilde\gamma:[0,1]\to\R^2$ be a~natural parametrization (up to a scaling factor) of the closed curve~$\{f=y\}^*$, that means, $|\tilde\gamma'|\equiv c_0$ a.e. on~$[0,1]$ with some constant~$c_0>0$. 
By definition of regular values and by \eqref{generalized inner normal for Sobolev function}, \eqref{generalized normal and tangent vector} we can assume without loss of generality, that
\begin{equation}
\label{good-p1}
|\nabla f(\tilde\gamma(t))|>0 \qquad\text{for a.e. } t\in[0,1],
\end{equation}
\begin{equation}
\label{good-p2}
\tilde\gamma'(t)=c_0 \frac{\nabla^\perp f(\tilde\gamma(t))}{|\nabla f(\tilde\gamma(t))|} \qquad\text{for a.e. } t\in[0,1].
\end{equation}

By \eqref{good-p01}, we can assume in addition, that  
\begin{equation}
\label{good-p3}
\frac1{|\nabla f(\tilde\gamma(\cdot))|}\in L^1([0,1]).
\end{equation}
Then the function 
$$F(t):=c_0\int\limits_0^t\frac1{|\nabla f(\tilde\gamma(\xi))|}\,d\xi$$
is strictly increasing and absolutely continuous on~$[0,1]$. Denote
$L_y=F(1)$. Then the function $h:[0,L_y]\to [0,1]$, inverse to~$F$, is well defined, continuous, and strictly increasing on the interval~$[0,L_y]$. 

We claim, that $h$ is absolutely continuous as well. Indeed, consider the ``bad'' set $$Z=\bigl\{t\in [0,1]:F'(t)\mbox{ is not well defined or }F'(t)\ne c_0\frac1{|\nabla f(\tilde\gamma(t))|}\bigr\}.$$
By our previous assumptions, $\L^1(Z)=0$, therefore,   
\begin{equation}
\label{nul1}
\L^1(F(Z))=0
\end{equation}
as well by absolute continuity of~$F$. Denote $E=F(Z)$. By construction, 
the function~$h$ is differentiable at any $s\in (0,L_y)\setminus E$ with 
\begin{equation}
\label{nul2}
h'(s)=\frac1{F'(h(s))}=\frac1{c_0}|\nabla f(\tilde\gamma(h(s)))|\ne0,
\end{equation}
where the~right-hand side is well defined. 
Then by Lemma~\ref{raf-2} the restriction~$h|_{[0,L_y]\setminus E}$ has the~Luzin $N$-property. 
But $\L^1(h(E))=\L^1(Z)=0$, hence, $h$ has the~$N$-property on the~whole interval~$[0,L_y]$. Then Lemma~\ref{raf-1} implies the required claim on absolute continuity of~$h$. 

Finally, for $s\in [0,L_y]$ put
$$\gamma(s):=\tilde\gamma(h(s)).$$
By construction (see, in particular, \eqref{good-p2}, \eqref{nul2}\,) 
$$\gamma'(s)=\tilde\gamma'(h(s))\cdot h'(s)=\nabla^\perp f(\gamma(s))$$
for almost all~$s\in[0,L_y]$. The~proposition is proved. 
\end{proof}

\bibliographystyle{alpha}

\end{document}